\documentclass[11pt]{amsart}
\usepackage[T1]{fontenc}
\usepackage[utf8]{inputenc}
\usepackage{lmodern,amsmath,amssymb,mathtools,microtype}
\usepackage[a4paper,textwidth=155mm,textheight=235mm,centering]{geometry}
\usepackage[hidelinks,unicode,hypertexnames=false]{hyperref}
\hypersetup{pdftitle={Metric Poissonian pair correlation for n log n},pdfauthor={}}
\numberwithin{equation}{section}
\newtheorem{theorem}{Theorem}[section]
\newtheorem{proposition}[theorem]{Proposition}
\newtheorem{lemma}[theorem]{Lemma}
\newtheorem{corollary}[theorem]{Corollary}
\newcommand{\ps}{\bar\Psi}
\newcommand{\ph}{\bar\Phi_\ell}
\newcommand{\ee}{\mathrm e}
\newcommand{\dd}{\mathrm d}
\newcommand{\RR}{\mathbb R}\newcommand{\ZZ}{\mathbb Z}
\newcommand{\supp}{\operatorname{supp}}\newcommand{\diag}{\operatorname{diag}}
\newcommand{\sgn}{\operatorname{sgn}}
\allowdisplaybreaks[2]
\title[Metric pair correlation for $n\log n$]{Metric Poissonian Pair Correlation Property\\ Part 2: The Case \(n\log n\)} 
\author{Bo Tan, \ \  Zhen Zhang, \ \ Qing-Long Zhou}
\date{}
\subjclass[2020]{Primary 11K06; Secondary 11L07, 11J83}
\keywords{Poissonian pair correlation, metric number theory, exponential sums, stationary phase}
\begin{document}
\raggedbottom
\begin{abstract}
We prove that $(\alpha n\log n)_{n\ge1}$ has Poissonian pair correlation for almost every real $\alpha$, solving a Problem  posed by Aistleitner, El-Baz and Munsch [Open Problem 3, Geom. Funct. Anal. 2021]. Our main contribution is a new four-point counting estimate with a fixed power saving over $N^{7/3}$ at tolerance $N^{-1}$, valid on the ranges left after a first-moment and Borel--Cantelli argument. Its proof reduces the counting condition to the required proximity of an implicitly defined real root to an integer; a Fej\'er kernel majorizes this condition, and multidimensional van der Corput $A$- and $B$-processes supply the saving. A localized Mellin argument then yields the second-moment estimate needed for the metric conclusion.
\end{abstract}
\maketitle

\section{Introduction}\label{sec:introduction}

This is the second of two papers addressing the open problems
on metric Poissonian pair correlation for the sequences
$n+\log n$ and $n\log n$ posed in
Aistleitner, El-Baz and Munsch~\cite[Section~9, Open Problems~2 and~3]{AEM}.
Our first paper~\cite{PaperI} establishes the metric Poissonian
property for $n+\log n$ and discusses the relation with uniform
distribution and the background of both problems.
The present paper completes the treatment of these two sequences
by resolving the $n\log n$ case.

Recall that a real sequence $(x_n)_{n\ge1}$ has
\emph{Poissonian pair correlation} (PPC) if
\[
 R_N(s;x):=\frac1N\#\{1\le m\ne n\le N:
       \|x_m-x_n\|_{\RR/\ZZ}\le s/N\}\longrightarrow 2s
 \qquad (N\to\infty)
\]
for every $s\ge0$, where $\|t\|_{\RR/\ZZ}$ denotes the distance
to the nearest integer. A real sequence $(a_n)_{n\ge1}$ is
\emph{metric Poissonian} if $(\alpha a_n)_{n\ge1}$ has PPC
for Lebesgue almost every $\alpha\in\RR$. 

The main theorem of this paper establishes the metric Poissonian pair correlation property for the \(n\log n\) case, thereby giving an affirmative answer to the related open problem mentioned above.

\begin{theorem}\label{thm:main}
For Lebesgue almost every $\alpha\in\RR$, simultaneously for every
$s\ge0$,
\[
 \frac1N\#\{1\le m\ne n\le N:
    \|\alpha(m\log m-n\log n)\|_{\RR/\ZZ}\le s/N\}
 \longrightarrow 2s.
\]
\end{theorem}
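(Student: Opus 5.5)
We follow the standard metric scheme of Rudnick--Sarnak and Aistleitner--Larcher--Lewko, adapted to the sequence $a_n=n\log n$.

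\textbf{Reductions.} It suffices to prove the statement for $\alpha$ in a fixed compact interval $[A,B]\subset(0,\infty)$; negative $\alpha$ follow by symmetry. Monotonicity of $R_N(s)$ in $s$ lets us restrict to rational $s$. A sandwich argument along the geometric subsequence $N_k=\lfloor (1+\varepsilon)^k\rfloor$ (or $N_k=k^{C}$) then passes from the subsequence to all $N$.

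\textbf{Smoothing and Fourier expansion.} We replace the indicator of $[-s/N,s/N]$ by smooth minorant and majorant functions $f_N$ of total mass $(2s\pm\varepsilon)/N$. We then expand
\[
R_N(f;\alpha)=\frac1N\sum_{m\ne n\le N}\sum_{j}\hat f_N(j)\,\ee(j\alpha(a_m-a_n)).
\]
The term $j=0$ gives the main term $2s$, up to $O(\varepsilon)$. Integrating the remaining terms against a smooth weight in $\alpha$ gives a first moment that is small. This uses the growth $a_m-a_n\gg |m-n|\log N$ and the rapid decay of the weight's Fourier transform.

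\textbf{Second moment.} The core is a variance bound
\[
\int (R_N(f;\alpha)-\mathbb E R_N)^2\,w(\alpha)\,\dd\alpha\ll N^{-\delta}.
\]
Expanding the square produces sums over $(m,n,m',n')$ and $j,j'$ with $|j|,|j'|\le N^{1+\varepsilon}$. After integrating in $\alpha$, these are controlled by counting quadruples with
\[
|j(a_m-a_n)-j'(a_{m'}-a_{n'})|\lesssim 1 .
\]
The trivial count gives $N^{7/3}$-type bounds on some ranges, which is exactly borderline. So we need the four-point counting estimate announced in the abstract, giving a power saving over $N^{7/3}$ at tolerance $N^{-1}$. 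The easy ranges are handled first. Where $|m-n|$ or $|j|$ is very small or very large, the first moment combined with Chebyshev and Borel--Cantelli on $N_k$ disposes of them.

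\textbf{Main obstacle: the four-point estimate.} On the remaining ranges, we fix $m,n,j,j'$ and one of $m',n'$, say $n'$. The condition then forces the real solution $t$ of
\[
t\log t=a_{n'}+\frac{j}{j'}(a_m-a_n)
\]
to lie within about $N^{-1}/(j'\log N)$ of an integer. We majorize this proximity condition by a Fejér kernel. This yields exponential sums $\sum_h c_h\,\ee(h\,t(m,n,j,j',n'))$ with an implicitly defined phase.

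We would then apply van der Corput: an $A$-process (Weyl differencing) in one of the variables, followed by a $B$-process (Poisson summation and stationary phase) in two variables. Power saving requires nondegeneracy of the Hessian of $t$ in those variables. This we check using implicit differentiation of $t\log t$, where the logarithm breaks the homogeneity that makes the $n^2$-type analogue degenerate. We expect the Hessian computation and the control of exceptional sets where it degenerates to be the hardest part.

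Finally, a localized Mellin argument converts the smooth-weight sums back to the sharp-range count. Combining the variance bound $\ll N^{-\delta}$ with Chebyshev and Borel--Cantelli on $N_k$ gives almost sure convergence. The sandwich step then completes the proof of Theorem~\ref{thm:main}.
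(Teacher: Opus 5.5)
\textbf{The failing step.} You reduce the variance to counting $(j,j',m,n,m',n')$ with $|j(a_m-a_n)-j'(a_{m'}-a_{n'})|\le C$, taken with absolute values. After squaring the Fourier expansion the normalization is $N^{-4}$, so you need this count to be $O(N^{4-\delta})$. That is impossible. On the range that survives your removals (say $j\asymp N$ and $n\asymp h\asymp N$) there are $\asymp N^3$ values $jG_h(n)$, all in an interval of length $O(N^2\log N)$. Cauchy--Schwarz over unit intervals then forces the count to be $\gg N^4/\log N$. So this route gives at best $\int_I|E_N|^2\ll1/\log N$, which is not summable along any subsequence with bounded ratio $N_{k+1}/N_k$. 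No exponential-sum estimate can repair this, because the count really is that large. The cancellation that makes the variance small lies in the $j$-sum, and taking absolute values discards it. Relatedly, ``$N^{7/3}$ is borderline'' is not a statement about your sextuple count, whose threshold would be $N^{4-\delta}$. Also, $|jG-j'G'|\le C$ only gives $|m'-t|\ll1/(j'\log N)$, not $N^{-1}/(j'\log N)$.

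\textbf{The missing idea} is to decouple $j$ from the gap \emph{before} counting. This is the role of the localized Mellin transform in the paper, not a conversion from smooth to sharp ranges. Write $\psi_M(x/(JHL))\ee(\alpha x)$ as a Mellin integral, use $(jG_h(n))^{it}=j^{it}G_h(n)^{it}$, truncate to $t\asymp JHL$, and apply Parseval in $\alpha$ and in $\log x$. This gives
\[
 \int_I|F_{P,H,J}|^2\,d\alpha\ll\frac1{JHL}\sup_{c_IJHL\le t\le C_IJHL}|D_J(t)|^2\int_{c_IJHL}^{C_IJHL}|Z_{P,H}(t)|^2\,dt+O(N^{-A}).
\]
The third-derivative test supplies genuine cancellation in $D_J(t)=\sum_j\widehat g(j/N)j^{it}$, namely $J^{1/2}t^{1/6}+Jt^{-1/6}$ after Abel summation. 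The mean square of $Z_{P,H}$ is $\ll HL(J+NL)\,\mathcal E_{P,H}(N^{-1})$, a count of pairs of \emph{gaps only} at tolerance $N^{-1}$. Only this yields $\int_I|E_N|^2\ll N^{-7/3}L^{O(1)}\max_{P,H}\mathcal E_{P,H}(N^{-1})$ and makes $N^{7/3}$ the threshold.

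The count is then done in three variables $(d,n,h)$ through the root $\kappa$ of $G_{h-d}(\kappa)=G_h(n)$, using a Fej\'er kernel of length $N^{2/3+\varepsilon}$. The paper applies a three-dimensional $B$-process first, then differencing, then a second $B$-process. For $a$-dominant shifts that second step is three-dimensional with a third-derivative test; for the remaining shifts it is two-dimensional or near-constant-Hessian with a cubic average. Your $A$-then-$B$ plan in five variables has no exponent bookkeeping and is aimed at the wrong quantity.

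Two smaller points. Markov plus Borel--Cantelli only removes \emph{nonnegative} contributions, such as pairs with small $n$ or $h$, not oscillating Fourier modes with small $|j|$. And $N_k=k^C$ does not make $\sum_k(\log N_k)^{-4}$ converge; your geometric choice or the paper's $N_k=\lfloor e^{\sqrt k}\rfloor$ does.
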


We first apply a first-moment estimate and the Borel--Cantelli lemma to show that the contribution of pairs $(n,n+h)$ with $n\le N/(\log N)^4$ or $h\le N/(\log N)^4$ tends to zero for almost every $\alpha$ along a suitable subsequence. For the remaining pairs, a localized Mellin argument bounds the second moment in terms of four-point counts on dyadic ranges, counting pairs of differences of $(n\log n)_{n\ge1}$ whose absolute difference is at most $N^{-1}$. A general benchmark for these counts is provided by Bloom's bound for the additive energy of convex sets~\cite[Theorem~2]{Bloom}, which, after rescaling and discretization, yields an upper bound $O_\varepsilon(N^{123/50+\varepsilon})$ for each such count. In our framework, the second-moment reduction gives, up to logarithmic factors, $N^{-7/3}$ times the largest of these counts; see \eqref{eq:metric-moment}. To obtain power decay through this reduction, we therefore require a sharper counting bound with a fixed exponent strictly below $7/3$.

The main contribution of this paper is a bound of the form $O(N^{7/3-\sigma})$, for some absolute $\sigma>0$, for these four-point counts, uniformly over the dyadic ranges remaining after the preliminary removal; see Proposition~\ref{prop:energy}. This supplies the power saving required by the second-moment argument. The proof exploits the specific structure of the differences of $n\log n$. We use the necessary condition that an implicitly defined real root lie close to an integer, and majorize this condition by a Fej\'er kernel. The resulting exponential sums are estimated by multidimensional van der Corput $A$- and $B$-processes, together with a third-derivative estimate, to obtain the stated counting bound. A further technical ingredient is the choice of a common subsequence along which both the first-moment and second-moment estimates are summable and whose consecutive terms have ratio tending to one. This allows the preliminary removal and the counting estimate to be combined: we restore the discarded pairs along this subsequence and then use monotonicity to obtain the pair correlation limit for all $N$.

Section~\ref{sec:metric} treats smoothing, small pairs and the
Fourier expansion. Section~\ref{sec:moment-proof} gives the
localized Mellin reduction and deduces
Theorem~\ref{thm:main} from the four-point bound.
Sections~\ref{sec:first}--\ref{sec:remaining-shifts} prove that bound.
The appendices contain the root identities, derivative calculations,
and stationary-phase estimates.

Write $L=\log N$, let $f(x)=x\log x$ for $x>0$, and set
$G_h(n)=f(n+h)-f(n)$.
The notation $n\asymp P$ means that $n/P$ lies in a fixed compact
subinterval of $(0,\infty)$; other ranges are interpreted in the same way.
We write $\ee(t)=\exp(2\pi it)$ and, for $t\in\mathbb R$,
$\|t\|=\|t\|_{\mathbb R/\mathbb Z}=\min_{n\in\mathbb Z}|t-n|$.
For a scalar, $|\cdot|$ denotes absolute value or complex modulus.
For $u=(u_1,\ldots,u_m)\in\mathbb R^m$, put
\[
 |u|=\left(\sum_{i=1}^m u_i^2\right)^{1/2},\qquad
 |u|_\infty=\max_{1\le i\le m}|u_i|.
\]
For a matrix $A$, use the induced operator norm
\[
 \|A\|=\sup_{|u|=1}|Au|.
\]
For an $r$-linear form $T$, set
$\|T\|=\sup_{|u_1|,\ldots,|u_r|\le1}|T[u_1,\ldots,u_r]|$.
The notation $\partial^rG(x)$ denotes the $r$th total derivative,
viewed as a symmetric $r$-linear form when $G$ is scalar-valued.
For a multi-index $\alpha$, $|\alpha|=\sum_i\alpha_i$.
For a smooth scalar- or vector-valued function $G$ on a set
$U\subset\mathbb R^m$, put
\[
 \|G\|_{L^\infty(U)}=\sup_{u\in U}|G(u)|,\qquad
 \|G\|_{C^r(U)}=
 \max_{|\alpha|\le r}\|\partial^\alpha G\|_{L^\infty(U)}.
\]
For matrix- or multilinear-form-valued functions, use the corresponding
operator norm inside the supremum. On a closed set $U$, derivatives
are taken from a smooth extension of $G$ to an open neighborhood of $U$.
We also write $\|G\|_{L^1(U)}=\int_U|G(u)|\,du$ and
$\operatorname{dist}(E,F)=\inf_{x\in E,\,y\in F}|x-y|$
for subsets $E,F\subset\mathbb R^m$.
The notation $U\ll_LV$ means $|U|\le CL^BV$ for fixed constants
$C,B$; $U\asymp_LV$ means both inequalities. These constants may
depend on the fixed smooth test and cutoff functions introduced
below and on the indicated derivative orders. Every $o(1)$ below is uniform in the allowed
parameters. We take $\varepsilon=10^{-4}$ throughout.
The signature of a real nonsingular symmetric matrix is the number
of positive eigenvalues minus the number of negative ones.

\section{Reduction of the pair correlation statistic}
\label{sec:metric}

\subsection{Smoothing and small pairs}

Let $g\in C_c^\infty(\mathbb R)$ be even and nonnegative. Write
\[
 \widehat g(\xi)=\int_{\mathbb R}g(x)\ee(-x\xi)\,dx,\qquad
 R_N(g;\alpha)=\frac1N\sum_{1\le m\ne n\le N}\sum_{\ell\in\mathbb Z}
 g\bigl(N(\alpha(f(m)-f(n))+\ell)\bigr).
\]
We abbreviate the interval statistic for $(\alpha f(n))$ as
$R_N(s;\alpha)$. Evenness of $g$, with $\ell$ replaced by $-\ell$
for the reversed pair, gives
\begin{equation}\label{eq:metric-positive-gaps}
 R_N(g;\alpha)=\frac2N
 \sum_{\substack{n,h\ge1\\n+h\le N}}\sum_{\ell\in\mathbb Z}
 g\bigl(N(\alpha G_h(n)+\ell)\bigr).
\end{equation}
We work on a fixed compact interval $I=[a,b]\subset(0,\infty)$.
A countable cover and the identity $R_N(g;-\alpha)=R_N(g;\alpha)$
will recover almost every real $\alpha$.

Remove the pairs
\[
 \mathcal B_N=\{(n,h)\in\mathbb Z_{\ge1}^2:n+h\le N,\ 
              n\le N/L^4\ \text{or}\ h\le N/L^4\}.
\]
Let $\mathcal G_N$ be the remaining pairs, and let
$R_{N,\mathcal B_N}(g;\alpha)$ denote the contribution of
$\mathcal B_N$ to \eqref{eq:metric-positive-gaps}. Then
\[
 \#\mathcal B_N\le 2N\lfloor N/L^4\rfloor\le 2N^2L^{-4}.
\]
For a fixed pair,
$G_h(n)=\int_n^{n+h}(1+\log x)\,dx\ge h\ge1$. Periodicity and
nonnegativity give
\[
 \int_0^1\sum_{\ell\in\mathbb Z}g(N(t+\ell))\,dt
 =\frac1N\int_{\mathbb R}g(x)\,dx.
\]
After the change of variable $t=\alpha G_h(n)$, cover
$[aG_h(n),bG_h(n)]$ by at most $|I|G_h(n)+2$ unit intervals.
It follows that
\[
 \begin{split}
 \int_I\sum_{\ell\in\mathbb Z}g\bigl(N(\alpha G_h(n)+\ell)\bigr)\,d\alpha
 &=\frac1{G_h(n)}\int_{aG_h(n)}^{bG_h(n)}
             \sum_{\ell\in\mathbb Z}g(N(t+\ell))\,dt\\
 &\le\frac{|I|+2/G_h(n)}N\int_{\mathbb R}g
 \le\frac{|I|+2}N\int_{\mathbb R}g.
 \end{split}
\]
Thus
\begin{equation}\label{eq:metric-small-pairs}
 \int_I R_{N,\mathcal B_N}(g;\alpha)\,d\alpha
 \le\frac{4(|I|+2)}{L^4}\int_{\mathbb R}g.
\end{equation}

We use the same subsequence for this first-moment estimate and for
the second-moment estimate below. Take
$N_k=\lfloor\exp(\sqrt{k})\rfloor$ for sufficiently large $k$. It satisfies
\begin{equation}\label{eq:metric-common-subsequence}
 \frac{N_{k+1}}{N_k}\longrightarrow1,\qquad
 \sum_k(\log N_k)^{-4}<\infty,\qquad
 \sum_kN_k^{-\eta}(\log N_k)^B<\infty
 \quad(\eta>0,\ B\ \text{fixed}).
\end{equation}
For each integer $r\ge1$, Markov's inequality yields
\[
 \sum_k\bigl|\{\alpha\in I:
        R_{N_k,\mathcal B_{N_k}}(g;\alpha)>1/r\}\bigr|
 \le r\sum_k\int_I R_{N_k,\mathcal B_{N_k}}(g;\alpha)\,d\alpha<\infty.
\]
Borel--Cantelli, followed by a countable intersection over $r$, gives
\begin{equation}\label{eq:metric-small-limit}
 R_{N_k,\mathcal B_{N_k}}(g;\alpha)\longrightarrow0
 \quad\text{for almost every }\alpha\in I.
\end{equation}
We shall restore these pairs before passing from $N_k$ to all $N$.

\subsection{Fourier expansion and truncation}

Poisson summation gives the absolutely convergent Fourier series
\[
 \sum_{\ell\in\mathbb Z}g(N(t+\ell))
   =\frac1N\sum_{j\in\mathbb Z}\widehat g(j/N)\ee(jt).
\]
Substitution into the contribution of $\mathcal G_N$ gives
\begin{equation}\label{eq:metric-fourier}
 R_N(g;\alpha)-R_{N,\mathcal B_N}(g;\alpha)
 =\frac2{N^2}\sum_{j\in\mathbb Z}\widehat g(j/N)
             \sum_{(n,h)\in\mathcal G_N}\ee(\alpha jG_h(n)).
\end{equation}
The zero-frequency term is
\begin{equation}\label{eq:metric-zero-mode}
 \begin{split}
 \frac{2\widehat g(0)}{N^2}\#\mathcal G_N
 &=\frac{2\widehat g(0)}{N^2}
       \left(\frac{N(N-1)}2-\#\mathcal B_N\right)\\
 &=\widehat g(0)-\frac{\widehat g(0)}N
       -\frac{2\widehat g(0)}{N^2}\#\mathcal B_N
 =\int_{\mathbb R}g+O_g(L^{-4}).
 \end{split}
\end{equation}
This error is independent of $\alpha$.

Both $g$ and $\widehat g$ are real and even. Combining positive
and negative frequencies therefore gives the coefficient $4/N^2$.
For each fixed $r>1$, rapid decay gives the uniform tail bound
\[
 \frac4{N^2}\#\mathcal G_N
       \sum_{j>N^{1+\varepsilon}}|\widehat g(j/N)|
 \ll_{g,r}N^{1-\varepsilon(r-1)}.
\]
Given $A>0$, choose $r$ large enough to make this $O_{g,A}(N^{-A})$.
Define the complex sum
\begin{equation}\label{eq:metric-error-definition}
 E_N(\alpha)=\frac4{N^2}
 \sum_{1\le j\le N^{1+\varepsilon}}\widehat g(j/N)
       \sum_{(n,h)\in\mathcal G_N}\ee(\alpha jG_h(n)).
\end{equation}
Equations \eqref{eq:metric-fourier}--\eqref{eq:metric-zero-mode} yield
\begin{equation}\label{eq:metric-centering}
 R_N(g;\alpha)
 =\int_{\mathbb R}g+R_{N,\mathcal B_N}(g;\alpha)
       +\operatorname{Re}E_N(\alpha)
       +O_g(L^{-4})+O_{g,A}(N^{-A}).
\end{equation}
It remains to prove, for some fixed $\eta>0$ and $B$,
\begin{equation}\label{eq:metric-target}
 \int_I|E_N(\alpha)|^2\,d\alpha\ll_{I,g}N^{-\eta}L^B.
\end{equation}
Together with \eqref{eq:metric-common-subsequence}, this will imply
$E_{N_k}(\alpha)\to0$ almost everywhere on the same subsequence as
\eqref{eq:metric-small-limit}.

\subsection{Dyadic blocks and the second moment}

Partition $n,h,j$ into disjoint half-open ranges
\[
 P\le n<2P,\qquad H\le h<2H,\qquad J\le j<2J,
 \qquad P,H,J\in\{1,2,4,\ldots\}.
\]
Only blocks with
\[
 N/(2L^4)<P,H\le N,\qquad 1\le J\le N^{1+\varepsilon}
\]
can contribute. Retain all the original restrictions by defining
\begin{equation}\label{eq:metric-block-definition}
 F_{P,H,J}(\alpha)=
 \sum_{\substack{J\le j<2J\\j\le N^{1+\varepsilon}}}\widehat g(j/N)
 \sum_{\substack{P\le n<2P,\ H\le h<2H\\(n,h)\in\mathcal G_N}}
          \ee(\alpha jG_h(n)).
\end{equation}
In particular, the inner sum still requires
$n,h>N/L^4$ and $n+h\le N$. We have the exact identity
\[
 E_N(\alpha)=\frac4{N^2}\sum_{P,H,J}F_{P,H,J}(\alpha).
\]
The number of blocks satisfies
\[
 \sum_{P,H,J}1
 \le(2+4\log_2L)^2\bigl(1+(1+\varepsilon)\log_2N\bigr)
 \ll L(\log L)^2.
\]
Cauchy's inequality now gives
\[
 |E_N(\alpha)|^2
 \le\frac{16}{N^4}\left(\sum_{P,H,J}1\right)
                  \sum_{P,H,J}|F_{P,H,J}(\alpha)|^2.
\]
After integration,
\begin{equation}\label{eq:metric-block-cauchy}
 \int_I|E_N(\alpha)|^2\,d\alpha
 \ll\frac{L(\log L)^2}{N^4}
            \sum_{P,H,J}\int_I|F_{P,H,J}(\alpha)|^2\,d\alpha.
\end{equation}
We next estimate one block uniformly in $P,H,J$. A bound
$\int_I|F_{P,H,J}|^2\ll_{I,g}N^{4-\eta}L^B$ would give
\eqref{eq:metric-target}, with an additional factor
$L^2(\log L)^4$ from the block count and Cauchy's inequality.

\section{From the second moment to four-point counting}
\label{sec:moment-proof}

Fix a compact interval $I=[a,b]\subset(0,\infty)$ and a block
$F_{P,H,J}$ from Section~\ref{sec:metric}. All restrictions in its
definition are retained below. In particular,
$N/(2L^4)<P,H\le N$, $1\le J\le N^{1+\varepsilon}$, and
$(n,h)\in\mathcal G_N$.

\subsection{Mellin separation and Parseval}

For sufficiently large $N$, the identity
\[
 G_h(n)=\int_n^{n+h}(1+\log x)\,dx
\]
and $n>N/L^4$, $n+h\le N$ give, for $n\le x\le n+h$,
\[
 \frac L2\le1+L-4\log L\le1+\log x\le1+L\le2L.
\]
Integrating and using $H\le h<2H$ and $J\le j<2J$ yields
\[
 \frac12HL\le G_h(n)\le4HL,
 \qquad \frac12JHL\le jG_h(n)\le8JHL.
\]
Choose a fixed function $\psi_M\in C_c^\infty((0,\infty))$ with
$0\le\psi_M\le1$, equal to one on $[1/2,8]$ and supported in
$(1/4,16)$. Put
\[
 K_J(\alpha,t)=\int_0^\infty
       \psi_M\!\left(\frac{x}{JHL}\right)
       \ee(\alpha x)x^{-it}\,\frac{dx}{x}.
\]
Mellin inversion (see Flajolet, Gourdon and
Dumas~\cite[Theorem~2(i)]{FGD}),
with this sign convention, reads
\[
 \psi_M\!\left(\frac{x}{JHL}\right)\ee(\alpha x)
       =\frac1{2\pi}\int_{\mathbb R}K_J(\alpha,t)x^{it}\,dt.
\]
Since $\psi_M(jG_h(n)/(JHL))=1$ and
$(jG_h(n))^{it}=j^{it}G_h(n)^{it}$, define
\[
 \begin{split}
 D_J(t)&=\sum_{\substack{J\le j<2J\\ j\le N^{1+\varepsilon}}}
       \widehat g(j/N)j^{it},\\
 Z_{P,H}(t)&=\sum_{\substack{P\le n<2P,\ H\le h<2H\\
                          (n,h)\in\mathcal G_N}}G_h(n)^{it}.
 \end{split}
\]
Exchanging the finite sums with the Mellin integral gives
\[
 F_{P,H,J}(\alpha)=\frac1{2\pi}\int_{\mathbb R}
       K_J(\alpha,t)D_J(t)Z_{P,H}(t)\,dt.
\]

The phase in $K_J(\alpha,t)$ is $2\pi\alpha x-t\log x$.
On the support of $x\mapsto\psi_M(x/(JHL))$, we have
$JHL/4<x<16JHL$.
Set
\[
 c_I=\frac{\pi a}{4},\qquad C_I=64\pi b.
\]
For $\alpha\in I$ and $t\notin[c_IJHL,C_IJHL]$,
\[
 |2\pi\alpha x-t|\gg_I JHL+|t|.
\]
The first integration by parts is
\[
 K_J(\alpha,t)
 =-\int_0^\infty\frac{d}{dx}
 \left[\frac{\psi_M(x/(JHL))}{i(2\pi\alpha x-t)}\right]
 \exp\bigl(i(2\pi\alpha x-t\log x)\bigr)\,dx.
\]
There is no boundary term because
$x\mapsto\psi_M(x/(JHL))$ is compactly supported in $(0,\infty)$.
To iterate, use $u=\log(x/(JHL))$. The transformed cutoff
$u\mapsto\psi_M(e^u)$ has fixed compact support in
$(\log(1/4),\log16)$ and uniformly bounded derivatives.
On the support of $u\mapsto\psi_M(e^u)$, the phase has first derivative of magnitude
$\gg_I JHL+|t|$, and its derivatives of every fixed higher order
are $O_I(JHL)$. Each integration by parts therefore gains a factor
$(JHL+|t|)^{-1}$, with constants independent of the block.
For every fixed integer $r\ge1$, uniformly for $\alpha\in I$
and $t\notin[c_IJHL,C_IJHL]$,
\[
 |K_J(\alpha,t)|\ll_{I,r}(JHL+|t|)^{-r}.
\]
For $r>1$, the bound $|D_J(t)Z_{P,H}(t)|\ll_g JPH$ gives an
omitted integral of size $O_{I,g,r}(JPH(JHL)^{1-r})$.
For $r\ge3$, the bounds $H>N/(2L^4)$, $P\le N$ and $J\ge1$ give
\[
 JPH(JHL)^{1-r}
 =PH^{2-r}J^{2-r}L^{1-r}
 \ll_r N^{3-r}L^{3r-7}.
\]
Thus any fixed integer $r>A+3$ makes the omitted integral
$O_{I,g,A}(N^{-A})$, uniformly in $P,H,J$.

For all $\alpha\in\mathbb R$, define
\[
 F_{P,H,J}^{\mathrm{loc}}(\alpha)
   =\frac1{2\pi}\int_{c_IJHL}^{C_IJHL}
       K_J(\alpha,t)D_J(t)Z_{P,H}(t)\,dt.
\]
Thus $F_{P,H,J}=F_{P,H,J}^{\mathrm{loc}}+O_{I,g,A}(N^{-A})$
uniformly on $I$. Substituting the kernel gives
\[
 F_{P,H,J}^{\mathrm{loc}}(\alpha)
 =\int_0^\infty\frac{\psi_M(x/(JHL))}{x}
     \left[\frac1{2\pi}\int_{c_IJHL}^{C_IJHL}
       D_J(t)Z_{P,H}(t)x^{-it}\,dt\right]\ee(\alpha x)\,dx.
\]
Fourier Parseval in $\alpha$ yields
\[
 \int_{\mathbb R}|F_{P,H,J}^{\mathrm{loc}}(\alpha)|^2\,d\alpha
 =\int_0^\infty\frac{|\psi_M(x/(JHL))|^2}{x^2}
     \left|\frac1{2\pi}\int_{c_IJHL}^{C_IJHL}
       D_J(t)Z_{P,H}(t)x^{-it}\,dt\right|^2dx.
\]
Parseval in $\log x$ gives
\[
 \int_0^\infty
     \left|\frac1{2\pi}\int_{c_IJHL}^{C_IJHL}
       D_J(t)Z_{P,H}(t)x^{-it}\,dt\right|^2\frac{dx}{x}
 =\frac1{2\pi}\int_{c_IJHL}^{C_IJHL}|D_J(t)Z_{P,H}(t)|^2\,dt.
\]
The bound $|\psi_M(x/(JHL))|^2/x\le4/(JHL)$ now implies
\[
 \int_{\mathbb R}|F_{P,H,J}^{\mathrm{loc}}(\alpha)|^2\,d\alpha
 \le\frac{2}{\pi JHL}
       \int_{c_IJHL}^{C_IJHL}|D_J(t)Z_{P,H}(t)|^2\,dt.
\]
The approximation on $I$ also gives
\[
 \int_I|F_{P,H,J}(\alpha)|^2\,d\alpha
 \le2\int_{\mathbb R}|F_{P,H,J}^{\mathrm{loc}}(\alpha)|^2\,d\alpha
       +O_{I,g,A}(N^{-A}),
\]
after increasing the order in the tail estimate if necessary.
Consequently,
\begin{equation}\label{eq:metric-local-mellin}
 \begin{split}
 \int_I|F_{P,H,J}(\alpha)|^2\,d\alpha
 \ll_I{}&\frac1{JHL}
       \sup_{c_IJHL\le t\le C_IJHL}|D_J(t)|^2
       \int_{c_IJHL}^{C_IJHL}|Z_{P,H}(t)|^2\,dt\\
 &+O_{I,g,A}(N^{-A}).
 \end{split}
\end{equation}

\subsection{The frequency sum and the gap sum}

On $[J,2J]$, the phase $t\log x/(2\pi)$ has third derivative
$t/(\pi x^3)\asymp t/J^3$. The third derivative test
(see Robert~\cite[Section~3.3]{Robert}) gives
\[
 \sup_{J\le u<2J}\left|\sum_{J\le j\le u}j^{it}\right|
       \ll J^{1/2}t^{1/6}+Jt^{-1/6},\qquad t>0.
\]
The rapid decay of $\widehat g$ and its derivative gives
\begin{equation}\label{eq:metric-coefficients}
 \sup_{J\le x\le2J}|\widehat g(x/N)|
       +\frac1N\int_J^{2J}|\widehat g'(x/N)|\,dx
       \ll_g(1+J/N)^{-2}.
\end{equation}
Abel summation, with upper endpoint at most
$\min(2J,N^{1+\varepsilon})$, therefore gives
\begin{equation}\label{eq:metric-frequency}
 \begin{split}
 \sup_{c_IJHL\le t\le C_IJHL}|D_J(t)|^2
 \ll_{I,g}(1+J/N)^{-4}
       \left[J(JHL)^{1/3}+J^2(JHL)^{-1/3}\right].
 \end{split}
\end{equation}

For the other factor, expand $|Z_{P,H}(t)|^2$ and integrate each
term. Since both gap values lie in $[HL/2,4HL]$,
\[
 \frac{|G_h(n)-G_y(m)|}{4HL}
 \le\left|\log\frac{G_h(n)}{G_y(m)}\right|
 \le\frac{2|G_h(n)-G_y(m)|}{HL}.
\]
It follows that
\[
 \left|\int_{c_IJHL}^{C_IJHL}
       \exp\!\left(it\log\frac{G_h(n)}{G_y(m)}\right)dt\right|
 \ll_I HL\min\left\{J,\frac1{|G_h(n)-G_y(m)|}\right\}.
\]
At zero gap difference, the minimum is interpreted as $J$. Hence
\[
 \int_{c_IJHL}^{C_IJHL}|Z_{P,H}(t)|^2\,dt
 \ll_I HL\sum_{\substack{P\le n,m<2P,\ H\le h,y<2H\\
                        (n,h),(m,y)\in\mathcal G_N}}
       \min\left\{J,\frac1{|G_h(n)-G_y(m)|}\right\}.
\]
Define the four-point count
\[
 \mathcal E_{P,H}(N^{-1})
 =\#\left\{\begin{array}{l}
 P\le n,m<2P,\quad H\le h,y<2H,\\
 |G_h(n)-G_y(m)|\le N^{-1}
 \end{array}\right\},
\]
where all variables are positive integers. No conditions
$(n,h),(m,y)\in\mathcal G_N$ are imposed in
$\mathcal E_{P,H}(N^{-1})$, so it also bounds the count with
these additional restrictions.
For $k\in\mathbb Z$, put
\[
 C_k=\#\left\{\begin{array}{l}
 (n,h)\in\mathcal G_N,\quad P\le n<2P,\quad H\le h<2H,\\
 k/N\le G_h(n)<(k+1)/N
 \end{array}\right\}.
\]
These numbers vanish unless
$\lfloor NHL/2\rfloor\le k\le\lfloor4NHL\rfloor$.
Two gap values in the same interval differ by less than $N^{-1}$.
Thus Cauchy's inequality gives, for every integer $r$,
\begin{equation}\label{eq:metric-energy-width}
 \sum_k C_kC_{k+r}
 \le\left(\sum_k C_k^2\right)^{1/2}
       \left(\sum_k C_{k+r}^2\right)^{1/2}
 =\sum_k C_k^2\le\mathcal E_{P,H}(N^{-1}).
\end{equation}
For interval indices differing by $|r|\ge2$, the gap difference
is at least $(|r|-1)/N$. The cases $r=0,\pm1$ contribute at most
$J$ per pair. Since $|r|\le4NHL$, we obtain
\[
 \begin{split}
 &\sum_{\substack{P\le n,m<2P,\ H\le h,y<2H\\
                  (n,h),(m,y)\in\mathcal G_N}}
       \min\left\{J,\frac1{|G_h(n)-G_y(m)|}\right\}\\
 &\quad\le J\sum_{r=-1}^1\sum_k C_kC_{k+r}
       +\sum_{2\le|r|\le4NHL}\min\left\{J,\frac{N}{|r|-1}\right\}
          \sum_k C_kC_{k+r}\\
 &\quad\ll\left(J+N\sum_{1\le r\le4NHL}\frac1r\right)
          \mathcal E_{P,H}(N^{-1})
 \ll(J+NL)\mathcal E_{P,H}(N^{-1}).
 \end{split}
\]
Here $H\le N$ gives $\log(2NHL)\ll L$. Therefore
\begin{equation}\label{eq:metric-gap-moment}
 \int_{c_IJHL}^{C_IJHL}|Z_{P,H}(t)|^2\,dt
       \ll_I HL(J+NL)\mathcal E_{P,H}(N^{-1}).
\end{equation}

\subsection{The required four-point estimate}

Combining \eqref{eq:metric-local-mellin},
\eqref{eq:metric-frequency}, and \eqref{eq:metric-gap-moment} gives
\[
 \begin{split}
 \int_I|F_{P,H,J}(\alpha)|^2\,d\alpha
 \ll_{I,g}{}&(1+J/N)^{-4}(1+NL/J)
       \left[J^{4/3}(HL)^{1/3}+J^{5/3}(HL)^{-1/3}\right]
       \mathcal E_{P,H}(N^{-1})\\
 &+O_{I,g,A}(N^{-A}).
 \end{split}
\]
For $r=4/3,5/3$,
\[
 (1+J/N)^{-4}(1+N/J)J^r
 =N^r\frac{(J/N)^{r-1}}{(1+J/N)^3}\le N^r.
\]
Using $1+NL/J\le L(1+N/J)$ and
$N/(2L^4)<H\le N$, we obtain
\begin{equation}\label{eq:metric-dyadic}
 \begin{split}
 \int_I|F_{P,H,J}(\alpha)|^2\,d\alpha
 &\ll_{I,g}L\left[N^{4/3}(HL)^{1/3}
                  +N^{5/3}(HL)^{-1/3}\right]
                  \mathcal E_{P,H}(N^{-1})+O_{I,g,A}(N^{-A})\\
 &\ll_{I,g}N^{5/3}L^{4/3}\mathcal E_{P,H}(N^{-1})
                  +O_{I,g,A}(N^{-A}).
 \end{split}
\end{equation}
Indeed, the two coefficients in the first line are at most
$N^{5/3}L^{4/3}$ and $2^{1/3}N^{4/3}L^2$, respectively.
The second is smaller than the first for sufficiently large $N$.
All constants are uniform in $P,H,J$.

There are $O_\varepsilon(L(\log L)^2)$ blocks.
Substituting \eqref{eq:metric-dyadic} into
\eqref{eq:metric-block-cauchy} therefore gives
\begin{equation}\label{eq:metric-moment}
 \int_I|E_N(\alpha)|^2\,d\alpha
 \ll_{I,g,\varepsilon}N^{-7/3}L^{10/3}(\log L)^4
       \max_{P,H}\mathcal E_{P,H}(N^{-1})+O_{I,g,A}(N^{-A}).
\end{equation}
Thus a fixed power saving over $N^{7/3}$ in this count is sufficient.
The required estimate is the following.

\begin{proposition}\label{prop:energy}
There is an absolute $\sigma>0$ such that, uniformly for
$N/(2L^4)\le P,H\le N$,
\begin{equation}\label{eq:energy}
 \mathcal E_{P,H}(N^{-1})\ll N^{7/3-\sigma}.
\end{equation}
The estimate also holds when the conditions $P\le n,m<2P$ and
$H\le h,y<2H$ in $\mathcal E_{P,H}(N^{-1})$ are replaced by
$P/C_1\le n,m\le C_1P$ and $H/C_1\le h,y\le C_1H$,
for any fixed $C_1\ge2$, with a constant depending on $C_1$.
\end{proposition}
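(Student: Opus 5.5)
We count quadruples $(n,h,m,y)$ with $P\le n,m<2P$, $H\le h,y<2H$ and $|G_h(n)-G_y(m)|\le N^{-1}$, where $P,H$ lie between $N/(2L^4)$ and $N$. The trivial bound is about $P^2H^2/(HL)\cdot N^{-1}\cdot(\text{local density})$. A heuristic random model gives roughly $P^2H^2\cdot N^{-1}/(HL)\approx N^{2}$ up to logarithms, so the target $N^{7/3-\sigma}$ leaves room, but a direct Bloom-type bound does not suffice.

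\textbf{Step 1: reduce to a near-integer condition.} Fix $(n,h,y)$. The map $m\mapsto G_y(m)$ is smooth and increasing, with derivative
\[
\partial_mG_y(m)=\log\bigl((m+y)/m\bigr)\asymp H/P\gg L^{-4}.
\]
So the condition $|G_h(n)-G_y(m)|\le N^{-1}$ determines a unique real root $\mu=\mu(n,h,y)$ with $G_y(\mu)=G_h(n)$. A valid integer $m$ must then satisfy $\|\mu(n,h,y)\|\ll L^4N^{-1}$, so at most $O(1)$ values of $m$ occur per triple. Hence
\[
\mathcal E_{P,H}(N^{-1})\ll\#\{(n,h,y):\ \|\mu(n,h,y)\|\le\delta\},\qquad \delta=L^4/N.
\]
We majorize this by a Fej\'er kernel of length $K=\delta^{-1}$:
\[
\mathcal E\ll\delta\sum_{|k|\le K}\Bigl(1-\frac{|k|}{K}\Bigr)\sum_{n,h,y}w(n,h,y)\,\ee\bigl(k\mu(n,h,y)\bigr),
\]
with a smooth weight $w$. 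The term $k=0$ gives $\delta PH^2\ll_L N^2$, which is acceptable. It remains to show that
\[
S_k=\sum_{n,h,y}w\,\ee\bigl(k\mu(n,h,y)\bigr)
\]
is at most $N^{10/3-\sigma}$ on average over $1\le|k|\le K\approx N$.

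\textbf{Step 2: derivatives of $\mu$.} Implicit differentiation of $G_y(\mu)=G_h(n)$ gives explicit formulas for $\partial\mu$ in terms of $\log$'s (the root identities of the appendices). In the variables $(n,h,y)$, $\mu$ is homogeneous of degree one: $\mu(\lambda n,\lambda h,\lambda y)$ differs from $\lambda\mu$ only through $\log\lambda$ terms, which cancel because $G$ is $x\log x$ up to the linear term $x\log\lambda$. Consequently the Hessian of $\mu$ has rank at most $2$, and its derivatives satisfy $\partial^r\mu\asymp_L N^{1-r}$ in nondegenerate directions. I would verify that the $2\times2$ Hessian in $(h,y)$ at fixed $n$, or alternatively in $(n,h)$, is nonsingular, with determinant $\asymp_L N^{-2}$ away from the diagonal $h=y$, and that its signature is constant.

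\textbf{Step 3: exponential sum estimates.} The phase $k\mu$ has size $T=kN$ and second derivatives of order $k/N$. For large $k$ I would apply a two-dimensional $B$-process (Poisson summation plus stationary phase) in $(h,y)$ at fixed $n$. This produces a dual sum of length about $(k/N)\cdot H$ in each variable, with weight $(N/k)$. The dual phase should again be degree-one homogeneous, and the remaining $n$-sum can be treated by the third-derivative test. For small $k$ I would use an $A$-process (Weyl differencing) in $n$ with shift $q$, followed by the $B$-process. Balancing the two regimes in $k$ should give a saving of the form $N^{-\sigma}$ over the needed $N^{10/3}$ for some $\sigma$ around $1/100$, while $L$-powers are absorbed.

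\textbf{Main obstacle.} The hardest part is controlling degenerate regions. These are where the Hessian determinant vanishes (near $h=y$, where $\mu=n$ trivially and the diagonal contributes about $PH\cdot$const, which is fine) and where the stationary point leaves the support, producing boundary terms. I would first excise a neighborhood $|h-y|\le N^{1-\kappa}$ and bound it trivially. On the remainder I would use stationary phase with explicit error terms from the appendix. The extension to the ranges $[P/C_1,C_1P]$ follows by covering them with $O_{C_1}(1)$ dyadic-type boxes.
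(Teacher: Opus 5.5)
Your proposal has two genuine gaps: a false structural claim in Step 2, and an estimation scheme that cannot reach $N^{7/3}$ at large $kD$.

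\textbf{The homogeneity claim is false.} The root $\mu$ is not homogeneous of degree one. From $f(\lambda x)=\lambda f(x)+(\lambda\log\lambda)x$, scaling produces $(\lambda\log\lambda)h$ on one side of $G_y(\mu)=G_h(n)$ and $(\lambda\log\lambda)y$ on the other, and these cancel only when $h=y$. Concretely, put $v=n+h$, $w=\mu+y$, $z=\log(w/\mu)$. Then $\partial_n\mu=\log(v/n)/z$, $\partial_h\mu=(1+\log v)/z$, $\partial_y\mu=-(1+\log w)/z$. The root equation turns Euler's sum into $(n\partial_n+h\partial_h+y\partial_y)\mu-\mu=(h-y)/z\neq0$. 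Accordingly $\det\nabla^2\mu=-\Delta/(nv\mu wz^5)$ with $\Delta\asymp(h-y)L^2$, so the Hessian has full rank and signature $1$. It is not of rank $\le2$. What is true is only that its determinant is smaller, by a factor $\asymp_L D/N$ (where $D\asymp h-y$), than isotropic scaling predicts.

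This defect is the heart of the argument, not a nuisance. The paper localizes $d=h-y\asymp D$ dyadically and normalizes by $\diag(D,P,H)$ in $(d,n,h)$. This makes the Hessian of $q\kappa$ uniformly nondegenerate of size $qD$, so a \emph{three}-dimensional $B$-process applies. Its dual phase is $\bar\Psi=-qd/z$, exactly $q$ times the Euler defect. Hence your claim that the dual phase is ``again degree-one homogeneous'' is unfounded. Your dual lengths $(k/N)H$ in each variable are also wrong: $\mu\asymp P$ forces $|h-y|\ll H/L$, so the $(h,y)$-transform has only $O_L(k(1+kD/H))$ dual frequencies, not a square of them.

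\textbf{The estimation scheme falls short for large $kD$.} Any bound after the $B$-process that ignores cancellation between different dual frequencies gives at best $|S_k|\ll_L(kD)^{3/2}+N^2(kD)^{-1/2}$. This is what your $(h,y)$-transform followed by a second-derivative or $B$-step in $n$ produces, since by the Schur complement the $n$-phase has second derivative $\asymp_L kD/P^2$. That bound is admissible only for $kD\le N^{14/9}$. A one-variable third-derivative test in $n$, even granting $\partial_n^3$ its natural size $kD/P^3$, gives about $N^{1/2}(kD)^{7/6}$. At $kD=N^{5/3}$, which occurs even with the paper's shorter kernel, that is $N^{22/9}$.

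The missing idea is the paper's $B$--$A$--$B$ scheme:
\begin{enumerate}
\item Apply Weyl differencing to the \emph{dual} sum $\Sigma_q$, with anisotropic shifts $|u_1|\le N^{57/50}/(DL)$, $|u_2|,|u_3|\le N^{1/10}$.
\item Apply a second $B$-process to the correlation sums $C_\ell$. For $a$-dominant shifts this is three-dimensional, followed by a third-derivative test.
\item For the remaining shifts, combine two-dimensional bounds with a three-dimensional bound governed by the cubic $P_j(\ell)$, whose $\ell_1^3$ coefficient is nonzero, and average over $\ell_1$.
\end{enumerate}
You placed the $A$-process in the small-$k$ regime. That regime needs nothing beyond the trivial bound after one $B$-process, which gives $N^{9/4}$ for $qD\le N^{3/2}$.

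\textbf{Further points.} The margins are tiny: the largest exponent in the paper's final count is $1399/600+\varepsilon/4$, against $7/3=1400/600$. So ``balancing should give $\sigma\approx1/100$'' has no support. Your Fej\'er length $N/L^4$ forces you to handle $k$ up to $N$, hence $kD$ up to about $N^2$. The target $N^{7/3}$ permits length $R=N^{2/3+\varepsilon}$, whose zero mode is $\ll N^3/(LR)$, and the paper's exponent count uses $q<N^{2/3+\varepsilon}$ essentially. Finally, you need $|S_k|\ll N^{7/3-\sigma}$ on average over $k$ (equivalently $N^{10/3-\sigma}$ in total), not $N^{10/3-\sigma}$ on average.
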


Four-point spacing estimates also occur in the metric theory of
power sequences; see Robert and Sargos~\cite{RobertSargos} and
Aistleitner, El-Baz and Munsch~\cite{AEM}.
The tolerance $N^{-1}$ is essential here. For $P\asymp H\asymp N$,
there are $\asymp N^2$ gaps in an interval of length $O(NL)$.
Cauchy's inequality on unit intervals therefore gives a fixed-scale
energy of size at least $N^3/L$.

Proposition~\ref{prop:energy} is proved in
Sections~\ref{sec:first}--\ref{sec:remaining-shifts}.
Together with \eqref{eq:metric-moment}, it yields
\begin{equation}\label{eq:metric-variance}
 \int_I|E_N(\alpha)|^2\,d\alpha
       \ll_{I,g,\varepsilon}N^{-\sigma}L^C
\end{equation}
for some fixed $C>0$, after decreasing $\sigma$ if necessary.

\subsection{Proof of the main theorem}

\begin{proof}[Proof of Theorem~\ref{thm:main}]
Assume Proposition~\ref{prop:energy}, proved in
Sections~\ref{sec:first}--\ref{sec:remaining-shifts}.
We use the standard passage from mean-square estimates to
almost everywhere convergence; see Rudnick and
Technau~\cite[Section~2.5]{RTSmallPowers}.
Fix $I=[a,b]\subset(0,\infty)$ and an even nonnegative
$g\in C_c^\infty(\mathbb R)$.
By \eqref{eq:metric-variance} and the common subsequence
\eqref{eq:metric-common-subsequence},
\[
 \sum_k\int_I|E_{N_k}(\alpha)|^2\,d\alpha
 \ll_{I,g}\sum_kN_k^{-\sigma}(\log N_k)^C<\infty.
\]
For every integer $r\ge1$, Markov's inequality gives
\[
 \sum_k\bigl|\{\alpha\in I:|E_{N_k}(\alpha)|>1/r\}\bigr|
 \le r^2\sum_k\int_I|E_{N_k}(\alpha)|^2\,d\alpha<\infty.
\]
Borel--Cantelli and the countable intersection over $r$ imply
$E_{N_k}(\alpha)\to0$ almost everywhere on $I$.
Combining this with \eqref{eq:metric-small-limit} and
\eqref{eq:metric-centering} restores the full smoothed statistic:
\begin{equation}\label{eq:metric-test-limit}
 R_{N_k}(g;\alpha)\longrightarrow\int_{\mathbb R}g
 \quad\text{for almost every }\alpha\in I.
\end{equation}
This conclusion holds simultaneously for any fixed countable
family of test functions, after intersecting the full-measure sets.

For each positive rational $s$ and integer $r\ge1$, choose even
nonnegative $g_{s,r}^{\pm}\in C_c^\infty(\mathbb R)$ with
\[
 g_{s,r}^{-}\le\mathbf1_{[-s,s]}\le g_{s,r}^{+},
 \qquad
 2s-r^{-1}\le\int g_{s,r}^{-}
       \le\int g_{s,r}^{+}\le2s+r^{-1}.
\]
Apply \eqref{eq:metric-test-limit} to this countable family.
For $N_k>2s$, periodizing the indicator gives exactly the
condition defining $R_{N_k}(s;\alpha)$. Hence
\[
 R_{N_k}(g_{s,r}^{-};\alpha)
 \le R_{N_k}(s;\alpha)
 \le R_{N_k}(g_{s,r}^{+};\alpha).
\]
First let $k\to\infty$, then $r\to\infty$. On one full-measure
subset of $I$, this proves $R_{N_k}(s;\alpha)\to2s$ for all positive
rational $s$. Monotonicity in $s$ gives the same conclusion for
every real $s>0$.

To pass to all $N$, use the full interval count. If
$N_k\le N\le N_{k+1}$, inclusion of the index sets and of the
distance windows gives
\begin{equation}\label{eq:metric-full-sequence}
 \frac{N_k}{N_{k+1}}
 R_{N_k}\!\left(s\frac{N_k}{N_{k+1}};\alpha\right)
 \le R_N(s;\alpha)
 \le\frac{N_{k+1}}{N_k}
 R_{N_{k+1}}\!\left(s\frac{N_{k+1}}{N_k};\alpha\right).
\end{equation}
The left count uses the first $N_k$ indices with window
$s/N_{k+1}$, and the right count uses the first $N_{k+1}$ indices
with window $s/N_k$. Fix positive rational numbers $s_-<s<s_+$.
Since $N_{k+1}/N_k\to1$, for all sufficiently large $k$,
\[
 s_-<s\frac{N_k}{N_{k+1}},\qquad
 s\frac{N_{k+1}}{N_k}<s_+.
\]
Monotonicity in the window parameter and \eqref{eq:metric-full-sequence}
therefore give
\[
 2s_-\le\liminf_{N\to\infty}R_N(s;\alpha)
 \le\limsup_{N\to\infty}R_N(s;\alpha)\le2s_+.
\]
Letting $s_-\uparrow s$ and $s_+\downarrow s$ proves the limit for all $N$.

Finally, use the countable cover
$(0,\infty)=\bigcup_{r\ge2}[1/r,r]$ and
$R_N(s;-\alpha)=R_N(s;\alpha)$.
This gives the conclusion for almost every real $\alpha$ and
simultaneously for every $s>0$.
Remove also the countable set
\[
 \bigcup_{1\le m<n}\left\{\frac{\ell}{f(n)-f(m)}:\ell\in\mathbb Z\right\}.
\]
For every remaining $\alpha$, $R_N(0;\alpha)=0$ for all $N$.
\end{proof}

\section{The first transformation}\label{sec:first}

We now prove Proposition~\ref{prop:energy}.
Fix $N/(2L^4)\le P,H\le N$ and set $d=h-y$.
The terms with $d=0$ contribute $O(PH)$, by the observation following
\eqref{app:near-integer}. Interchanging $(n,h)$ and $(m,y)$
reduces the other terms to $d>0$.

For $0<d<h$, let $\kappa(d,n,h)$ be the unique positive solution of
\begin{equation}\label{first:root}
 G_{h-d}\bigl(\kappa(d,n,h)\bigr)=G_h(n).
\end{equation}
By \eqref{app:near-integer},
\begin{equation}\label{first:reduction}
 |m-\kappa(d,n,h)|\ll\frac{L^4}{N},\qquad
 d\ll\frac{P\log(1+H/P)}{L}\ll\frac HL.
\end{equation}
Thus each triple $(d,n,h)$ contributes at most one integer $m$.

Split $d>0$ into dyadic ranges $D\le d<2D$. Choose fixed nonnegative
functions $\theta_d,\theta_n,\theta_h\in C_c^\infty((0,\infty))$,
each at least one on $[1,2]$, and put
\[
 \omega(d,n,h)
 =\theta_d(d/D)\theta_n(n/P)\theta_h(h/H).
\]
Choose a fixed $C_\Omega>1$ such that $\supp\omega$ is compactly
contained in the $(d,n,h)$ box
\[
 \Omega_{D,P,H}
 =\left(\frac D{C_\Omega},C_\Omega D\right)
  \times\left(\frac P{C_\Omega},C_\Omega P\right)
  \times\left(\frac H{C_\Omega},C_\Omega H\right).
\]
By \S\ref{app:root-range}, $\kappa$ is smooth on
$\Omega_{D,P,H}$ and $\kappa\asymp P$ there.
The amplitude $\omega$ satisfies
\begin{equation}\label{first:omega}
 D^iP^jH^k
 |\partial_d^i\partial_n^j\partial_h^k\omega|\ll_{i,j,k}1.
\end{equation}

\subsection{The Fej\'er majorant}

Set $R=\lfloor N^{2/3+\varepsilon}\rfloor$.
The nonnegative Fej\'er kernel
(see Stein and Shakarchi~\cite[Chapter~2, Lemma~5.1]{SteinShakarchi})
\[
 K_R(t)=\frac1R\left|\sum_{j=0}^{R-1}\ee(jt)\right|^2
       =\sum_{|q|<R}\left(1-\frac{|q|}{R}\right)\ee(qt)
\]
satisfies $K_R(t)\gg R$ for $\|t\|\le 1/(4R)$.
Let $C_0$ be a fixed constant in the first bound of
\eqref{first:reduction}. Since $C_0RL^4/N=o(1)$, for sufficiently
large $N$ we have the pointwise bound
\[
 \mathbf 1_{\{\|t\|\le C_0L^4/N\}}\ll\frac{K_R(t)}R
 \qquad(t\in\RR).
\]
For each dyadic $D$, define
\[
 S_q=\sum_{d,n,h\in\ZZ}\omega(d,n,h)\ee\bigl(q\kappa(d,n,h)\bigr).
\]
Here and in the Poisson integrals below, extend the full weighted
summand by zero outside $\Omega_{D,P,H}$.
Each triple $(d,n,h)$ contributes at most one integer $m$, and such
an integer can occur only if $\|\kappa(d,n,h)\|\le C_0L^4/N$.
Since $\omega(d,n,h)\ge1$ for $D\le d<2D$, $P\le n<2P$,
and $H\le h<2H$, the contribution to $\mathcal E_{P,H}(N^{-1})$
from $D\le d<2D$ is at most a constant times
\[
 \begin{aligned}
 \frac1R\sum_{d,n,h\in\ZZ}\omega(d,n,h)K_R\bigl(\kappa(d,n,h)\bigr)
 &=\frac1R\sum_{|q|<R}\left(1-\frac{|q|}{R}\right)S_q\\
 &\le\frac{S_0}R+\frac2R\sum_{1\le q<R}|S_q|.
 \end{aligned}
\]
Here $S_{-q}=\overline{S_q}$ because $\omega$ is real, and
$S_0=\sum\omega\ll DPH$ by the support and boundedness of $\omega$.
Thus the block contributes at most
\begin{equation}\label{first:Fejer}
 C\left(\frac{DPH}{R}+\frac1R\sum_{1\le q<R}|S_q|\right).
\end{equation}
The dyadic sum of the zero-frequency terms is
\begin{equation}\label{first:zero}
 \sum_D\frac{DPH}{R}\ll\frac{N^3}{LR}
 \ll N^{7/3-\varepsilon}.
\end{equation}
It remains to estimate $S_q$ uniformly in
\begin{equation}\label{main:range}
 \frac{N}{2L^4}\le P,H\le N,\qquad
 1\le D\le\frac{CP\log(1+H/P)}{L}\le\frac{CH}{L},\qquad
 1\le q<R.
\end{equation}

\subsection{Poisson summation and stationary phase}

We apply Poisson summation followed by stationary phase,
namely the multidimensional van der Corput $B$-process;
see Guo~\cite[Section~2]{Guo}.

\begin{proposition}\label{first:transform}
Uniformly for the parameters $P,H,D,q$ satisfying \eqref{main:range},
\begin{equation}\label{main:firstB}
 \begin{aligned}
 S_q&=\ee(1/8)\frac{N^2}{q^{3/2}\sqrt D}\,\Sigma_q+E_q,\\
 \Sigma_q&=\sum_{a,b,s\in\ZZ}W(a,b,s)\ee\bigl(\bar\Psi(a,b,s)\bigr),\\
 |E_q|&\ll_L N^2(qD)^{-3/2}+(qD)^{1/2}.
 \end{aligned}
\end{equation}
For $(a,b,s)\in q\nabla\kappa(\Omega_{D,P,H})$, let $(d,n,h)$
be the unique solution in $\Omega_{D,P,H}$ of
$q\nabla\kappa(d,n,h)=(a,b,s)$. Define
\begin{equation}\label{main:phaseweight}
 \bar\Psi=q\kappa-ad-bn-sh,\qquad
 W=\frac{\sqrt D}{N^2}
       \frac{\omega(d,n,h)}{\sqrt{|\det\nabla^2\kappa(d,n,h)|}}.
\end{equation}
Extend $W$ smoothly by zero outside
$q\nabla\kappa(\Omega_{D,P,H})$, and interpret the full summand
as zero wherever $W$ vanishes.
\end{proposition}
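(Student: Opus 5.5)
Apply three-dimensional Poisson summation to $S_q$ and evaluate each Fourier integral by stationary phase; this is the multidimensional van der Corput $B$-process in the form of Guo~\cite[Section~2]{Guo}. Poisson summation gives
\[
 S_q=\sum_{(a,b,s)\in\ZZ^3}I(a,b,s),\qquad
 I(a,b,s)=\int_{\RR^3}\omega(u)\,\ee\bigl(q\kappa(u)-a u_1-bu_2-su_3\bigr)\,du,
\]
where $u=(d,n,h)$ ranges over $\Omega_{D,P,H}$. The inputs needed are derivative bounds for $\kappa$ on $\Omega_{D,P,H}$, which I take from the appendix (\S\ref{app:root-range} and the derivative calculations). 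These should be of the form
\[
 D^iP^jH^k\,|\partial_d^i\partial_n^j\partial_h^k\kappa|\ll_L \lambda\qquad(i+j+k\ge1),
\]
with a natural size $\lambda$. By differentiating \eqref{first:root}, I expect $\lambda\asymp_L DP/H$ up to log factors. We also need the nondegeneracy bound
\[
 |\det\nabla^2\kappa|\asymp_L \lambda^3/(D^2P^2H^2).
\]
The normalization in \eqref{main:phaseweight} encodes exactly this scale. There $q^{3/2}|\det\nabla^2\kappa|^{1/2}$ should be $\asymp_L q^{3/2}\sqrt D/N^2$, up to the factor absorbed into $W$, once one uses $P,H\asymp_L N$. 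I will also check that $\nabla\kappa$ is a diffeomorphism of $\Omega_{D,P,H}$ onto its image, so that for each $(a,b,s)$ the stationary point is unique.

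The steps are as follows.

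First, for $(a,b,s)$ at distance $\gg_L$ (the relevant scale) from $q\nabla\kappa(\operatorname{supp}\omega)$, integrate by parts repeatedly in the direction of the gradient of the phase. Using \eqref{first:omega} and the derivative bounds for $\kappa$, each step gains a factor of size (distance)$^{-1}$ times the appropriate box side. The non-stationary frequencies therefore contribute $O(N^{-A})$. The frequencies near the boundary of the image form a thin shell. There I use the standard first-derivative estimate, or smooth truncation, which yields a term absorbed into $(qD)^{1/2}$. This counts the $\ll_L qD$-sized set of boundary frequencies, each weighted by a small integral.

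Second, for stationary frequencies, apply the stationary phase expansion. The main term is
\[
 \ee(\tfrac{\operatorname{sgn}}8)\,\omega\,|\det q\nabla^2\kappa|^{-1/2}\,\ee(\bar\Psi),
\]
and the error is the next order term, relatively smaller by a factor $(q\lambda)^{-1}$ times powers of $L$. The signature of $\nabla^2\kappa$ must be computed, presumably from the appendix Hessian formula, and shown to equal $+1$ throughout, which gives the factor $\ee(1/8)$. The main term then equals $\ee(1/8)N^2q^{-3/2}D^{-1/2}W\,\ee(\bar\Psi)$, by the definition of $W$. Summing the relative error over the $\ll_L q^3\cdot\mathrm{vol}(\nabla\kappa(\Omega))$ stationary frequencies yields the $N^2(qD)^{-3/2}$ term of $E_q$.

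The main obstacle is the uniform control of the Hessian of $\kappa$ over the whole range \eqref{main:range}: both its determinant and its signature, together with a lower bound on the eigenvalues that is strong enough for the stationary phase error to be uniform. The difficulty is that $\kappa$ is defined only implicitly. I would first compute $\nabla\kappa$ and $\nabla^2\kappa$ by implicit differentiation of $G_{h-d}(\kappa)=G_h(n)$, and expand in the small parameter $d/h$. To leading order, $\kappa\approx n+d\,\partial_h G_h(n)/\partial_nG_h(n)$, which is linear in $d$. The Hessian then has a block structure with $d$-row entries of size $\asymp 1/H$ and an $(n,h)$ block of size $D/H^2$. The determinant would come out as $\asymp D/H^4$, up to logs, with signature $1$. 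If the leading-order determinant degenerates in some corner of the range, for instance when $H/P$ is extreme, I would split that range off and treat it by the error terms directly.
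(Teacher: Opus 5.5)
Your overall route matches the paper's: Poisson summation and stationary phase through Proposition~\ref{prop:multidimensional-B}, with $\mathsf X=\diag(D,P,H)$, $T=qD$ and signature $1$. The gap is in the step you yourself call the main obstacle. Expanding in $d/h$ cannot give uniform nondegeneracy of $\nabla^2\kappa$ over \eqref{main:range}.

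Although $d/h\ll L^{-1}$, the nonlinearity of $\kappa$ in $d$ is measured by $(\kappa-n)/n$. From $\partial_d\kappa=(1+\log w)/z$ one gets $\kappa-n\asymp dL/\log(1+H/P)$, which is of size $P$ at the top of \eqref{main:range}; this is consistent with \eqref{app:root-scales}, which only asserts $\kappa\asymp n$. There all orders of your expansion are comparable.

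Even lower in the range, the leading-order model cannot certify a lower bound. Take $H\asymp P$:
\begin{itemize}
\item The normalized matrix $K_\kappa$ has entries as large as $L$, e.g.\ $(K_\kappa)_{12}=P\partial_d\partial_n\kappa$. Yet $\det K_\kappa\asymp L^2$, so the determinant comes from cancellation.
\item The entry your linear model discards is $(K_\kappa)_{11}=D\partial_d^2\kappa\asymp DL^2/P$, by $z\partial_d^2\kappa=(\partial_d\kappa)^2/\kappa-(\partial_d\kappa-1)^2/w$.
\item Hence the single omitted term $(K_\kappa)_{11}\det(K_\kappa)_{\{2,3\}}\asymp DL^4/P$ exceeds $\det K_\kappa$ throughout $P/L^2\ll D\ll P/L$.
\end{itemize}
Splitting such ranges off and ``treating them by the error terms'' is not an option, since \eqref{main:firstB} is asserted uniformly.

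The paper avoids perturbation altogether. Identity \eqref{app:root-hessian} writes $z\nabla^2\kappa$ exactly as four rank-one terms. Expanding the determinant through the $3\times3$ minors gives $\det\nabla^2\kappa=-\Delta/(nv\kappa wz^5)$, with $\Delta=\int_0^d(1+\log\kappa)(1+\log w)\,dt\asymp dL^2$. This yields $|\det K_\kappa|^{-1}\ll L^B$, hence $K_\kappa^{-1}=O_L(1)$ by cofactors, and the signature then follows from $\partial_d^2\kappa>0$.

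Global injectivity of $q\nabla\kappa$ on $\Omega_{D,P,H}$, which the statement presupposes, is likewise only promised. The Hessian is indefinite, and the quantitative inverse function theorem with the available bounds gives injectivity only on pieces of normalized size $L^{-B}$. The paper proves it from \eqref{app:four-exponentials}, which reduces the stationary equations to a strictly monotone equation in $z$.

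Your error accounting also needs repair.
\begin{itemize}
\item The box $\mathcal B_q$ has sides $O_L(q)$, $O_L(qD/P)$, $O_L(qD/H)$, and the last two are far below $1$ when $qD$ is small. Its lattice-point count is therefore $O_L(q(1+qD/P)(1+qD/H))$, not $q^3\operatorname{vol}(\nabla\kappa(\Omega_{D,P,H}))$.
\item With per-frequency error $DPH(qD)^{-5/2}$, the volume part yields $(qD)^{1/2}$, and the leading factor $q$ yields $PH(qD)^{-3/2}\le N^2(qD)^{-3/2}$. So the two terms of $E_q$ arise the other way round from your description. Your volume count undercounts exactly where $N^2(qD)^{-3/2}$ dominates.
\item The nonstationary frequencies are not $O(N^{-A})$ when $qD\asymp1$. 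In the $j$th dyadic shell each integration by parts gains only a factor of order $(2^jT)^{-1}$, which is not small for $j=0$. The tail is $\ll_L DPH(qD)^{-K}(1+q)(1+qD/P)(1+qD/H)$, as in \eqref{eq:frequency-tail-sum}, and it is absorbed only because $T=qD\ge1$.
\item No separate boundary shell is needed. The amplitude $\omega$ has a fixed margin inside $\Omega_{D,P,H}$. Lemma~\ref{lem:nonstationary-phase} handles frequencies with no stationary point on $\supp\omega$ using the Hessian bound instead of a gradient lower bound.
\item Your first-derivative bounds must be stated for $\kappa-n$, with frequency centre $(0,q,0)$, since $P\partial_n\kappa\approx P$ is not $\ll_L\lambda$.
\end{itemize}
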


\begin{proof}
Apply Proposition~\ref{prop:multidimensional-B} with phase $q\kappa$,
amplitude $\omega$, and
\[
 m=3,\qquad c=0,\qquad \mathsf X=\diag(D,P,H),\qquad
 T=qD\ge1,\qquad \xi=(0,q,0).
\]
The normalized domain is the fixed box
$(C_\Omega^{-1},C_\Omega)^3$.
The support of $\omega$ has a fixed positive margin inside
$\Omega_{D,P,H}$ in the $(D,P,H)$ scales.
The derivative hypotheses follow from \S\ref{app:coarse} and
\eqref{first:omega}. The normalized Hessian is $K_\kappa$ from
\eqref{app:K-definition}; this matrix and its inverse are $O_L(1)$.
The stationary map is injective by \S\ref{app:first-unique}, and
its Hessian has signature $1$ by \S\ref{app:root-range}.

For the centered gradient bound, \S\ref{app:coarse} gives a fixed
$A=CL^B\ge1$ such that throughout $\Omega_{D,P,H}$,
\[
 |\partial_d\kappa|\le A,\qquad
 |\partial_n\kappa-1|\le AD/P,\qquad
 |\partial_h\kappa|\le AD/H.
\]
Thus the counting box may be taken to be
\[
 \mathcal B_q=\{(a,b,s)\in\mathbb R^3:
 |a|\le2Aq,\quad |b-q|\le2AqD/P,\quad |s|\le2AqD/H\}.
\]
It contains $q\nabla\kappa(\Omega_{D,P,H})$ and has
$O_L(q(1+qD/P)(1+qD/H))$ integer points.
The center $(0,q,0)$ corresponds to writing the phase as
$q(\kappa-n)-ad-(b-q)n-sh$.

The leading term in \eqref{eq:general-B-formula}, using
\eqref{main:phaseweight}, is exactly
$\ee(1/8)N^2\Sigma_q/(q^{3/2}\sqrt D)$.
By \eqref{eq:general-B-error} and $q\ge1$,
\begin{align*}
 |E_q|
 &\ll_L DPH(qD)^{-5/2}q(1+qD/P)(1+qD/H)\\
 &=PH(qD)^{-3/2}+(P+H)(qD)^{-1/2}+(qD)^{1/2}\\
 &\ll N^2(qD)^{-3/2}+(qD)^{1/2}.
\end{align*}
The last line uses $P,H\le N$ and
$2N(qD)^{-1/2}\le N^2(qD)^{-3/2}+(qD)^{1/2}$.
\end{proof}

Set
\begin{equation}\label{main:scales}
 \beta=\frac{qDL}{N},\qquad S=\operatorname{diag}(q,\beta,\beta).
\end{equation}
For $x=(a,b,s)$ and $x'=(a',b',s')$, the normalized distance used below is
\[
 |S^{-1}(x-x')|=
 \left(\frac{(a-a')^2}{q^2}
 +\frac{(b-b')^2+(s-s')^2}{\beta^2}\right)^{1/2}.
\]
Equations~\eqref{app:dual-coarse} and \eqref{app:amplitude-coarse}
give, for every fixed order,
\begin{equation}\label{main:inputbounds}
 \begin{aligned}
 q^i\beta^{j+k}
 |\partial_a^i\partial_b^j\partial_s^kW|&\ll_L1,\\
 q^i\beta^{j+k}
 |\partial_a^i\partial_b^j\partial_s^k\bar\Psi|&\ll_LqD
 \qquad (i+j+k\ge2).
 \end{aligned}
\end{equation}
The phase is defined, and these bounds hold uniformly, whenever
\begin{equation}\label{first:legal}
 \inf_{x_0\in\supp W}|S^{-1}(x-x_0)|<L^{-B_*},
 \qquad x=(a,b,s),
\end{equation}
for a fixed $B_*>0$. The corresponding stationary point
$(d,n,h)$ lies in $\Omega_{D,P,H}$.

The support $\supp W$ lies in $\mathcal B_q$, defined in the proof
of Proposition~\ref{first:transform}.
Since $P,H\ge N/(2L^4)$, the box $\mathcal B_q$ has
$O_L(q(1+\beta)^2)$ integer points.
We use $\mathcal B_q$ only for counting; the phase $\ps$ is evaluated
on the neighborhood \eqref{first:legal} of $\supp W$.

\subsection{The range \texorpdfstring{$qD\le N^{3/2}$}{qD at most N to the power 3/2}}

Taking absolute values in the first transformed sum gives
\begin{align*}
 \frac{N^2}{q^{3/2}\sqrt D}|\Sigma_q|
 &\ll_L\frac{N^2}{q^{3/2}\sqrt D}\,q(1+\beta)^2\\
 &=\frac{N^2}{\sqrt{qD}}+2NL\sqrt{qD}+L^2(qD)^{3/2}.
\end{align*}
The middle term is at most the sum of the first and third terms.
The error in \eqref{main:firstB} is also absorbed, since $qD\ge1$.
Hence
\begin{equation}\label{main:small}
 |S_q|\ll_L(qD)^{3/2}+\frac{N^2}{\sqrt{qD}}
       \ll_L N^{9/4}\qquad(1\le qD\le N^{3/2}).
\end{equation}
The dyadic sum of these contributions to \eqref{first:Fejer} is
$O_L(N^{9/4})$.

\section{Differencing and derivative estimates}\label{sec:differencing}
Assume $qD>N^{3/2}$ and put
\begin{equation}\label{main:R}
 \mathcal R=\{u\in\ZZ^3: |u_1|\le N^{57/50}/(DL),\quad
 |u_2|,|u_3|\le N^{1/10}\}.
\end{equation}
Then $\#\mathcal R\asymp N^{67/50}/(DL)$. Every difference
$\ell\in\mathcal R-\mathcal R$ satisfies
\begin{equation}\label{main:shifts}
 |\ell_1|\le 2N^{57/50}/(DL),\quad
 |\ell_2|,|\ell_3|\le2N^{1/10},\quad
 |S^{-1}\ell|_\infty\ll N^{-9/25}/L.
\end{equation}
Indeed, $|\ell_1|/q\ll N^{57/50}/(qDL)$ and
$(|\ell_2|+|\ell_3|)/\beta\ll N^{11/10}/(qDL)$.
For the localization in \S\ref{sec:partition}, we establish the
following derivative estimates for all $x$ satisfying
\[
 \inf_{x'\in\supp W}|S^{-1}(x-x')|<\tfrac12L^{-B_*}.
\]
By \eqref{main:shifts}, every segment $x+t\ell$, $0\le t\le1$,
with such a starting point $x$ lies in the neighborhood
\eqref{first:legal} of $\supp W$.
Thus $\ps$ is defined along $x+t\ell$, and the corresponding
stationary point $(d,n,h)$ remains in $\Omega_{D,P,H}$.

Write
\begin{equation}\label{main:correlations}
 W_\ell(x)=W(x+\ell)\overline{W(x)},\qquad
 \ph(x)=\ps(x+\ell)-\ps(x),\qquad
 C_\ell=\sum_{x\in\ZZ^3}W_\ell(x)\ee(\ph(x)).
\end{equation}
Each summand defining $C_\ell$ is extended by zero outside
$\supp W_\ell$.
Averaging the translated sums gives the exact identity
\[
 (\#\mathcal R)\Sigma_q
 =\sum_{x\in\mathbb Z^3}\sum_{u\in\mathcal R}
 W(x+u)\ee(\ps(x+u)).
\]
The outer sum has $O_L(q\beta^2)$ possibly nonzero terms.
Cauchy's inequality and expansion of the square therefore give
\[
 |\Sigma_q|^2\ll_L
 \frac{q\beta^2}{(\#\mathcal R)^2}
 \sum_{u,v\in\mathcal R}|C_{u-v}|.
\]
Each difference has at most $\#\mathcal R$ representations, and
$C_0=\sum_x|W(x)|^2\ll_Lq\beta^2$. This proves the
usual differencing inequality
(see Graham and Kolesnik~\cite{GK}):
\begin{equation}\label{main:Aineq}
 |\Sigma_q|^2\ll_L\frac{q\beta^2}{\#\mathcal R}
 \left(q\beta^2+\sum_{\substack{\ell\in\mathcal R-\mathcal R\\\ell\ne0}}
 |C_\ell|\right).
\end{equation}
Combining this with \eqref{main:firstB} gives
\begin{equation}\label{main:Aentry}
 |S_q|\ll_L\frac{(qD)^{3/2}}{\sqrt{\#\mathcal R}}
 +N\sqrt D\left(\frac1{\#\mathcal R}\sum_{\ell\ne0}|C_\ell|\right)^{1/2}
 +|E_q|.
\end{equation}

To estimate the correlations $C_\ell$, we use the second $B$-process
in three or two variables according to the available Hessian bounds.
For the $a$-dominant shifts defined in \eqref{main:dominant},
\S\ref{sec:convex-finish} applies the three-dimensional $B$-process
followed by a third-derivative estimate.
For the remaining shifts, \S\ref{sec:remaining-shifts} combines a
three-dimensional estimate where the full Hessian determinant is
sufficiently large with a two-dimensional estimate in $(a,b)$ or
$(a,s)$ near degeneracy, and then averages over the shifts.

For a nonzero difference put
\begin{equation}\label{main:F}
 F_\ell=\left(D^2\ell_1^2+\frac{N^2}{L^2}(\ell_2^2+\ell_3^2)\right)^{1/2}.
\end{equation}
For every fixed $i+j+k\ge2$, the segment formula and
\eqref{main:inputbounds} give
\begin{equation}\label{main:shiftbounds}
 q^i\beta^{j+k}|\partial_a^i\partial_b^j\partial_s^k\ph|
 \ll_L F_\ell.
\end{equation}
In particular,
\begin{equation}\label{main:hessian}
 \|S\nabla^2\ph S\|\ll_LF_\ell.
\end{equation}

\subsection{Three fixed directions}\label{sec:lower}
Fix an arbitrary point $x_0=(a_0,b_0,s_0)$ satisfying
\[
 \inf_{x\in\supp W}|S^{-1}(x_0-x)|<\tfrac12L^{-B_*}.
\]
Subscript $0$ denotes evaluation at $x_0$. We use the quantities
$v=n+h$, $w=\kappa+h-d$, $z=\log(w/\kappa)$, and $\Delta$ from
Appendix~\ref{app:root}. The identities proved there give
\begin{equation}\label{main:dualidentities}
 \ps=-qd/z,\qquad \nabla\ps=-(d,n,h)^T,\qquad
 \partial_a\ps=(z/q)\ps,\qquad
 \nabla^2\ps=-q^{-1}(\nabla^2\kappa)^{-1}.
\end{equation}
Define the fixed vectors
\begin{equation}\label{main:frame}
 \begin{aligned}
 V_{1,0}&=\frac q{z_0}(1,0,0)^T,\\
 V_{2,0}&=q\sqrt{\frac{Lh_0}{v_0z_0^3}}(-1,d_0/n_0,0)^T,\\
 V_{3,0}&=q\sqrt{\frac{Lh_0}{v_0z_0^3}}(-1,0,d_0/h_0)^T,
 \qquad V_0=(V_{1,0},V_{2,0},V_{3,0}),
 \end{aligned}
\end{equation}
and let $\partial_i=V_{i,0}\cdot\nabla_{a,b,s}$, with coefficients fixed
throughout the neighborhood \eqref{first:legal} of $\supp W$. Then
\begin{equation}\label{main:directionpurpose}
 (\partial_1\ps)(x_0)=\ps(x_0),\qquad
 (\partial_2\ps)(x_0)=(\partial_3\ps)(x_0)=0.
\end{equation}
For $x$ satisfying \eqref{first:legal}, define
\begin{equation}\label{main:Adef}
 \mathsf A(x)=(A_{ij}(x))_{1\le i,j\le3},\qquad
 A_{ij}(x)=\frac{\partial_i\partial_j\ps(x)}{\ps(x)},
 \qquad A_{ij,0}=A_{ij}(x_0).
\end{equation}
Differentiate the formulas in Appendix~\ref{mat:appendix}, then
evaluate at $x_0$. This gives
\begin{equation}\label{main:Abounds}
 \begin{gathered}
 1\le A_{11,0}\le C,\quad A_{22,0}\le-\tfrac12,\quad
 |A_{ij,0}|\le C,\quad A_{12,0},A_{13,0}=o(1),\\
 A_{22,0}A_{33,0}-A_{23,0}^2\le-c,\\
 (\partial_1A_{ij})(x_0)=o(1)\quad(1\le i,j\le3),\qquad
 (\partial_kA_{ij})(x_0)=o(1)\quad(i,j,k\in\{2,3\}).
 \end{gathered}
\end{equation}
All constants and small terms are uniform in $x_0$.

Set
\begin{equation}\label{main:Tdef}
 T_k=\left(\frac{(\partial_i\partial_j\partial_k\ps)(x_0)}{\ps(x_0)}
 \right)_{1\le i,j\le3}.
\end{equation}
By the product rule, its $(i,j)$ entry is
$\delta_{k1}A_{ij,0}+(\partial_kA_{ij})(x_0)$,
where $\delta_{k1}$ equals $1$ if $k=1$ and $0$ otherwise.
The fixed directional derivatives commute. For example,
\[
 (T_2)_{12}=(T_1)_{22}=A_{22,0}+o(1),\qquad
 (T_2)_{11}=(T_1)_{12}=o(1).
\]
Thus entries not covered directly by the derivative bounds in
\eqref{main:Abounds} follow by symmetry of the third derivative.
In this way, \eqref{main:Abounds} yields
\begin{equation}\label{main:Tmatrices}
 \begin{aligned}
 T_1&=\begin{pmatrix}A_{11,0}&0&0\\0&A_{22,0}&A_{23,0}\\0&A_{23,0}&A_{33,0}\end{pmatrix}+o(1),\\
 T_2&=\begin{pmatrix}0&A_{22,0}&A_{23,0}\\A_{22,0}&0&0\\A_{23,0}&0&0\end{pmatrix}+o(1),\\
 T_3&=\begin{pmatrix}0&A_{23,0}&A_{33,0}\\A_{23,0}&0&0\\A_{33,0}&0&0\end{pmatrix}+o(1).
 \end{aligned}
\end{equation}

The following lemma gives, in the frame $V_0$, the principal minor
and full determinant bounds needed for the two- and three-dimensional
$B$-processes.

\begin{lemma}\label{main:combination}
For $I\subseteq\{1,2,3\}$, a subscript $I$ denotes the principal
submatrix formed from the rows and columns indexed by $I$.
There are constants $c,\delta>0$ such that, for every $r\in\RR^3$,
\begin{equation}\label{main:combination2}
 \max_{I=\{1,2\},\{1,3\}}
 \left|\det\!\left(\left(\sum_{k=1}^3r_kT_k\right)_I\right)\right|
 \ge c|r|^2.
\end{equation}
If $|r_2|+|r_3|\le\delta|r_1|$, then
\begin{equation}\label{main:combination3}
 \left|\det\left(\sum_{k=1}^3r_kT_k\right)\right|\ge c|r_1|^3.
\end{equation}
Both estimates remain valid, with a possibly smaller constant $c>0$,
after adding to $\sum_{k=1}^3r_kT_k$ any $3\times3$ matrix whose
entries are $o(|r|)$ uniformly in $x_0$ and $r\ne0$ as $N\to\infty$.
\end{lemma}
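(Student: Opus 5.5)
The plan is to compute the determinants explicitly from the leading form \eqref{main:Tmatrices}, and to treat every $o(1)$ entry of the $T_k$, together with the added $o(|r|)$ matrix, as a single perturbation. Abbreviate $a=A_{11,0}$, $p=A_{22,0}$, $t=A_{23,0}$, $u=A_{33,0}$. By \eqref{main:Abounds} we have $a\ge1$, $p\le-\tfrac12$, all four quantities are $O(1)$, and $pu-t^2\le-c$. Up to an error matrix $\mathcal N$ with entries $o(|r|)$, the matrix $M=\sum_k r_kT_k$ has entries
\[
 M_{11}=r_1a,\quad M_{12}=r_2p+r_3t,\quad M_{13}=r_2t+r_3u,\quad
 M_{22}=r_1p,\quad M_{23}=r_1t,\quad M_{33}=r_1u .
\]
Since $2\times2$ and $3\times3$ determinants are polynomials whose entries here are $O(|r|)$, replacing $M$ by $M+\mathcal N$ changes a $2\times2$ minor by $o(|r|^2)$ and the full determinant by $o(|r|^3)$. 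It therefore suffices to prove both bounds, with a constant $2c$, for the leading matrix $M$; the perturbation is then absorbed for large $N$. In the second statement we also use $|r|\asymp|r_1|$.

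For \eqref{main:combination2}, note that
\[
 \det M_{\{1,2\}}=r_1^2ap-(r_2p+r_3t)^2,\qquad
 \det M_{\{1,3\}}=r_1^2au-(r_2t+r_3u)^2 .
\]
Both terms in the first determinant are nonpositive, so
\[
 |\det M_{\{1,2\}}|\ge\tfrac12r_1^2+(r_2p+r_3t)^2 .
\]
Put $\rho=(r_2,r_3)$ and $B=\begin{pmatrix}p&t\\t&u\end{pmatrix}$. Since $|\det B|\ge c$ and the entries of $B$ are bounded, $B^{-1}$ is bounded, so $|B\rho|\ge c_1|\rho|$.

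The argument splits into cases according to a small parameter $\kappa>0$.
\begin{enumerate}
\item If $|r_1|\ge\kappa|\rho|$, then $r_1^2\gg_\kappa|r|^2$, and the first minor suffices.
\item If $|r_1|<\kappa|\rho|$, then one coordinate of $B\rho$ has square at least $c_1^2|\rho|^2/2$.
\begin{enumerate}
\item If it is the first coordinate, the first minor is at least $c_1^2|\rho|^2/2\gg|r|^2$.
\item If it is the second coordinate, then $|\det M_{\{1,3\}}|\ge c_1^2|\rho|^2/2-C\kappa^2|\rho|^2$. Choosing $\kappa$ small in terms of $c_1$ and $C$ makes this $\gg|\rho|^2\asymp|r|^2$.
\end{enumerate}
\end{enumerate}
This is the only place where a sign issue arises, because the sign of $u$ is unknown. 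The case split is designed precisely to avoid needing it.

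For \eqref{main:combination3}, expand $\det M$ by the first row. The $r_1^3$ term is
\[
 r_1a\cdot r_1^2(pu-t^2),
\]
which has modulus at least $c\,|r_1|^3$. Every other term contains at least one factor $M_{12}$ or $M_{13}$, and in fact two such factors: they are $-M_{12}(M_{12}M_{33}-M_{13}M_{23})$ and the analogous $M_{13}$ term. These are $O(|r_1|(|r_2|+|r_3|)^2)=O(\delta^2|r_1|^3)$. Taking $\delta$ small gives $|\det M|\ge\tfrac c2|r_1|^3$, and the perturbation is then absorbed as described in the first paragraph.

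I expect the main care point to be the uniformity claim: all constants ($c_1$, $\kappa$, $\delta$) must depend only on the uniform bounds in \eqref{main:Abounds}, and not on $x_0$. This is automatic, since only those inequalities were used.
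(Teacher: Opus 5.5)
Your proof is correct and is essentially the paper's argument: the same explicit $2\times2$ minors, the same use of $A_{22,0}\le-\tfrac12$ and of the bounded inverse of the lower-right block, the same first-row expansion of the determinant, and the same absorption of the $o(|r|)$ perturbation. The differences are only cosmetic. Your case split on $|r_1|$ versus $|(r_2,r_3)|$ unpacks the paper's remark that the second minor, together with $r_1^2$, controls $(A_{23,0}r_2+A_{33,0}r_3)^2$. You bound the two cross terms by $O(\delta^2|r_1|^3)$, whereas the paper uses the exact factorization $(A_{22,0}A_{33,0}-A_{23,0}^2)\,r_1\,[A_{11,0}r_1^2-A_{22,0}r_2^2-2A_{23,0}r_2r_3-A_{33,0}r_3^2]$.
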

\begin{proof}
For the displayed parts of \eqref{main:Tmatrices}, the two minors are
\[
 A_{11,0}A_{22,0}r_1^2-(A_{22,0}r_2+A_{23,0}r_3)^2,
 \qquad
 A_{11,0}A_{33,0}r_1^2-(A_{23,0}r_2+A_{33,0}r_3)^2.
\]
The first has absolute value $\gg r_1^2+(A_{22,0}r_2+A_{23,0}r_3)^2$,
since $A_{22,0}\le-1/2$. The second, together with $r_1^2$, controls
$(A_{23,0}r_2+A_{33,0}r_3)^2$.
The lower right $2\times2$ matrix has a bounded inverse, so these
quantities control $|r|^2$. The determinant is
\[
 (A_{22,0}A_{33,0}-A_{23,0}^2)r_1
 [A_{11,0}r_1^2-A_{22,0}r_2^2-2A_{23,0}r_2r_3-A_{33,0}r_3^2].
\]
When $|r_2|+|r_3|\le\delta|r_1|$, the bracket is at least
$r_1^2/2$ for small fixed $\delta$. A perturbation of size $o(|r|)$ changes the minors
by $o(|r|^2)$ and the determinant by $o(|r|^3)$.
\end{proof}

\subsection{The shifted Hessian}
Write $\ell=\sum_k\tau_kV_{k,0}$, where
\begin{equation}\label{main:tau}
 \begin{aligned}
 \tau_1&=\frac{z_0}{q}\left(\ell_1+\frac{n_0\ell_2+h_0\ell_3}{d_0}\right),\\
 \tau_2&=\frac{n_0\ell_2}{qd_0}\sqrt{\frac{v_0z_0^3}{Lh_0}},\qquad
 \tau_3=\frac{h_0\ell_3}{qd_0}\sqrt{\frac{v_0z_0^3}{Lh_0}}.
 \end{aligned}
\end{equation}
The root estimates and \eqref{main:frame} give
\[
 \|S^{-1}V_0\|+\|V_0^{-1}S\|\ll_L1.
\]
Since $\tau=V_0^{-1}\ell$, it follows that
\[
 |\tau|\asymp_L|S^{-1}\ell|
 =\frac{F_\ell}{qD}\ll_LN^{-9/25}.
\]
Each $\partial_i$ is a linear combination of
$q\partial_a$, $\beta\partial_b$ and $\beta\partial_s$, with fixed
$O_L(1)$ coefficients. Since $|\ps(x_0)|\asymp_LqD$,
\eqref{main:inputbounds} along the shift segment gives
\[
 \max_{\substack{1\le i,j,k,m\le3\\0\le t\le1}}
 \left|\frac{(\partial_i\partial_j\partial_k\partial_m\ps)(x_0+t\ell)}
 {\ps(x_0)}\right|\ll_L1.
\]
Entrywise Taylor expansion along the segment gives
\begin{equation}\label{main:taylor}
 \begin{aligned}
 \frac{\partial_i\partial_j\ph(x_0)}{\ps(x_0)}
 &=\sum_{k=1}^3\tau_k(T_k)_{ij}+\frac{R^V_{ij}}{\ps(x_0)},\\
 R^V_{ij}&=\int_0^1(1-t)\sum_{k,m=1}^3\tau_k\tau_m
 (\partial_i\partial_j\partial_k\partial_m\ps)(x_0+t\ell)\,\dd t.
 \end{aligned}
\end{equation}
The remainder therefore satisfies
\[
 \max_{i,j}|R^V_{ij}/\ps(x_0)|\ll_L|\tau|^2=o(|\tau|).
\]
Set $\mathsf B_\ell=V_0^T\nabla^2\ph(x_0)V_0/\ps(x_0)$.
By \eqref{main:taylor},
\[
 \mathsf B_\ell=\sum_{k=1}^3\tau_kT_k
 +\left(\frac{R^V_{ij}}{\ps(x_0)}\right)_{1\le i,j\le3}.
\]
The preceding remainder bound allows us to apply
Lemma~\ref{main:combination} with $r=\tau$.
The left-hand sides below are the Hessian determinants of the
restrictions of $\ph$ to the $(a,b)$ and $(a,s)$ planes, with $s$
and $b$ fixed, respectively.
Since $(V_{1,0},V_{2,0})$ and $(V_{1,0},V_{3,0})$ are bases of
these planes, respectively, the changes of basis give
\begin{equation}\label{main:planefactors}
 \begin{aligned}
 [\partial_a^2\ph\,\partial_b^2\ph-(\partial_a\partial_b\ph)^2](x_0)
 &=\frac{v_0n_0^2z_0^3}{q^2Lh_0}
   \det\!\left((\mathsf B_\ell)_{\{1,2\}}\right),\\
 [\partial_a^2\ph\,\partial_s^2\ph-(\partial_a\partial_s\ph)^2](x_0)
 &=\frac{v_0h_0z_0^3}{q^2L}
   \det\!\left((\mathsf B_\ell)_{\{1,3\}}\right).
 \end{aligned}
\end{equation}
Consequently,
\begin{equation}\label{main:2dlower}
 q^2\beta^2\max\{|\partial_a^2\ph\,\partial_b^2\ph-(\partial_a\partial_b\ph)^2|,
 |\partial_a^2\ph\,\partial_s^2\ph-(\partial_a\partial_s\ph)^2|\}
 \asymp_LF_\ell^2.
\end{equation}

Fix a sufficiently large constant $K_*$, and call a difference
\emph{$a$-dominant} if
\begin{equation}\label{main:dominant}
 |\ell_1|\ge L^{K_*}N^{11/10}/(DL).
\end{equation}
For such a difference, $n_0,h_0\ll N$ and $d_0\asymp D$ give
\[
 \frac{n_0|\ell_2|+h_0|\ell_3|}{d_0|\ell_1|}
 \ll L^{1-K_*}.
\]
Consequently, \eqref{main:tau} implies
\[
 \tau_1=\frac{z_0\ell_1}{q}\bigl(1+O(L^{1-K_*})\bigr),\qquad
 \frac{|\tau_2|+|\tau_3|}{|\tau_1|}
 \ll L^{1-K_*}\sqrt{\frac{v_0z_0}{Lh_0}}=o(1).
\]
In particular, the transverse terms cannot cancel the main term in
$\tau_1$, and $F_\ell\asymp D|\ell_1|$.
Using
\[
 \det V_0=\frac{q^3Ld_0^2}{n_0v_0z_0^4},\qquad
 \frac{|\ps(x_0)|^3}{(\det V_0)^2}
 =\frac{n_0^2v_0^2z_0^5}{q^3L^2d_0},
\]
we obtain
\begin{equation}\label{main:3dlower}
 |\det\nabla^2\ph|\asymp_L\frac{N^4|\ell_1|^3}{q^6D},
 \qquad |\det(S\nabla^2\ph S)|\asymp_LF_\ell^3.
\end{equation}
Section~\ref{fourth:appendix} also gives
\begin{equation}\label{main:purefourth}
 q^4\partial_a^4\ps=-qd z^3(1+O(L^{-2})).
\end{equation}
Integrating along the shift and using \eqref{main:dominant} to absorb
the two transverse terms yields
\begin{equation}\label{main:thirdlower}
 q^3|\partial_a^3\ph|\asymp_LF_\ell.
\end{equation}
Since $x_0$ was arbitrary, these bounds hold uniformly for
$\inf_{x'\in\supp W}|S^{-1}(x-x')|<\tfrac12L^{-B_*}$,
with \eqref{main:3dlower} and \eqref{main:thirdlower} restricted to
$a$-dominant shifts.

\section{The three-dimensional transformation and the third derivative estimate}
\label{sec:convex-finish}

We first construct a partition that will also be used in
\S\ref{sec:remaining-shifts}.

\subsection{Localization}\label{sec:partition}
Put $\rho=N^{-1/4}$. Choose a fixed nonnegative
$\eta\in C_c^\infty(\RR^3)$ with $\eta(y)=1$ for $|y|\le\sqrt3/2$
and $\supp\eta\subset\{|y|<1\}$. Define
\[
 X(y)=\frac{\eta(y)}{\sum_{m\in\ZZ^3}\eta(y-m)}.
\]
The denominator is locally finite, smooth, periodic, and at least one.
Thus $X$ is fixed independently of all parameters, and
$\sum_{j\in\ZZ^3}X(y-j)=1$. Set
\begin{equation}\label{main:partition}
 x_j=\rho Sj,\qquad
 \chi_j(x)=X(\rho^{-1}S^{-1}(x-x_j)),\qquad W_{\ell,j}=\chi_jW_\ell.
\end{equation}
At most $27$ supports $\supp\chi_j$ meet at any point. Retain the
indices $j$ for which $\supp\chi_j\cap\supp W\ne\varnothing$.
This gives $O_L(\rho^{-3})$ indices, independent of
$\ell$, and
\[
 C_\ell=\sum_jC_{\ell,j},\qquad
 C_{\ell,j}=\sum_{x\in\ZZ^3}W_{\ell,j}(x)\ee(\ph(x)).
\]
For every fixed derivative order,
\begin{equation}\label{main:cutoffbounds}
 q^i\beta^{j+k}|\partial_a^i\partial_b^j\partial_s^kW_{\ell,m}|
 \ll_L\rho^{-(i+j+k)}.
\end{equation}
Let $Q=[-1,1]^3$, $Q_j=x_j+\rho SQ$, and $4Q_j=x_j+4\rho SQ$.
The localized amplitude $W_{\ell,j}$ satisfies
\[
 \supp W_{\ell,j}\subseteq\supp\chi_j\cap\supp W_\ell
 \subset\{x:|S^{-1}(x-x_j)|<\rho\}
 \subset\operatorname{int}Q_j.
\]
The box $4Q_j$ is the dilation of $Q_j$ about $x_j$ by a factor of
four and provides a neighborhood of $\supp W_{\ell,j}$.
Since $\rho+|S^{-1}\ell|=o(L^{-B})$ for every fixed $B$,
each retained index $j$ satisfies
\[
 \inf_{x'\in\supp W}|S^{-1}(x-x')|<\tfrac12L^{-B_*}
 \qquad(x\in4Q_j).
\]
Every segment $x+t\ell$, $x\in4Q_j$, $0\le t\le1$, lies in the
neighborhood \eqref{first:legal} of $\supp W$.

For a dominant difference, \eqref{main:hessian} and
\eqref{main:3dlower} give the inverse Hessian bound below, while
the third derivative bounds in \eqref{main:shiftbounds} give its
variation:
\[
 \|(S\nabla^2\ph(x_j)S)^{-1}\|\ll_LF_\ell^{-1},\qquad
 \sup_{x\in4Q_j}\|S\nabla^2\ph(x)S-S\nabla^2\ph(x_j)S\|
 \ll_LF_\ell\rho.
\]
For $x,x'\in4Q_j$, convexity keeps the segment
$x'+t(x-x')$, $0\le t\le1$, inside $4Q_j$.
The fundamental theorem of calculus gives
\[
 \begin{aligned}
 &\nabla\ph(x)-\nabla\ph(x')-\nabla^2\ph(x_j)(x-x')\\
 &\quad=\int_0^1
 \bigl[\nabla^2\ph(x'+t(x-x'))-\nabla^2\ph(x_j)\bigr]
 (x-x')\,\dd t.
 \end{aligned}
\]
Multiply by $S^{-1}(\nabla^2\ph(x_j))^{-1}$ and insert
$SS^{-1}$ before $x-x'$. The product of the preceding inverse
and variation bounds is $O_L(\rho)=o(1)$. Since the Euclidean
and maximum norms are equivalent in dimension three, for sufficiently
large $N$ this yields
\begin{equation}\label{eq:local-gradient-comparison}
 \left|S^{-1}\left[
 (\nabla^2\ph(x_j))^{-1}(\nabla\ph(x)-\nabla\ph(x'))
 -(x-x')\right]\right|_\infty
 \le\tfrac14|S^{-1}(x-x')|_\infty.
\end{equation}
If $\nabla\ph(x)=\nabla\ph(x')$, this inequality forces $x=x'$.
Thus $\nabla\ph$ is injective on $4Q_j$.

\subsection{\texorpdfstring{Bounding $C_\ell$ for $a$-dominant shifts}{Bounding the correlation sum for a-dominant shifts}}

\begin{proposition}\label{prop:main-correlations}
For every $a$-dominant shift $\ell$,
\begin{equation}\label{eq:convex-final-global}
 |C_\ell|\ll_L q^{1/2}F_\ell^{7/6}
                  +\rho^{-1/2}F_\ell^{4/3}.
\end{equation}
\end{proposition}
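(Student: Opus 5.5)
We will prove \eqref{eq:convex-final-global} one piece $C_{\ell,j}$ at a time, using the partition of unity \eqref{main:partition}, and then sum over the $O_L(\rho^{-3})$ retained indices $j$. On each box $4Q_j$ the map $\nabla\ph$ is injective by \eqref{eq:local-gradient-comparison}. Its normalized Hessian $S\nabla^2\ph S$ has size $\asymp_L F_\ell$ by \eqref{main:hessian} and determinant $\asymp_L F_\ell^3$ by \eqref{main:3dlower}. So the hypotheses of Proposition~\ref{prop:multidimensional-B} hold with $m=3$, scale matrix $\rho S$, and $T\asymp_L\rho^2F_\ell$ (after normalizing by $\rho S$). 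The amplitude derivative bounds \eqref{main:cutoffbounds} supply the required amplitude input. The center $\xi$ is taken to be the nearest integer vector to $\nabla\ph(x_j)$.

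Applying Poisson summation and stationary phase (the $B$-process) gives
\[
 C_{\ell,j}=\ee(\sigma_j/8)\sum_{k}\frac{W_{\ell,j}(y_k)}{\sqrt{|\det\nabla^2\ph(y_k)|}}\,\ee\bigl(\ph(y_k)-k\cdot y_k\bigr)+\text{error}.
\]
Here $k$ runs over integer points of $\nabla\ph(4Q_j)$, a box of side $\asymp_L\rho F_\ell/q$ in the first coordinate and $\rho F_\ell/\beta$ in the others. The factor $|\det\nabla^2\ph|^{-1/2}\asymp_L q^3\sqrt D/(N^2|\ell_1|^{3/2})$ comes from \eqref{main:3dlower}. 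The dual phase $\Phi^*(k)=\ph(y_k)-k\cdot y_k$ is a Legendre transform. We will not estimate the three-dimensional dual sum trivially. Instead we fix the two transverse dual variables $(k_2,k_3)$ and apply the third-derivative test (Robert~\cite[Section~3.3]{Robert}, as already used in \eqref{eq:metric-frequency}) to the sum over $k_1$. This requires a lower bound for $\partial_{k_1}^3\Phi^*$. It follows from the Legendre identities: $\partial^2\Phi^*=-(\nabla^2\ph)^{-1}$, and $\partial^3\Phi^*$ is expressed through $\nabla^3\ph$ contracted with $(\nabla^2\ph)^{-1}$. The leading contribution comes from \eqref{main:thirdlower}, $q^3|\partial_a^3\ph|\asymp_LF_\ell$. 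For $a$-dominant $\ell$ the Hessian is essentially diagonal-dominant in the $a$-direction by Lemma~\ref{main:combination} with $|\tau_2|+|\tau_3|=o(|\tau_1|)$, so we expect $|\partial_{k_1}^3\Phi^*|\asymp_L q^3\cdot F_\ell/(q^3)\cdot(q^2/F_\ell)^3=q^6/F_\ell^2$ in the natural normalization. The third-derivative test then gives, per transverse pair, a bound $\lambda_3^{1/6}M+M\lambda_3^{-1/3}$ with $M\asymp\rho F_\ell/q$ the $k_1$-length. This is then multiplied by the number of transverse pairs, by the amplitude size $|\det|^{-1/2}$, and by $\rho^{-3}$ for the $j$-sum. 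Since all variables vary smoothly, partial summation handles the amplitude.

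The $B$-process error in \eqref{eq:general-B-error} contributes roughly $\rho^{-3}\cdot(\text{dual box count})\cdot T^{-3/2}$-type terms. Collecting everything should produce the term $\rho^{-1/2}F_\ell^{4/3}$ (from the $M\lambda_3^{-1/3}$ part together with the error) and the term $q^{1/2}F_\ell^{7/6}$ (from $\lambda_3^{1/6}M$). Bookkeeping of the powers of $q,\beta,D,N$ should collapse via $\beta=qDL/N$ and $F_\ell\asymp D|\ell_1|$.

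The main obstacle is the lower bound for the third derivative of the dual phase in the $k_1$ direction, uniformly over the local box. This requires showing that the cross terms $\partial_a^2\partial_b\ph$ and $\partial_a^2\partial_s\ph$, weighted by the off-diagonal entries of $(\nabla^2\ph)^{-1}$, cannot cancel the main term $\partial_a^3\ph$. The first approach to try is to work in the frame $V_0$, where $\mathsf B_\ell\approx\tau_1T_1$ has the block form of \eqref{main:Tmatrices}. In that frame the $(1,1)$-block decouples up to $o(1)$, and the cubic dual derivative equals $-T^{(3)}_{111}/(\tau_1A_{11,0})^3$ plus lower-order terms, which is nonzero because $A_{11,0}\ge1$ and \eqref{main:purefourth} holds. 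The remaining difficulty is to check that the dual $k_1$-lines are not tangent to this frame. This should follow from $A_{12,0},A_{13,0}=o(1)$.
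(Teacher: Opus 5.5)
\textbf{The gap.} The lower bound for $|\partial_{k_1}^3\Phi^*|$ is the step you single out as the main obstacle, and the proposed resolution does not work. The available estimate $q^3|\partial_a^3\ph|\asymp_LF_\ell$ controls $\partial_\nu^3\Phi^*=\nabla^3\ph[(\nabla^2\ph)^{-1}\nu]^3$ only for $\nu$ proportional to $\nabla^2\ph(x_j)e_1$, for which $(\nabla^2\ph)^{-1}\nu\propto e_1=(1,0,0)^T$. It says nothing directly about $\nu=e_1$.

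The frame does not rescue this. The dual direction $\partial_{k_1}$ pulls back to $(\nabla^2\ph)^{-1}e_1=\ps(x_0)^{-1}V_0\mathsf B_\ell^{-1}V_0^Te_1$, and by \eqref{main:frame} and \eqref{app:elementary-scales}
\[
 V_0^Te_1=\frac q{z_0}\bigl(1,-\gamma_0,-\gamma_0\bigr)^T,\qquad
 \gamma_0=\sqrt{\frac{Lh_0}{v_0z_0}}\asymp\sqrt{\frac L{1+z_0}}\longrightarrow\infty .
\]
So although $\mathsf B_\ell\approx\tau_1T_1$ is block diagonal, the pulled-back $k_1$-direction has frame coefficients dominated by $V_{2,0},V_{3,0}$, not $V_{1,0}$. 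Consequently $\partial_{k_1}^3\Phi^*$ involves, with the largest weights, frame third derivatives of $\ph$ with two or three transverse indices. These are transverse fourth-order derivatives of $\ps$, which nothing in the paper controls to the needed precision (only $\partial_a^4\ps$ is computed, in \eqref{main:purefourth}). The $o(1)$ entries in \eqref{main:Tmatrices} get multiplied by powers of $\gamma_0$ instead of being negligible. The smallness of $A_{12,0},A_{13,0}$ is irrelevant to this change of basis.

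Equivalently, the dual direction where your decoupling is valid is $\nabla^2\ph(x_j)e_1$. Entrywise up to factors $1+o(1)$, it is proportional to $(1,n_0/d_0,h_0/d_0)$, the covector annihilating $V_{2,0}$ and $V_{3,0}$. After normalization by $S$ its entries are comparable only up to powers of $L$: the ratio $\beta n_0/(qd_0)\asymp PL/N$ is $\asymp L$ when $P\asymp N$. It is also in general not a lattice direction. Nothing excludes that the $u_1^3$-coefficient of the relevant cubic is small, so your leading term $-T^{(3)}_{111}/(\tau_1A_{11,0})^3$ is unjustified.

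\textbf{The missing idea: choose the slicing direction adaptively.} Let $\mathcal P_j(u)=\nabla^3\ph(x_j)[(\nabla^2\ph(x_j))^{-1}S^{-1}u]^3$, so that $\partial_\nu^3\Phi^*$ at the centre equals $\mathcal P_j(S\nu)$.
\begin{itemize}
\item The coefficients of $\mathcal P_j$ are $O_L(F_\ell^{-2})$. Its value at $u=qS\nabla^2\ph(x_j)e_1$, a vector of norm $O_L(F_\ell)$, is $q^3\partial_a^3\ph(x_j)$, so some coefficient is $\gg_LF_\ell^{-2}$.
\item Take $J=\lceil N/(DL)\rceil$, so $J\beta\asymp q$. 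Setting $u_1=1$ keeps all coefficients by homogeneity. Lagrange interpolation on the grid $(J\beta/q)\{0,1,2,3\}^2$ then gives some $\nu_j\in\{(1,Jr,Js):0\le r,s\le3\}$ with $|\partial_{\nu_j}^3\Phi^*|\asymp_Lq^3F_\ell^{-2}$. This is stable on the local dual region, since the relative variation there is $O_L(\rho)$.
\item Because $\nu_j$ has first coordinate $1$, $\ZZ^3$ still splits into lattice lines in direction $\nu_j$.
\item Take the dual region to be a convex parallelepiped $\Pi_j$ with $\nabla\ph(Q_j)\subset\Pi_j\subset\nabla\ph(\operatorname{int}4Q_j)$; the image $\nabla\ph(4Q_j)$ itself need not be convex. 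Then each line meets $\Pi_j$ in one interval of length $\le2\mathcal X$, with $\mathcal X\asymp_LF_\ell\rho/q$, and $\ll J^2\mathcal X^2$ lines occur. Your counting is therefore unaffected.
\end{itemize}

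\textbf{Two bookkeeping slips.} The third derivative has size $\lambda_j\asymp_Lq^3F_\ell^{-2}$, not $q^6F_\ell^{-2}$. The third derivative test gives $\mathcal X\lambda_j^{1/6}+\mathcal X^{1/2}\lambda_j^{-1/6}$, not $\mathcal X\lambda_j^{1/6}+\mathcal X\lambda_j^{-1/3}$. With the amplitude bound $q\beta^2F_\ell^{-3/2}$ (supremum plus variation along $\nu_j$), the correct form gives $\rho^3q^{1/2}F_\ell^{7/6}+\rho^{5/2}F_\ell^{4/3}$ per box. Your second term would instead give $\rho^3F_\ell^{13/6}/q$. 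The $B$-process error $\rho F_\ell^{1/2}$ per box is absorbed because $F_\ell\rho^2\ge1$. Summing over the $O_L(\rho^{-3})$ boxes then yields \eqref{eq:convex-final-global}.
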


\begin{proof}
Fix a retained index $j$ and put $F=F_\ell$.
We work in the original coordinates $x=(a,b,s)$ on $4Q_j$.
The preceding estimates give
\begin{equation}\label{eq:convex-inputs}
 \begin{gathered}
 q^{\alpha_1}\beta^{\alpha_2+\alpha_3}
 \|\partial^\alpha\ph\|_{L^\infty(4Q_j)}
 \ll_L F\quad(2\le|\alpha|\le12),\\
 \|(S\nabla^2\ph(x_j)S)^{-1}\|\ll_L F^{-1},
 \qquad q^3|\partial_a^3\ph(x_j)|\gg_L F,\\
 \rho^{|\alpha|}q^{\alpha_1}\beta^{\alpha_2+\alpha_3}
 \|\partial^\alpha W_{\ell,j}\|_{L^\infty(4Q_j)}\ll_L1
  \quad(|\alpha|\le16).
 \end{gathered}
\end{equation}
Here $\partial^\alpha=\partial_a^{\alpha_1}
\partial_b^{\alpha_2}\partial_s^{\alpha_3}$.
The third derivative lower bound follows from \eqref{main:thirdlower}.
We shall use
\begin{equation}\label{eq:convex-large-scales}
 F\rho/q\gg_L N^{11/60-\varepsilon},\qquad
 F\rho^2\gg_L N^{3/5},\qquad \beta\ll q.
\end{equation}

For the third derivative test below, we need a convex frequency
region on a neighborhood of which $\nabla\ph$ has a smooth inverse.
Set $k_j=\nabla\ph(x_j)$ and
\begin{equation}\label{eq:convex-Pi}
 \Pi_j=k_j+\rho\nabla^2\ph(x_j)S(2Q).
\end{equation}
The quantitative comparison \eqref{eq:local-gradient-comparison}
provides both the required containment and the inverse map.
Taking $x'=x_j$ and $x\in Q_j$, we obtain
\[
 |S^{-1}(\nabla^2\ph(x_j))^{-1}(\nabla\ph(x)-k_j)|_\infty
 \le5\rho/4<2\rho,
\]
so $\nabla\ph(Q_j)\subset\operatorname{int}\Pi_j$.
For $k\in\Pi_j$, the same two-point estimate shows that
\[
 x\longmapsto x-(\nabla^2\ph(x_j))^{-1}(\nabla\ph(x)-k)
\]
is a contraction with constant $1/4$ in the distance
$|S^{-1}(x-x')|_\infty$. Its value at $x_j$ is within $2\rho$
of $x_j$ in this distance, so it maps $4Q_j$ into
$x_j+3\rho SQ\subset\operatorname{int}4Q_j$.
The contraction mapping theorem gives a solution of $\nabla\ph(x)=k$.
By the injectivity proved above and the nondegeneracy of the Hessian,
the inverse $x(k)$ is unique and smooth on the open set
$\nabla\ph(\operatorname{int}4Q_j)$, which contains $\Pi_j$.
Thus
\begin{equation}\label{eq:convex-sandwich}
 \nabla\ph(Q_j)\subset\operatorname{int}\Pi_j,
 \qquad
 \Pi_j\subset\nabla\ph(\operatorname{int}4Q_j).
\end{equation}
On this open set, define
\begin{equation}\label{eq:convex-dual}
 \Theta_j(k)=\ph(x(k))-k\cdot x(k),\qquad
 \widetilde W_{\ell,j}(k)=
 \frac{W_{\ell,j}(x(k))\ee(\sigma_j/8)}
 {\sqrt{|\det\nabla^2\ph(x(k))|}},
\end{equation}
where $\sigma_j$ is the constant signature of the Hessian on $4Q_j$.
The dual amplitude is compactly supported in
$\operatorname{int}\Pi_j$ and extends smoothly by zero to $\mathbb R^3$.

Apply Proposition~\ref{prop:multidimensional-B} with phase $\ph$,
amplitude $W_{\ell,j}$, and
\[
 m=3,\qquad c=x_j,\qquad \mathsf X=\rho S,\qquad
 T=F\rho^2,\qquad \xi=k_j.
\]
The normalization in that proposition is the single substitution
$x=x_j+\rho Su$. The amplitude is supported in
$x_j+\rho S\operatorname{int}Q$, with a fixed normalized margin
inside $4Q_j$.
The derivative bounds in \eqref{eq:convex-inputs}, together with the
Hessian variation bound, give the required normalized derivative
and inverse Hessian bounds on $4Q$.
Injectivity was proved in \S\ref{sec:partition}.
The centered gradient condition follows from
\eqref{eq:local-gradient-comparison} and \eqref{main:hessian}:
\[
 |\rho S(\nabla\ph(x)-k_j)|_\infty\ll_L F\rho^2=T
 \qquad(x\in Q_j).
\]
By \eqref{eq:convex-sandwich}, the transformed amplitude vanishes
outside $\Pi_j$. Hence \eqref{eq:general-B-formula} gives
\begin{equation}\label{eq:convex-remainder}
 C_{\ell,j}=
 \sum_{k\in\Pi_j\cap\mathbb Z^3}
       \widetilde W_{\ell,j}(k)\ee(\Theta_j(k))
       +\mathcal E_{\ell,j}.
\end{equation}
The general error bound \eqref{eq:general-B-error} gives
\begin{align}
 |\mathcal E_{\ell,j}|
 &\ll_L q\beta^2\rho^3T^{-5/2}
       (1+F\rho/q)(1+F\rho/\beta)^2
       \label{eq:secondB-general-error}\\
 &\ll_L\rho F^{1/2}.
       \label{eq:secondB-error}
\end{align}
Indeed, \eqref{eq:convex-large-scales} implies that both
$F\rho/q$ and $F\rho/\beta$ exceed one for sufficiently large $N$.
Thus the frequency factor is $O_L(F^3\rho^3/(q\beta^2))$;
multiplying by $q\beta^2\rho^3(F\rho^2)^{-5/2}$ gives
$\rho F^{1/2}$.

Let $J=\lceil N/(DL)\rceil$, so that $J\beta\asymp q$, and set
\[
 \mathcal V=\{(1,Jr,Js):r,s\in\{0,1,2,3\}\}.
\]
Differentiating $\nabla\ph(x(k))=k$ and the definition of
$\Theta_j$ gives
\begin{equation}\label{eq:convex-third-identity}
 \begin{aligned}
 \partial_\nu x(k)&=(\nabla^2\ph(x(k)))^{-1}\nu,\\
 \partial_\nu^3\Theta_j(k)
 &=\nabla^3\ph(x(k))
       [(\nabla^2\ph(x(k)))^{-1}\nu]^3.
 \end{aligned}
\end{equation}
The notation $B[v]^3$ means $B[v,v,v]$ for a trilinear form $B$.
The homogeneous cubic
\[
 \mathcal P_j(u)=\nabla^3\ph(x_j)
       [(\nabla^2\ph(x_j))^{-1}S^{-1}u]^3
\]
has largest coefficient of size $\asymp_L F^{-2}$.
The upper bound follows from the weighted third derivative bounds
and the normalized inverse Hessian bound in \eqref{eq:convex-inputs}.
For the lower bound, evaluate at
$u=qS\nabla^2\ph(x_j)(1,0,0)^T$.
This vector has norm $O_L(F)$, and
$\mathcal P_j(u)=q^3\partial_a^3\ph(x_j)$ has absolute value $\gg_LF$.
Since $\mathcal P_j$ is homogeneous of degree three, at least one coefficient
has absolute value $\gg_LF^{-2}$.
For $\nu=(1,Jr,Js)\in\mathcal V$, homogeneity gives
\[
 \partial_\nu^3\Theta_j(k_j)
 =\mathcal P_j(S\nu)=q^3\mathcal P_j(1,J\beta r/q,J\beta s/q).
\]
The restriction $\mathcal P_j(1,u_2,u_3)$ retains every coefficient:
homogeneity gives a one-to-one correspondence between
$u_1^{3-a-b}u_2^au_3^b$ and $u_2^au_3^b$ for $a,b\ge0$, $a+b\le3$.
Interpolation in each variable at the four points
$0,J\beta/q,2J\beta/q,3J\beta/q$ gives
\[
 \max_{|\alpha|=3}|[u^\alpha]\mathcal P_j|
 \ll\max_{r,s\in\{0,1,2,3\}}
       |\mathcal P_j(1,J\beta r/q,J\beta s/q)|.
\]
Here $[u^\alpha]\mathcal P_j$ denotes the coefficient of $u^\alpha$.
The constant is uniform since $1\le J\beta/q\le1+C$.
Thus one $\nu_j\in\mathcal V$ satisfies
\[
 |\partial_{\nu_j}^3\Theta_j(k_j)|\asymp_L q^3F^{-2}.
\]

For $x\in4Q_j$, the mean value theorem and the inverse matrix
identity give
\[
 \begin{gathered}
 \max_{|\alpha|=3}q^{\alpha_1}\beta^{\alpha_2+\alpha_3}
 |\partial^\alpha\ph(x)-\partial^\alpha\ph(x_j)|
 \ll_L\rho F,\\
 \|(S\nabla^2\ph(x)S)^{-1}
       -(S\nabla^2\ph(x_j)S)^{-1}\|
 \ll_L\rho F^{-1}.
 \end{gathered}
\]
Thus the derivative in \eqref{eq:convex-third-identity} varies by
$O_L(\rho q^3F^{-2})$. For sufficiently large $N$, this is at most
half its absolute value at $k_j$, since $\rho L^B\to0$ for every
fixed $B$. Put
\[
 \lambda_j=\tfrac12|\partial_{\nu_j}^3\Theta_j(k_j)|.
\]
The chosen direction therefore satisfies throughout $\Pi_j$
\begin{equation}\label{eq:convex-direction-lower}
 \lambda_j\le |\partial_{\nu_j}^3\Theta_j(k)|\le3\lambda_j,
 \qquad \lambda_j\asymp_L q^3F^{-2}.
\end{equation}
Write $\nu_j=(1,Jr,Js)$. Each $k\in\mathbb Z^3$ has the unique form
\[
 k=(t,m_2+Jrt,m_3+Jst),\qquad
 t=k_1,\quad m_2=k_2-Jrk_1,\quad m_3=k_3-Jsk_1.
\]
For fixed $m_2,m_3\in\mathbb Z$, let
\[
 I_{m_2,m_3}
 =\{t\in\mathbb R:(t,m_2+Jrt,m_3+Jst)\in\Pi_j\}.
\]
Since $\Pi_j$ is a convex parallelepiped, each nonempty set
$I_{m_2,m_3}$ is a closed interval, possibly a singleton.
These lines partition $\Pi_j\cap\mathbb Z^3$.
By \eqref{eq:convex-Pi},
\[
 \Pi_j\subset k_j+\bigl(
 [-\mathcal X,\mathcal X]\times[-J\mathcal X,J\mathcal X]^2\bigr),
 \qquad \mathcal X\asymp_L F\rho/q\ge1.
\]
Thus
\begin{equation}\label{eq:convex-lines-count}
 \#\{(m_2,m_3):I_{m_2,m_3}\ne\varnothing\}\ll J^2\mathcal X^2,
 \qquad |I_{m_2,m_3}|\le2\mathcal X.
\end{equation}

Using $\det S=q\beta^2$, we have
\[
 \widetilde W_{\ell,j}(k)
 =q\beta^2 W_{\ell,j}(x(k))
        |\det(S\nabla^2\ph(x(k))S)|^{-1/2}\ee(\sigma_j/8).
\]
Jacobi's identity
\[
 \dd\log|\det(S\nabla^2\ph S)|
 =\operatorname{tr}\!\left((S\nabla^2\ph S)^{-1}\dd(S\nabla^2\ph S)\right)
\]
and \eqref{eq:convex-inputs}, \eqref{main:3dlower} show that the
determinant factor and its derivatives under
$q\partial_a$, $\beta\partial_b$, and $\beta\partial_s$ are
$O_L(F^{-3/2})$ on $4Q_j$.
The inverse-map identity and the chain rule give
\[
 \begin{aligned}
 |S^{-1}\partial_{\nu_j}x(k)|
 &=|(S\nabla^2\ph(x(k))S)^{-1}S\nu_j|\ll_L q/F,\\
 |\partial_{\nu_j}(W_{\ell,j}(x(k)))|
 &\ll_L\rho^{-1}|S^{-1}\partial_{\nu_j}x(k)|
 \ll_L q/(F\rho).
 \end{aligned}
\]
The product rule therefore yields
\[
 \|\widetilde W_{\ell,j}\|_{L^\infty(\Pi_j)}
 +\frac{F\rho}{q}\|\partial_{\nu_j}\widetilde W_{\ell,j}\|_{L^\infty(\Pi_j)}
 \ll_L q\beta^2 F^{-3/2}.
\]
For $t\in I_{m_2,m_3}$, define
\[
 b_{m_2,m_3}(t)=
 \widetilde W_{\ell,j}(t,m_2+Jrt,m_3+Jst).
\]
This restriction satisfies
\begin{equation}\label{eq:convex-variation}
 \|b_{m_2,m_3}\|_{L^\infty(I_{m_2,m_3})}+
 \int_{I_{m_2,m_3}}|b'_{m_2,m_3}(t)|\,dt
 \ll_L q\beta^2F^{-3/2}.
\end{equation}
The third derivative test (see Robert~\cite[Section~3.3]{Robert}) gives the bound
\[
 O\bigl(\mathcal X\lambda_j^{1/6}+\mathcal X^{1/2}\lambda_j^{-1/6}\bigr)
\]
on the integer points of every subinterval of $I_{m_2,m_3}$.
Abel summation, followed by
\eqref{eq:convex-lines-count}, now yields
\begin{align}\label{eq:convex-main-bound}
 \left|\sum_{k\in\Pi_j\cap\mathbb Z^3}
       \widetilde W_{\ell,j}(k)\ee(\Theta_j(k))\right|
 &\ll_L q\beta^2 F^{-3/2}J^2\mathcal X^2
     \bigl(\mathcal X\lambda_j^{1/6}+\mathcal X^{1/2}\lambda_j^{-1/6}\bigr)
     \notag\\
 &\ll_L\rho^3q^{1/2}F^{7/6}+\rho^{5/2}F^{4/3}.
\end{align}
Since $F\rho^2\ge1$, the error in \eqref{eq:secondB-error} is
absorbed by the second term. Hence
\begin{equation}\label{eq:convex-final-local}
 |C_{\ell,j}|\ll_L
       \rho^3q^{1/2}F_\ell^{7/6}+\rho^{5/2}F_\ell^{4/3}.
\end{equation}
There are $O_L(\rho^{-3})$ retained indices $j$ in
\eqref{main:partition}. Summing $C_{\ell,j}$ over these indices proves
\eqref{eq:convex-final-global}.
\end{proof}

\section{The remaining shifts}\label{sec:remaining-shifts}

\subsection{A two-dimensional estimate}\label{sec:two-dimensional}

We use the partition from \S\ref{sec:partition}.
For every nonzero shift $\ell$ and each retained box $Q_j$, choose
at $x_j$ the $(a,b)$ or $(a,s)$ principal Hessian block whose
determinant satisfies \eqref{main:2dlower}.
By \eqref{main:shiftbounds}, the same block remains invertible
throughout $4Q_j$: its normalized inverse is $O_L(F_\ell^{-1})$,
and its relative variation is $O_L(\rho)$.
For the $(a,b)$ block, fix $s=s_0\in\mathbb Z$ with
$|s_0-(x_j)_3|\le\rho\beta$; for the $(a,s)$ block, fix
$b=b_0\in\mathbb Z$ with $|b_0-(x_j)_2|\le\rho\beta$.
The two varying coordinates have scales $q,\beta$, and the slice
centre has these coordinates equal to those of $x_j$.
The segment argument in \S\ref{sec:partition} shows that the
restricted gradient is injective on the corresponding slice of
$4Q_j$, uniformly in the fixed coordinate.

Apply Proposition~\ref{prop:multidimensional-B}, in the form
\eqref{eq:general-B-bound}, to each slice with
\[
 m=2,\qquad \mathsf X=\diag(\rho q,\rho\beta),
 \qquad T=F_\ell\rho^2.
\]
The frequency centre is the restricted gradient of $\ph$ at the
slice centre. After rescaling, the amplitude has $C^{16}$ norm
$O_L(1)$ and is supported in $\operatorname{int}[-1,1]^2$.
The phase divided by $T$ has derivatives of orders $2$ through $12$
and inverse Hessian $O_L(1)$ on $4[-1,1]^2$.
These bounds follow from \eqref{main:shiftbounds} and
\eqref{main:cutoffbounds}; the first derivative bound relative to
the frequency centre follows by integrating the Hessian.
The parameter bounds and $F_\ell\gg D$ give
\[
 \rho\beta>N^{1/4}L,\qquad
 \frac{F_\ell\rho}{\beta}\gg
       \frac{N^{1/12-\varepsilon}}L,\qquad
 T\gg N^{1/3-\varepsilon}.
\]
There are $O(1+\rho\beta)$ choices of the fixed coordinate.
Summing the bound for the slices gives
\begin{equation}\label{eq:secondB-2d-entry}
 |C_{\ell,j}|\ll_L
 (1+\rho\beta)\frac{q\beta}{F_\ell}
 (1+F_\ell\rho/q)(1+F_\ell\rho/\beta).
\end{equation}
Using $\rho\beta\gg1$ and $F_\ell\rho/\beta\gg1$, while retaining
the factor $1+F_\ell\rho/q$, we obtain
\begin{equation}\label{eq:secondB-2d-simplified}
 |C_{\ell,j}|\ll_L q\beta\rho^2+\beta F_\ell\rho^3.
\end{equation}

\subsection{The average over shifts}

Let $\mathcal E$ consist of the nonzero shifts in
$\mathcal R-\mathcal R$ for which
\[
 |\ell_1|<L^{K_*}\frac{N^{11/10}}{DL}.
\]
Fix
\[
 M=C L^{K_*+1}N^{11/10},\qquad \delta=\frac{M}{\beta^2},
\]
where $C$ is chosen so that $F_\ell\le M$ for $\ell\in\mathcal E$.
The partition in \S\ref{sec:partition} has $O_L(\rho^{-3})$ centres,
independent of $\ell$.

\begin{lemma}\label{lem:remaining-shift-average}
We have
\begin{equation}\label{eq:remaining-shift-average}
 \frac1{\#\mathcal R}\sum_{\ell\in\mathcal E}|C_\ell|
 \ll_L D\beta N^{-1/25}+\beta^{1/3}N^{107/75}.
\end{equation}
\end{lemma}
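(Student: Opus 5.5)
For each retained box $Q_j$ and each $\ell\in\mathcal E$ I will choose between two bounds for $C_{\ell,j}$: the two-dimensional bound \eqref{eq:secondB-2d-simplified}, which holds for every shift, and a three-dimensional $B$-process bound that needs a large Hessian determinant at $x_j$. Then I will sum over $j$, average over $\ell\in\mathcal E$, and optimize. The first term $D\beta N^{-1/25}$ should come from the two-dimensional estimate. Summing \eqref{eq:secondB-2d-simplified} over the $O_L(\rho^{-3})$ boxes gives $q\beta\rho^{-1}+\beta F_\ell$. With $F_\ell\le M$, this is at most $\beta M$ per shift, and $\#\mathcal E\le\#(\mathcal R-\mathcal R)\ll_L N^{67/50}/(DL)$. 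This crude route loses too much if applied to every box, so it must only be used on a small exceptional set of boxes.

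Here is the dichotomy. At $x_j$, put $\Delta_j(\ell)=\det(S\nabla^2\ph(x_j)S)$. By the Taylor expansion \eqref{main:taylor} in the frame $V_0$ at $x_j$, $\Delta_j(\ell)$ equals, up to an $O_L(1)$ factor and $o(|\tau|^3)$, the cubic $\det(\sum_k\tau_kT_k)$ from the proof of Lemma~\ref{main:combination}. That cubic is $c_0\tau_1 Q(\tau)$ with $Q$ an indefinite quadratic form, and $\tau$ depends linearly on $\ell$. Using the threshold $\delta=M/\beta^2$, I will call $(\ell,j)$ \emph{good} if $|\Delta_j(\ell)|\ge\delta F_\ell^2$-type, i.e.\ the determinant is large relative to $F_\ell^3$ by a factor governed by $\delta$. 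On good pairs I will apply Proposition~\ref{prop:multidimensional-B} in three dimensions as in \eqref{eq:convex-remainder}--\eqref{eq:secondB-error}, but only the trivial bound on the dual sum. This costs $|\det|^{-1/2}$ times the number of dual frequencies, roughly $q\beta^2\rho^3(F\rho/q)(F\rho/\beta)^2|\Delta|^{-1/2}$, and I have to verify the $B$-process hypotheses when the inverse Hessian is only bounded by $|\Delta|^{-1}F^2$. On bad pairs I will use the two-dimensional bound.

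Then I will count bad pairs. For fixed $j$, the condition that $|\tau_1Q(\tau)|$ is small confines $\ell$ to a neighbourhood of the union of a plane and a quadric cone in the box $\mathcal R-\mathcal R$. The lattice points there are a proportion $\ll_L\delta^{\theta}$ of $\#\mathcal R$ for a suitable exponent $\theta$. This is where averaging over shifts, rather than working with one fixed $\ell$, is essential.

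Finally, I will combine the two contributions, divide by $\#\mathcal R$, and substitute $\rho=N^{-1/4}$, $M\asymp_L N^{11/10}$ and $qD>N^{3/2}$. The bad-shift part should produce $D\beta N^{-1/25}$, and the good-shift part $\beta^{1/3}N^{107/75}$; the exponent $1/3$ suggests balancing a cube-root loss from $|\Delta|^{-1/2}$ against the measure $\delta$ of the degenerate set. The main obstacle is the counting step. The degenerate set moves with $j$ through $x_j$-dependent frame coefficients, and its lattice-point count must be uniform across boxes and valid down to the discreteness scale of $\ell_2,\ell_3\le 2N^{1/10}$. I would first try to treat the factor $\tau_1$ and the quadratic factor $Q(\tau)$ separately, counting for each fixed $(\ell_2,\ell_3)$ the $\ell_1$ in an interval whose length is controlled by $\delta$.
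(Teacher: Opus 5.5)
Your pointwise two-case scheme matches the paper. The two-dimensional bound caps $|C_{\ell,j}|$ by $\rho^3\beta M$. When the normalized determinant $p$ at $x_j$ exceeds $\delta=M/\beta^2$, a three-dimensional $B$-process using only $|\det|^{-1/2}$ per dual frequency gives $\rho^3M^{3/2}p^{-1/2}$. That step needs part (ii) of Proposition~\ref{prop:multidimensional-B}, not the route of \eqref{eq:convex-remainder}, whose inverse-Hessian hypothesis fails for power-small $p$. Part (ii) in turn needs $\rho/\delta=o(1)$, which holds since $\delta\gg_L N^{-7/30-2\varepsilon}$.

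The gap is in the averaging over shifts, which is the substance of the lemma, and your plan for it would not work. First, the factorization $c_0\tau_1Q(\tau)$ describes the frozen determinant only up to $o(|\tau|^3)$, and this $o(1)$ is merely a negative power of $L$. It comes from $A_{12,0}$, $A_{13,0}$ and the derivatives $\partial_kA_{ij}$ in \eqref{main:Abounds}; for instance $A_{12,0}\asymp(L(1+z_0))^{-1/2}$ by \eqref{mat:baseentries}. Since $\delta$ is a negative power of $N$, the plane $\tau_1=0$ and the cone $Q=0$ do not localize the set where the determinant falls below $\delta M^3$. The true cubic need not factor, so treating $\tau_1$ and $Q$ separately gives no control at scale $\delta$. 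Second, bounding the proportion of bad pairs by $\delta^\theta$ is the wrong shape of estimate. On good pairs the bound depends on $p$, so you must sum $p^{-1/2}$ rather than count. The bad set also contains $O(1)$ values of $\ell_1$ near each root on every line, a part not of size $\delta^\theta$; this is exactly the source of $D\beta N^{-1/25}$.

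The missing idea is to freeze the exact determinant at $x_j$ as a homogeneous cubic in $\ell$ itself:
\[
P_j(\ell)=M^{-3}\det\bigl(S[(\ell\cdot\nabla)\nabla^2\bar\Psi](x_j)S\bigr).
\]
Normalize by $M^3$ rather than $F_\ell^3$, so that for fixed $(\ell_2,\ell_3)$ it is a genuine polynomial in $\ell_1$. The only structure needed is $|[\ell_1^3]P_j|\gg_L(D/M)^3$, uniformly in $j$. This follows from $S(\partial_a\nabla^2\bar\Psi)(x_j)S=-d_0SV_0^{-T}T_1V_0^{-1}S$ and $|\det T_1|\gg1$. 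Unlike the factorization, this survives the $o(1)$ perturbations: take $r=(1,0,0)$ in \eqref{main:combination3}. With $U=\beta/\sqrt M$ one has $|C_{\ell,j}|\ll_L\rho^3M^{3/2}\min\{U,p^{-1/2}\}$. For a cubic $P$ with leading coefficient $c$ there is the elementary bound $\sum_{k\in\mathbb Z}\min\{U,|P(k)|^{-1/2}\}\ll U+|c|^{-1/3}U^{1/3}$. It follows from $|P(t)|\ge|c|\prod_r|t-\operatorname{Re}z_r|$, so it does not care where the roots are or how multiple they are. It gives $\sum_{\ell_1}|C_{\ell,j}|\ll_L\rho^3M^{3/2}\{U+(M/D)U^{1/3}\}$. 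Summing over the $O(N^{1/5})$ pairs $(\ell_2,\ell_3)$ and $O_L(\rho^{-3})$ boxes, then dividing by $\#\mathcal R\asymp N^{67/50}/(DL)$, yields exactly $D\beta N^{-1/25}+\beta^{1/3}N^{107/75}$. This also removes your concern about uniformity of the $j$-dependent degenerate set, since its location is never used.
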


\begin{proof}
Fix a retained index $j$ in \eqref{main:partition}.
At the centre $x_j$ of $Q_j$, define the homogeneous cubic polynomial
\begin{equation}\label{eq:remaining-cubic}
 P_j(\ell)=M^{-3}\det\!\left(
 S\bigl[\ell_1\partial_a\nabla^2\bar\Psi
       +\ell_2\partial_b\nabla^2\bar\Psi
       +\ell_3\partial_s\nabla^2\bar\Psi\bigr](x_j)S
 \right).
\end{equation}
Take $x_0=x_j$ in the fixed frame. Since
$\partial_1=(q/z_0)\partial_a$ and
$\bar\Psi(x_0)=-qd_0/z_0$, we have
\[
 S(\partial_a\nabla^2\bar\Psi)(x_j)S
 =-d_0\,S V_0^{-T}T_1V_0^{-1}S.
\]
Equations \eqref{main:Tmatrices}--\eqref{main:combination3} give
$|\det T_1|\gg1$, while $|\det S/\det V_0|\asymp_L1$.
Hence
\begin{equation}\label{eq:remaining-leading-coefficient}
 |[\ell_1^3]P_j|
 =\frac{d_0^3}{M^3}
 \left|\frac{\det S}{\det V_0}\right|^2|\det T_1|
 \gg_L(D/M)^3.
\end{equation}

Put $p=|P_j(\ell)|$. We shall prove
\begin{equation}\label{eq:remaining-capped-bound}
 |C_{\ell,j}|\ll_L\rho^3M^{3/2}
 \min\left\{\frac{\beta}{\sqrt M},\ p^{-1/2}\right\}.
\end{equation}
When $p=0$, the minimum denotes its first argument.
The threshold $\delta=M/\beta^2$ is where these two bounds agree,
since $\delta^{-1/2}=\beta/\sqrt M$.
For every $\ell\in\mathcal E$, the two-dimensional estimate gives
\[
 |C_{\ell,j}|\ll_Lq\beta\rho^2+\beta F_\ell\rho^3
 \ll_L\rho^3\beta M,
\]
because $F_\ell\le M$ and $M\rho/q\gg_LN^{11/60-\varepsilon}$.
This proves \eqref{eq:remaining-capped-bound} when $p<\delta$.

Suppose now that $p\ge\delta$.
The fundamental theorem of calculus gives, for $x\in4Q_j$,
\[
 S\nabla^2\ph(x)S
 =\int_0^1
 S\bigl[(\ell\cdot\nabla)\nabla^2\bar\Psi\bigr](x+t\ell)S
 \,\dd t.
\]
Freezing the integrand at $x_j$, the fourth derivative bounds in
\eqref{main:inputbounds} show that
\[
 \left\|S\nabla^2\ph(x)S
 -S\bigl[(\ell\cdot\nabla)\nabla^2\bar\Psi\bigr](x_j)S\right\|
 \ll_L F_\ell\bigl(\rho+|S^{-1}\ell|\bigr).
\]
Both matrices have norm $O_L(F_\ell)$. Multilinearity of the
determinant and $F_\ell\le M$ therefore give
\begin{equation}\label{eq:remaining-freezing}
 \left|M^{-3}\det(S\nabla^2\ph(x)S)-P_j(\ell)\right|
 \ll_L\rho+|S^{-1}\ell|.
\end{equation}
Since $\beta\ll_LN^{2/3+\varepsilon}$,
\begin{equation}\label{eq:remaining-freezing-margin}
 \delta\gg_L N^{-7/30-2\varepsilon},\qquad
 \frac{\rho+|S^{-1}\ell|}{\delta}
 \ll_LN^{-1/60+2\varepsilon}=o(1).
\end{equation}
Thus, uniformly on $4Q_j$,
\begin{equation}\label{eq:remaining-det-comparison}
 |\det(S\nabla^2\ph(x)S)|\asymp M^3p,
 \qquad
 \|(S\nabla^2\ph(x)S)^{-1}\|\ll_L(Mp)^{-1}.
\end{equation}
Here the inverse bound follows from the cofactor formula and
$\|S\nabla^2\ph(x)S\|\ll_LM$.

We apply the near-constant-Hessian bound
\eqref{eq:general-B-nearconstant} of
Proposition~\ref{prop:multidimensional-B} with
\[
 m=3,\qquad c=x_j,\qquad \mathsf X=\rho S,\qquad T=M\rho^2.
\]
We verify its relative Hessian condition in the original coordinates.
The preceding inverse bound and \eqref{main:shiftbounds} give
\[
 \sup_{x\in4Q_j}
 \left\|(S\nabla^2\ph(x_j)S)^{-1}
 S\bigl(\nabla^2\ph(x)-\nabla^2\ph(x_j)\bigr)S\right\|
 \ll_L\rho/p.
\]
For $1\le|\alpha|\le9$, they also give
\[
 \begin{aligned}
 &\rho^{|\alpha|}q^{\alpha_1}\beta^{\alpha_2+\alpha_3}
 \sup_{x\in4Q_j}
 \left\|(S\nabla^2\ph(x_j)S)^{-1}
 S(\partial^\alpha\nabla^2\ph)(x)S\right\|\\
 &\hspace{35mm}\ll_L\rho^{|\alpha|}/p.
 \end{aligned}
\]
These are precisely the derivatives of the relative Hessian after
$x=x_j+\rho Su$. Its $C^9$ norm is therefore
$O_L(\rho/p)=o(1)$ by \eqref{eq:remaining-freezing-margin}.
The rescaled Hessian has determinant $\asymp T^3p$ and inverse
$O_L((Tp)^{-1})$; its higher phase derivatives are $O_L(T)$.
The rescaled amplitude has $C^{16}$ norm $O_L(1)$ by
\eqref{main:cutoffbounds}. The first derivative bound relative to
the frequency centre $\nabla\ph(x_j)$ follows by integrating
\eqref{main:hessian}.
Since $M\rho/q\gg_LN^{11/60-\varepsilon}$ and $\beta\ll q$,
the bound \eqref{eq:general-B-nearconstant} yields
\[
 \begin{aligned}
 |C_{\ell,j}|
 &\ll_L q\beta^2\rho^3
 \bigl(T^{-3/2}p^{-1/2}+(Tp^2)^{-8}\bigr)
 (1+M\rho/q)(1+M\rho/\beta)^2\\
 &\ll_L\rho^3M^{3/2}p^{-1/2}+T^3(Tp^2)^{-8}.
 \end{aligned}
\]
The entries of the matrix defining $P_j$ are $O_L(M)$, so
$p\ll_L1$. The last error is absorbed by the first term, because
\[
 \begin{gathered}
 Tp^2\ge M\rho^2\delta^2
   =\frac{\rho^2M^3}{\beta^4}
   \gg_LN^{2/15-4\varepsilon},\\
 T^{3/2}\left(\frac{\rho^2M^3}{\beta^4}\right)^{-8}
   \ll_LN^{-1/6+32\varepsilon}=o(1).
 \end{gathered}
\]
This proves \eqref{eq:remaining-capped-bound} also for $p\ge\delta$.

For a cubic polynomial
$P(t)=c\prod_{r=1}^3(t-z_r)$ with $c\ne0$, we have, for every $U>0$,
\begin{equation}\label{eq:remaining-cubic-sum}
 \sum_{k\in\mathbb Z}\min\{U,|P(k)|^{-1/2}\}
 \ll U+|c|^{-1/3}U^{1/3}.
\end{equation}
Indeed, $|t-z_r|\ge|t-\operatorname{Re}z_r|$, and
\[
 |P(t)|^{-1/2}
 \le |c|^{-1/2}\sum_{r=1}^3
       |t-\operatorname{Re}z_r|^{-3/2}.
\]
Taking the minimum with $U$ bounds the summand by the sum of the
three separately truncated terms.
For each real centre, the two nearest integers contribute at most
$2U$. Compare the remaining terms with
\[
 \int_0^\infty\min\{U,|c|^{-1/2}t^{-3/2}\}\,\dd t
 =3|c|^{-1/3}U^{1/3}.
\]
This proves \eqref{eq:remaining-cubic-sum}, including repeated and
nonreal roots.

Apply \eqref{eq:remaining-cubic-sum} in $\ell_1$, with $\ell_2,\ell_3$
fixed and $U=\beta/\sqrt M$.
Equations \eqref{eq:remaining-leading-coefficient} and
\eqref{eq:remaining-capped-bound} give
\[
 \sum_{\ell_1:\,\ell\in\mathcal E}|C_{\ell,j}|
 \ll_L\rho^3M^{3/2}
 \left\{\frac{\beta}{\sqrt M}
       +\frac MD\left(\frac{\beta}{\sqrt M}\right)^{1/3}\right\}.
\]
There are $O(N^{1/5})$ pairs $(\ell_2,\ell_3)$ satisfying
\eqref{main:shifts} and $O_L(\rho^{-3})$ retained indices $j$.
Summing and dividing by $\#\mathcal R\asymp N^{67/50}/(DL)$ gives
\begin{align*}
 \frac1{\#\mathcal R}\sum_{\ell\in\mathcal E}|C_\ell|
 &\ll_L D N^{-57/50}M^{3/2}
 \left\{\frac{\beta}{\sqrt M}
       +\frac MD\left(\frac{\beta}{\sqrt M}\right)^{1/3}\right\}\\
 &\ll_L D\beta N^{-1/25}+\beta^{1/3}N^{107/75},
\end{align*}
as required.
\end{proof}

\subsection{Completion of the counting estimate}\label{sec:recover}
\begin{proof}[Proof of Proposition~\ref{prop:energy}]
We first consider $qD>N^{3/2}$ and apply \eqref{main:Aentry}.
For dominant shifts, $F_\ell\ll N^{57/50}/L$ and
$\#(\mathcal R-\mathcal R)\ll\#\mathcal R$.
Proposition~\ref{prop:main-correlations} gives
\begin{align*}
 N\sqrt D\left(\frac1{\#\mathcal R}
       \sum_{\ell\ \mathrm{dominant}}|C_\ell|\right)^{1/2}
 &\ll_LN\sqrt D\left[
 q^{1/4}N^{(57/50)(7/12)}
       +\rho^{-1/4}N^{(57/50)(2/3)}\right]\\
 &\ll_LN^{1399/600+\varepsilon/4}+N^{929/400}.
\end{align*}
For the remaining nonzero shifts, Lemma~\ref{lem:remaining-shift-average}
gives
\begin{align}\label{eq:remaining-A-return}
 N\sqrt D\left(\frac1{\#\mathcal R}
          \sum_{\ell\in\mathcal E}|C_\ell|\right)^{1/2}
 &\ll_L N^{12/25}q^{1/2}D^{3/2}
       +N^{116/75}q^{1/6}D^{2/3}\notag\\
 &\ll_L N^{347/150+\varepsilon/2}
       +N^{523/225+\varepsilon/6}.
\end{align}
The zero shift contributes at most
\[
 \frac{(qD)^{3/2}}{\sqrt{\#\mathcal R}}
 \ll_Lq^{3/2}D^2N^{-67/100}
 \ll_LN^{233/100+3\varepsilon/2}.
\]
Finally, \eqref{main:firstB}, $qD>N^{3/2}$, and
$q<N^{2/3+\varepsilon}$, $D\ll N/L$ give
\[
 |E_q|\ll_LN^{-1/4}+N^{5/6+\varepsilon/2}.
\]
All these exponents are less than $7/3-\varepsilon$ for
$\varepsilon=10^{-4}$. Together with \eqref{main:small}, which handles
$qD\le N^{3/2}$, this proves
\[
 |S_q|\ll_LN^{7/3-\varepsilon}\qquad(1\le q<R)
\]
uniformly for $P,H,D,q$ satisfying \eqref{main:range}.
Insert this in \eqref{first:Fejer} and sum over the $O(L)$ dyadic
values of $D$ allowed by \eqref{main:range}.
The zero frequency satisfies \eqref{first:zero}.
The terms with $d=0$ contribute $O(N^2)$, and negative $d$ are
handled by symmetry. Thus
\[
 \mathcal E_{P,H}(N^{-1})\ll_LN^{7/3-\varepsilon}.
\]
Absorbing the fixed logarithmic factor proves \eqref{eq:energy},
for example with $\sigma=\varepsilon/2$.
\end{proof}

\appendix

\section{The root equation and the first stationary map}\label{app:root}

Throughout this appendix, $N/(2L^4)\le P,H\le N$.
Constants may depend on the fixed cutoff functions
$\theta_d,\theta_n,\theta_h$, the constant $C_\Omega$ defining
$\Omega_{D,P,H}$, and the derivative order.
The exponent $B$ may vary between estimates.

\subsection{Root identities and uniform estimates}\label{app:root-range}

For $n>1$ and $0\le d<h$, let $\kappa=\kappa(d,n,h)$ solve
\begin{equation}\label{app:root-equation}
 (\kappa+h-d)\log(\kappa+h-d)-\kappa\log\kappa
 =(n+h)\log(n+h)-n\log n.
\end{equation}
The left side increases strictly to infinity and at $\kappa=n+d$ does not exceed the right side.
Hence there is a unique root $\kappa\ge n+d$, with strict inequality for $d>0$.
Its smoothness follows from the implicit function theorem.
Write
\[
 v=n+h,\qquad w=\kappa+h-d,\qquad z=\log(w/\kappa).
\]
If $d\asymp D$, $n,\kappa\asymp P$ and $h\asymp H$, then
\begin{equation}\label{app:D-range}
 D\ll \frac{P\log(1+H/P)}L\le\frac HL.
\end{equation}
The root equation gives
\begin{equation}\label{app:root-integral}
 \int_n^\kappa\log\left(1+\frac{h-d}{x}\right)\,\dd x
 =\int_{h-d}^h[1+\log(n+t)]\,\dd t.
\end{equation}
The right side is $\gg dL$, whereas the left side is $\ll P\log(1+H/P)$.

Assume now that $D$ satisfies the upper bound in \eqref{main:range},
and fix $C_0>1$. For sufficiently large $N$, the region
\begin{equation}\label{app:source-enlarged}
 0\le d\le C_0D,\qquad P/C_0\le n\le C_0P,
 \qquad H/C_0\le h\le C_0H
\end{equation}
satisfies $d/h\ll L^{-1}$ and
\begin{equation}\label{app:root-scales}
 \kappa\asymp n\asymp P,\qquad w\asymp v,\qquad
 z\asymp\log(1+h/n),\qquad L^{-4}\ll z\ll\log L.
\end{equation}
For $x\ge n$, concavity of the logarithm gives
\[
 \log\left(1+\frac{h-d}{x}\right)
 \ge\frac nx\log\left(1+\frac{h-d}{n}\right).
\]
Hence \eqref{app:root-integral} implies
\[
 n\log\left(1+\frac{h-d}{n}\right)\log\frac\kappa n
 \ll dL\ll P\log(1+H/P).
\]
The first factor on the left is comparable to $P\log(1+H/P)$.
Thus $\kappa\asymp n$, which gives the other estimates in \eqref{app:root-scales}.

If $n,m\asymp P$, $h\asymp H$, $1\le d<h$ and
$|G_h(n)-G_{h-d}(m)|\le N^{-1}$, then
\begin{equation}\label{app:near-integer}
 d\ll\frac{P\log(1+H/P)}L\ll\frac HL,
 \qquad |m-\kappa(d,n,h)|\ll\frac{L^4}{N}.
\end{equation}
Replacing $\kappa$ by $m$ in \eqref{app:root-integral} introduces an error of at most $N^{-1}$ and proves the bound for $d$.
Equation~\eqref{app:root-scales} then gives $\kappa\asymp P$.
On the segment joining $m$ and $\kappa$,
$G_{h-d}'(x)=\log(1+(h-d)/x)\gg L^{-4}$.
The mean value theorem proves the second bound.
When $d=0$, the same derivative bound gives
$|G_h(n)-G_h(m)|\gg L^{-4}|n-m|$.
Thus a pair of integers satisfying $|G_h(n)-G_h(m)|\le N^{-1}$ must have $m=n$ for sufficiently large $N$.

Differentiation of \eqref{app:root-equation} gives
\begin{equation}\label{app:root-first}
 \partial_d\kappa=\frac{1+\log w}{z},\qquad
 \partial_n\kappa=\frac{\log(v/n)}z,\qquad
 \partial_h\kappa=\frac{\log(v/w)}z.
\end{equation}
Define
\[
 \Delta=n(1+\log n)^2+w(1+\log w)^2
       -\kappa(1+\log\kappa)^2-v(1+\log v)^2.
\]
Since $\partial_dw=(1+\log\kappa)/z$, direct differentiation yields
\begin{equation}\label{app:Delta-integral}
 \begin{split}
 \partial_d\Delta&=(1+\log\kappa)(1+\log w),\\
 \Delta&=\int_0^d(1+\log\kappa(t,n,h))(1+\log w(t,n,h))\,\dd t
 \asymp dL^2.
 \end{split}
\end{equation}
We used $\kappa(0,n,h)=n$, $w(0,n,h)=v$, and \eqref{app:root-scales} on $0\le t\le d$.

With gradients taken in $(d,n,h)$, a second differentiation gives
\begin{equation}\label{app:root-hessian}
 z\nabla^2\kappa
 =\frac{\nabla\kappa(\nabla\kappa)^T}{\kappa}
 -\frac{\nabla n(\nabla n)^T}{n}
 +\frac{\nabla v(\nabla v)^T}{v}
 -\frac{\nabla w(\nabla w)^T}{w}.
\end{equation}
The four vectors on the right are
\[
 \begin{pmatrix}\partial_d\kappa\\\partial_n\kappa\\\partial_h\kappa\end{pmatrix},\quad
 \begin{pmatrix}0\\1\\0\end{pmatrix},\quad
 \begin{pmatrix}0\\1\\1\end{pmatrix},\quad
 \begin{pmatrix}\partial_d\kappa-1\\\partial_n\kappa\\\partial_h\kappa+1\end{pmatrix}.
\]
Their $123$, $124$, $134$ and $234$ determinants are, respectively,
$(1+\log w)/z$, $(1+\log v)/z$, $(1+\log n)/z$ and $(1+\log\kappa)/z$.
Expanding the determinant in \eqref{app:root-hessian} therefore gives
\begin{equation}\label{app:root-det}
 \det\nabla^2\kappa=-\frac{\Delta}{nv\kappa wz^5}<0\qquad(d>0).
\end{equation}
Moreover,
\[
 z\partial_d^2\kappa
 =\frac{(\partial_d\kappa)^2}{\kappa}
  -\frac{(\partial_d\kappa-1)^2}{w}>0,
\]
because $w>\kappa$ and $\partial_d\kappa>1$.
Together with \eqref{app:root-det}, this gives two positive eigenvalues and one negative eigenvalue.
Thus the signature is $1$.

The following refinements hold uniformly for the triples
$(d,n,h)$ in \eqref{app:source-enlarged} with $d>0$:
\begin{equation}\label{app:Delta-refined}
 \frac\Delta d=(\log n)^2+O(L(1+z)),\qquad
 \left|\frac{d\partial_d\Delta}{\Delta}-1\right|\ll L^{-1},\qquad
 \frac{n|\partial_n\Delta|+h|\partial_h\Delta|}{\Delta}\ll L^{-1}.
\end{equation}
The functions $\kappa(t,n,h)$ and $w(t,n,h)$ increase with $t$, and their logarithms vary by $O(1)$ on $[0,d]$.
The integrand in \eqref{app:Delta-integral} therefore differs from its endpoint value by $O(L)$, and from $(\log n)^2$ by $O(L(1+z))$.
This proves the first two estimates.
By \eqref{app:root-first}, $|\partial_n\kappa|\ll1$ and
\[
 \frac{h|\partial_h\kappa|}{\kappa}
 \le\frac{h(w-v)}{v\kappa z}\ll\frac h{vz}\ll1.
\]
The $n$- and $h$-weighted derivatives of the integrand are consequently $O(L)$.
Differentiation under the integral sign is valid because
$\kappa(t,n,h)$ and $w(t,n,h)$ are smooth on an open set containing
the segment $\{(t,n,h):0\le t\le d\}$, and proves the last estimate.
We also have
\begin{equation}\label{app:elementary-scales}
 \frac dn\ll\frac{1+z}{L},\qquad \frac dh\ll L^{-1},\qquad
 \frac{v\log v-n\log n}{h}=\log n+O(1+z),\qquad
 \frac h{vz}\asymp\frac1{1+z}.
\end{equation}
The third estimate follows by integrating $1+\log x$ from $n$ to $v$.
The last follows from
$x/((1+x)\log(1+x))\asymp(1+\log(1+x))^{-1}$ for $x>0$.

\subsection{Injectivity of the first stationary map}\label{app:first-unique}

For fixed $q>0$ and $(a,b,s)$, consider
\begin{equation}\label{app:stationary}
 q(\partial_d\kappa,\partial_n\kappa,\partial_h\kappa)=(a,b,s),
 \qquad n>1,\quad 0<d<h.
\end{equation}
Every solution satisfies
\begin{equation}\label{app:four-exponentials}
 w=e^{az/q-1},\quad \kappa=e^{(a/q-1)z-1},\quad
 v=e^{(a+s)z/q-1},\quad n=e^{(a+s-b)z/q-1}.
\end{equation}
Since $w>v$ and $\log(v/n)>z$, we have $s<0$ and $b>q$.
Also, $a+s-b>0$ since $n>1$.

Let $F(z)$ be the expression obtained by substituting \eqref{app:four-exponentials} into
$w\log w-\kappa\log\kappa-v\log v+n\log n$.
Then
\[
 F(z)=e^{-1}z^2\left[
 \int_{a/q-1}^{a/q}te^{tz}\,\dd t
 -\int_{(a+s-b)/q}^{(a+s)/q}te^{tz}\,\dd t\right].
\]
After cancellation of any common part of the two intervals,
\begin{align*}
 \frac{e^{1-(a/q-1)z}}{z^2}F(z)
 ={}&\int_{\max\{a/q-1,(a+s)/q\}}^{a/q}
       te^{(t-a/q+1)z}\,\dd t\\
 &-\int_{(a+s-b)/q}^{\min\{a/q-1,(a+s)/q\}}
       te^{(t-a/q+1)z}\,\dd t.
\end{align*}
All endpoints are positive, and both intervals have positive length.
In the first integral $t-a/q+1\ge0$; in the second it is nonpositive.
The displayed expression is therefore strictly increasing in $z$ and has at most one zero.
Equations \eqref{app:four-exponentials}, followed by $h=v-n$ and $d=\kappa+v-n-w$, determine the stationary point uniquely.

By \eqref{app:root-det}, the map $q\nabla\kappa$ on the domain
$n>1$, $0<d<h$ in \eqref{app:stationary} is also a local
diffeomorphism. Its inverse on the image of this domain under
$q\nabla\kappa$ is therefore smooth.
We write this inverse as $(d(a,b,s),n(a,b,s),h(a,b,s))$.

\subsection{Derivative bounds and the frequency neighborhood}\label{app:coarse}

For fixed nonnegative integers $i,j,k$, throughout \eqref{app:source-enlarged} one has
\begin{equation}\label{app:root-coarse}
 D^iP^jH^k\left|\partial_d^i\partial_n^j\partial_h^k(\kappa-n)\right|
 \ll DL^B.
\end{equation}
To see this, implicit differentiation gives, for $r=i+j+k\ge1$,
\[
 |\partial_d^i\partial_n^j\partial_h^k\kappa|\ll N^{1-r}L^B.
\]
Indeed, $n,v,\kappa,w$ lie between $NL^{-B}$ and $NL^B$, and $z\gg L^{-B}$.
At each step the highest derivative of $\kappa$ has coefficient $z$.
All other terms are bounded by induction, using the derivatives of $x\log x$.
If $i\ge1$, multiplication by $D^iP^jH^k$ proves \eqref{app:root-coarse}.
If $i=0$, use
\[
 \kappa(d,n,h)-n=\int_0^d\partial_d\kappa(t,n,h)\,\dd t
\]
and differentiate under the integral sign.

Within \eqref{app:source-enlarged}, restrict now to $d\asymp D>0$
and put
\begin{equation}\label{app:K-definition}
 K_\kappa=D^{-1}\diag(D,P,H)\nabla^2\kappa\diag(D,P,H).
\end{equation}
Its entries and all fixed-order weighted derivatives are $O(L^B)$, and
\[
 \det K_\kappa=-\frac{P^2H^2\Delta}{Dnv\kappa wz^5},\qquad
 |\det K_\kappa|^{-1}\ll L^B.
\]
The cofactor formula and
$\partial_dK_\kappa^{-1}=-K_\kappa^{-1}(\partial_dK_\kappa)K_\kappa^{-1}$
give the same bounds for $K_\kappa^{-1}$ and its fixed-order weighted derivatives.

On the image under $q\nabla\kappa$ of the domain in
\eqref{app:stationary}, define
\[
 \ps(a,b,s)=q\kappa(d,n,h)-ad-bn-sh,
\]
where $(d,n,h)$ is the inverse image of $(a,b,s)$.
The root equation and stationarity give
\begin{equation}\label{app:dual-identities}
 \ps=-\frac{qd}{z},\qquad
 \nabla_{a,b,s}\ps=-(d,n,h)^T,\qquad
 \partial_a\ps=\frac zq\ps,\qquad
 \nabla^2_{a,b,s}\ps=-\frac1q(\nabla^2_{d,n,h}\kappa)^{-1}.
\end{equation}
The first identity follows by multiplying $\ps/q$ by $z$ and using \eqref{app:root-first}.
For the others, differentiate and cancel the inverse-map derivatives by stationarity.
In particular,
\begin{equation}\label{app:dual-det}
 \det\nabla^2\ps=\frac{nv\kappa wz^5}{q^3\Delta}.
\end{equation}
The chain rule gives
\begin{gather}
 \diag(q,qD/P,qD/H)\nabla^2\ps\diag(q,qD/P,qD/H)
 =-qD K_\kappa^{-1},\label{app:weighted-dual-Hessian}\\
 \begin{pmatrix}q\partial_a\\(qD/P)\partial_b\\(qD/H)\partial_s\end{pmatrix}
 =K_\kappa^{-1}
 \begin{pmatrix}D\partial_d\\P\partial_n\\H\partial_h\end{pmatrix}.
 \label{app:weighted-chain}
\end{gather}
In the second identity, the right side acts before composition with the stationary inverse.
Consequently, with $\beta=qDL/N$, for every fixed order
\begin{equation}\label{app:dual-coarse}
 q^i\beta^{j+k}|\partial_a^i\partial_b^j\partial_s^k\ps|
 \ll qDL^B.
\end{equation}
For orders zero and one this follows from \eqref{app:dual-identities}.
For higher orders, apply \eqref{app:weighted-dual-Hessian} and \eqref{app:weighted-chain} repeatedly and use $PL/N,HL/N\le L$.

We now verify that \eqref{app:dual-coarse} holds throughout the
neighborhood of $\supp W$ specified in \eqref{first:legal}.
The box $\Omega_{D,P,H}$ contains $\supp\omega$ with a fixed
positive margin in the $(D,P,H)$ scales.
Fix $(d_*,n_*,h_*)\in\supp\omega$ and put
$(a_*,b_*,s_*)=q\nabla\kappa(d_*,n_*,h_*)$.
Use the centered coordinates
\[
 u=\left(\frac{d-d_*}{D},\frac{n-n_*}{P},\frac{h-h_*}{H}\right),
 \qquad
 y=\left(\frac{a-a_*}{q},
          \frac{b-b_*}{qD/P},\frac{s-s_*}{qD/H}\right).
\]
The derivative of the stationary map from $u$ to $y$ is
$K_\kappa(d_*+Du_1,n_*+Pu_2,h_*+Hu_3)$.
In the next display, abbreviate this matrix by $K_\kappa(u)$.
This matrix, its first $u$-derivatives and its inverse are $O(L^B)$.
For sufficiently large fixed $B_1$, the closed ball
$\{u:|u|\le r\}$, where $r=L^{-B_1}$, corresponds under
$u\mapsto(d_*+Du_1,n_*+Pu_2,h_*+Hu_3)$ to a subset of
$\Omega_{D,P,H}$ and satisfies
\[
 \|K_\kappa(0)^{-1}\|
 \sup_{|u|\le r}\|K_\kappa(u)-K_\kappa(0)\|\le\tfrac12.
\]
After multiplication by $K_\kappa(0)^{-1}$, the inverse equation
is a contraction on $\{u:|u|\le r\}$ whenever the target satisfies
$|y|<r/(2\|K_\kappa(0)^{-1}\|)$.
It therefore defines an inverse for $|y|<L^{-B_2}$ with a sufficiently
large fixed $B_2$, uniformly for $(d_*,n_*,h_*)\in\supp\omega$.
These target balls are centered at $(a_*,b_*,s_*)$ in the
$(q,qD/P,qD/H)$ frequency scales.
Injectivity makes the inverses agree on overlaps of these balls.
Changing the frequency scales to $(q,\beta,\beta)$ costs only a
fixed power of $L$. This proves \eqref{first:legal}; the inverse
image of the neighborhood \eqref{first:legal} under
$q\nabla\kappa$ lies in
$\Omega_{D,P,H}$.

For the shifts used in the proof,
\begin{equation}\label{app:shift-neighbourhood}
 \frac{|\ell_1|}{q}+\frac{|\ell_2|+|\ell_3|}{\beta}
 \ll N^{-9/25}/L.
\end{equation}
Both $N^{-9/25}/L$ in \eqref{app:shift-neighbourhood} and
$\rho=N^{-1/4}$ from \S\ref{sec:partition} are $o(L^{-B_*})$,
where $L^{-B_*}$ is the radius in \eqref{first:legal}.
Choose $x_0\in\supp W\cap Q_j$ for a retained index $j$.
For every $x\in4Q_j$ and $0\le t\le1$,
\[
 |S^{-1}(x+t\ell-x_0)|
 \le C\rho+|S^{-1}\ell|<L^{-B_*}
\]
for sufficiently large $N$. Thus $4Q_j$, $4Q_j+\ell$, and every
intervening shift segment lie in the $(a,b,s)$ neighborhood
\eqref{first:legal}.

\subsection{The transformed amplitude}\label{app:amplitude}

Let $\omega\in C_c^\infty(\Omega_{D,P,H})$ satisfy
$D^iP^jH^k|\partial_d^i\partial_n^j\partial_h^k\omega|\ll L^B$ for every fixed order.
For $(a,b,s)\in q\nabla\kappa(\Omega_{D,P,H})$, let
$(d,n,h)\in\Omega_{D,P,H}$ be its inverse image under
$q\nabla\kappa$, and define
\[
 W(a,b,s)=\frac{\sqrt D}{N^2}
 \frac{\omega(d,n,h)}{\sqrt{|\det\nabla^2\kappa(d,n,h)|}}.
\]
Since $\det\nabla^2\kappa=D\det K_\kappa/(P^2H^2)$, the preceding derivative bounds imply
\begin{equation}\label{app:amplitude-coarse}
 q^i\beta^{j+k}|\partial_a^i\partial_b^j\partial_s^kW|
 \ll L^B\frac{PH}{N^2}\ll L^B.
\end{equation}
The extension of $W$ by zero outside $q\nabla\kappa(\Omega_{D,P,H})$
is smooth.
The product rule gives, for $W_\ell(x)=W(x+\ell)\overline{W(x)}$,
\begin{equation}\label{app:shift-amplitude-coarse}
 q^i\beta^{j+k}|\partial_a^i\partial_b^j\partial_s^kW_\ell|
 \ll L^B.
\end{equation}

\section{Derivatives of the dual phase}\label{mat:appendix}

Fix $x_0=(a_0,b_0,s_0)\in q\nabla\kappa(\Omega_{D,P,H})$ and let
$\partial_i=V_{i,0}\cdot\nabla_{a,b,s}$, with $V_{i,0}$ as in
\eqref{main:frame}.
Quantities with subscript $0$ remain fixed. For
$x=(a,b,s)\in q\nabla\kappa(\Omega_{D,P,H})$, the unsubscripted
root variables are evaluated at its inverse image under
$q\nabla\kappa$.

\subsection{Matrix formulas before evaluation at the base point}

Substitute \eqref{app:four-exponentials} into the root equation and differentiate.
The coefficient of $\dd z$ is $\Delta/z$.
The coefficients of $\dd a$, $\dd b$ and $\dd s$ are, respectively,
$-zd/q$, $-zn(1+\log n)/q$ and
$-z\{v(1+\log v)-n(1+\log n)\}/q$.
Thus
\begin{equation}\label{mat:gradz}
 \nabla z=\frac{z^2}{q\Delta}
 \begin{pmatrix}d\\n(1+\log n)\\v(1+\log v)-n(1+\log n)\end{pmatrix},
 \qquad \nabla=\nabla_{a,b,s}.
\end{equation}
Differentiating $d=\kappa+v-n-w$, $n$ and $h=v-n$ gives
\begin{equation}\label{mat:dualfactor}
 \nabla^2\ps=-\frac zq
 \begin{pmatrix}d&n&h\\n&-n&n\\h&n&h\end{pmatrix}
 -\frac{q\Delta}{z^3}(\nabla z)(\nabla z)^T.
\end{equation}
Hence, for $x\in q\nabla\kappa(\Omega_{D,P,H})$, the entries
in \eqref{main:Adef} satisfy
\begin{align}
 A_{11}={}&\left(\frac z{z_0}\right)^2\left(1+\frac d\Delta\right),\label{mat:A11}\\
 A_{12}={}&\left(\frac z{z_0}\right)^2\sqrt{\frac{Lh_0}{v_0z_0}}
 \left[-1+\frac{d_0n}{n_0d}
 +\frac{-d+(d_0/n_0)n(1+\log n)}\Delta\right],\label{mat:A12}\\
 A_{13}={}&\left(\frac z{z_0}\right)^2\sqrt{\frac{Lh_0}{v_0z_0}}
 \left[-1+\frac{d_0h}{h_0d}
 +\frac{-d+(d_0/h_0)\{v(1+\log v)-n(1+\log n)\}}\Delta\right],\label{mat:A13}\\
 A_{22}={}&\left(\frac z{z_0}\right)^2\frac{Lh_0}{v_0z_0}
 \left[1-\left(\frac{2d_0}{n_0}+\frac{d_0^2}{n_0^2}\right)\frac nd
 +\frac{[-d+(d_0/n_0)n(1+\log n)]^2}{d\Delta}\right],\label{mat:A22}\\
 A_{23}={}&\left(\frac z{z_0}\right)^2\frac{Lh_0}{v_0z_0}
 \left[1-\frac{d_0n}{n_0d}-\frac{d_0h}{h_0d}+\frac{d_0^2n}{n_0h_0d}\right.\notag\\*
 &\left.\hspace{8mm}+
 \frac{[-d+(d_0/n_0)n(1+\log n)]
 [-d+(d_0/h_0)\{v(1+\log v)-n(1+\log n)\}]}{d\Delta}\right],\label{mat:A23}\\
 A_{33}={}&\left(\frac z{z_0}\right)^2\frac{Lh_0}{v_0z_0}
 \left[1-\left(\frac{2d_0}{h_0}-\frac{d_0^2}{h_0^2}\right)\frac hd\right.\notag\\*
 &\left.\hspace{8mm}+
 \frac{[-d+(d_0/h_0)\{v(1+\log v)-n(1+\log n)\}]^2}{d\Delta}\right].\label{mat:A33}
\end{align}
All derivatives below are taken before evaluation at $x_0$.

\subsection{Values and first derivatives at the base point}

Evaluation at $x_0$ gives
\begin{equation}\label{mat:baseentries}
 \begin{aligned}
 A_{11,0}&=1+\frac{d_0}{\Delta_0},\\
 A_{12,0}&=\sqrt{\frac{Lh_0}{v_0z_0}}
                  \frac{d_0\log n_0}{\Delta_0},\\
 A_{13,0}&=\sqrt{\frac{Lh_0}{v_0z_0}}
                  \frac{d_0(v_0\log v_0-n_0\log n_0)}{h_0\Delta_0},\\
 A_{22,0}&=\frac{Lh_0}{v_0z_0}
 \left[-1-\frac{d_0}{n_0}+\frac{d_0(\log n_0)^2}{\Delta_0}\right].
 \end{aligned}
\end{equation}
By \eqref{app:Delta-refined} and \eqref{app:elementary-scales}, the bracketed expressions in
\eqref{mat:A22}--\eqref{mat:A33}, evaluated at $x_0$, are $O((1+z_0)/L)$.
Consequently,
\begin{equation}\label{mat:Abounds}
 1\le A_{11,0}\le C,\qquad |A_{ij,0}|\le C,\qquad
 |A_{12,0}|+|A_{13,0}|=o(1).
\end{equation}
The $o(1)$ terms are uniform for
$(d_0,n_0,h_0)\in\Omega_{D,P,H}$.

To bound $A_{22,0}$ away from zero, use
\[
 \frac\Delta d\ge(1+\log n)(1+\log v),\qquad
 z\le\log(v/n),\qquad
 \frac hv[1+\log(v/n)]\ge\log(v/n).
\]
It follows that
\begin{equation}\label{mat:A22negative}
 -A_{22,0}\ge\frac{Lh_0}{v_0z_0}
 \frac{1+\log(v_0/n_0)}{1+\log v_0}
 \ge\frac L{1+\log v_0}\ge\tfrac12
\end{equation}
for sufficiently large $N$.
Also, \eqref{app:dual-det} gives
\[
 \det V_0=\frac{q^3Ld_0^2}{n_0v_0z_0^4},\qquad
 \det\mathsf A(x_0)=-\frac{d_0L^2\kappa_0w_0}{n_0v_0\Delta_0},\qquad
 -\det\mathsf A(x_0)\asymp1.
\]
Expansion along the first row, together with \eqref{mat:Abounds}, yields
\begin{equation}\label{mat:blocknegative}
 A_{22,0}A_{33,0}-A_{23,0}^2\le-c.
\end{equation}

It remains to prove
\begin{equation}\label{mat:derivativesmall}
 (\partial_1A_{ij})(x_0)=o(1)\quad(1\le i,j\le3),\qquad
 (\partial_kA_{ij})(x_0)=o(1)\quad(i,j,k\in\{2,3\}).
\end{equation}
Differentiating $\nabla\ps=-(d,n,h)^T$ along the fixed directions gives
\begin{equation}\label{mat:rates}
 \begin{aligned}
 \left.\frac{\partial_kd}{d}\right|_0&=A_{1k,0},\\
 \left.\frac{\partial_kn}{n}\right|_0
 &=A_{1k,0}+\sqrt{\frac{v_0z_0}{Lh_0}}A_{2k,0},\\
 \left.\frac{\partial_kh}{h}\right|_0
 &=A_{1k,0}+\sqrt{\frac{v_0z_0}{Lh_0}}A_{3k,0}.
 \end{aligned}
\end{equation}
These ratios are $O(1)$, and are $o(1)$ for $k=2,3$, since
$\sqrt{v_0z_0/(Lh_0)}\ll\sqrt{(1+z_0)/L}=o(1)$.
The chain rule and \eqref{app:Delta-refined} give
\begin{equation}\label{mat:Deltadirection}
 \frac{\partial_k\Delta}{\Delta}=\frac{\partial_kd}{d}
 +O\left(\frac1L\left[
 \frac{|\partial_kd|}{d}+\frac{|\partial_kn|}{n}
 +\frac{|\partial_kh|}{h}\right]\right).
\end{equation}

By \eqref{mat:gradz}, the fixed directional derivatives of $z$
satisfy, for $x\in q\nabla\kappa(\Omega_{D,P,H})$,
\begin{equation}\label{mat:zdirections}
 \begin{aligned}
 \partial_1z&=\frac{z^2d}{z_0\Delta},\\
 \partial_2z&=\frac{z^2}{\Delta}\sqrt{\frac{Lh_0}{v_0z_0^3}}
 \left[-d+\frac{d_0}{n_0}n(1+\log n)\right],\\
 \partial_3z&=\frac{z^2}{\Delta}\sqrt{\frac{Lh_0}{v_0z_0^3}}
 \left[-d+\frac{d_0}{h_0}\{v(1+\log v)-n(1+\log n)\}\right].
 \end{aligned}
\end{equation}
At $x_0$, the ratios $\partial_i z/z$ are $d_0/\Delta_0$, $A_{12,0}$ and $A_{13,0}$, all $o(1)$.
For the last two brackets, the ratios of their $\partial_k$ derivatives to their values, all evaluated at $x_0$, are
\begin{equation}\label{mat:bracketrates}
 \begin{aligned}
 &\left.\left[\frac{\partial_kn}{n}
 +\frac{2\partial_kn/n-\partial_kd/d}{\log n}\right]\right|_0,\\
 &\left.\left[\frac{\partial_kh}{h}
 +\frac{2\partial_kh/h-\partial_kd/d
 +(n/h)\log(v/n)(\partial_kn/n-\partial_kh/h)}{(v\log v-n\log n)/h}\right]\right|_0.
 \end{aligned}
\end{equation}
Both are $O(1)$, since $(n/h)\log(v/n)\le1$ and the denominators are $\asymp L$.
Differentiating \eqref{mat:zdirections} and using \eqref{mat:Deltadirection} now gives
$(\partial_i\partial_jz)(x_0)/z_0=o(1)$ for every $i,j$.
Finally, the identity $\partial_1\ps=(z/z_0)\ps$, valid for
$x\in q\nabla\kappa(\Omega_{D,P,H})$, implies
\[
 (\partial_1A_{ij})(x_0)
 =\left.\left[
 \frac{\partial_i z}{z}\frac{\partial_j\ps}{\ps}
 +\frac{\partial_j z}{z}\frac{\partial_i\ps}{\ps}
 +\frac{\partial_i\partial_jz}{z}\right]\right|_0=o(1).
\]

For $k=2,3$, differentiate the square term in \eqref{mat:A22}
with the subscript-$0$ coefficients fixed.
Equations \eqref{mat:bracketrates} and \eqref{mat:Deltadirection} give
\begin{align*}
 \left.\partial_k
 \frac{[-d+(d_0/n_0)n(1+\log n)]^2}{d\Delta}\right|_0
 ={}&\frac{2d_0(\log n_0)^2}{\Delta_0}
 \left.\left(\frac{\partial_kn}{n}-\frac{\partial_kd}{d}\right)\right|_0\\*
 &+O\left(\frac1L
 \left.\left[\frac{|\partial_kd|}{d}+\frac{|\partial_kn|}{n}
 +\frac{|\partial_kh|}{h}\right]\right|_0\right).
\end{align*}
Combining this with the derivative of the linear term gives the coefficient
\[
 -\left(2+\frac{d_0}{n_0}\right)
 +\frac{2d_0(\log n_0)^2}{\Delta_0}
 =\frac{2v_0z_0}{Lh_0}A_{22,0}+\frac{d_0}{n_0}.
\]
After multiplication by $Lh_0/(v_0z_0)$, this is
$2A_{22,0}+Lh_0d_0/(v_0z_0n_0)=O(1)$ by \eqref{app:elementary-scales}.
Differentiating the remaining factor $(z/z_0)^2$ gives the first identity below.
The same calculation for \eqref{mat:A33}, using the second ratio in
\eqref{mat:bracketrates}, gives the second:
\begin{align}
 (\partial_kA_{22})(x_0)
 ={}&2A_{22,0}\left.\frac{\partial_kz}{z}\right|_0
 +\left(2A_{22,0}+\frac{Lh_0d_0}{v_0z_0n_0}\right)
 \left.\left(\frac{\partial_kn}{n}-\frac{\partial_kd}{d}\right)\right|_0\notag\\
 &+O\left(\frac{h_0}{v_0z_0}
 \left.\left[\frac{|\partial_kd|}{d}+\frac{|\partial_kn|}{n}
 +\frac{|\partial_kh|}{h}\right]\right|_0\right),\label{mat:A22derivative}\\
 (\partial_kA_{33})(x_0)
 ={}&2A_{33,0}\left.\frac{\partial_kz}{z}\right|_0
 +\left(2A_{33,0}-\frac{Ld_0}{v_0z_0}\right)
 \left.\left(\frac{\partial_kh}{h}-\frac{\partial_kd}{d}\right)\right|_0\notag\\
 &+O\left(\frac{h_0}{v_0z_0}
 \left.\left[\frac{|\partial_kd|}{d}+\frac{|\partial_kn|}{n}
 +\frac{|\partial_kh|}{h}\right]\right|_0\right).\label{mat:A33derivative}
\end{align}
The coefficients are $O(1)$ by \eqref{app:elementary-scales}, so \eqref{mat:rates} makes both right sides $o(1)$.
The fixed directional derivatives commute, and
$(\partial_2\ps)(x_0)=(\partial_3\ps)(x_0)=0$.
Hence
\[
 (\partial_2A_{23})(x_0)=(\partial_3A_{22})(x_0),\qquad
 (\partial_3A_{23})(x_0)=(\partial_2A_{33})(x_0).
\]
This proves \eqref{mat:derivativesmall}, and hence \eqref{main:Abounds}.

\subsection{A fourth derivative estimate}\label{fourth:appendix}

Uniformly for $(a,b,s)\in q\nabla\kappa(\Omega_{D,P,H})$,
\begin{equation}\label{fourth:pure}
 q^4\partial_a^4\ps=-qd z^3\bigl(1+O(L^{-2})\bigr).
\end{equation}
To prove this, put
\[
 r=\frac d\Delta,\qquad \mathcal D=\frac qz\partial_a,\qquad
 \Xi_m=n(1+\log n)^m+w(1+\log w)^m
       -\kappa(1+\log\kappa)^m-v(1+\log v)^m.
\]
Unlike $\partial_1$, the operator $\mathcal D$ has variable coefficients.
The root equation gives
$\Xi_0=\Xi_1=-d$ and $\Xi_2=\Delta$.
For $m=3,4$, differentiation at fixed $n,h$ gives
\begin{align*}
 \partial_d\Xi_m
 ={}&\frac{(1+\log\kappa)(1+\log w)}z
 \bigl[(1+\log w)^{m-1}-(1+\log\kappa)^{m-1}\\
 &\hspace{22mm}+m(1+\log w)^{m-2}-m(1+\log\kappa)^{m-2}\bigr]
 \ll L^m.
\end{align*}
Each difference of powers contains the factor $z$.
Since $\Xi_m=0$ at $d=0$, integration gives
\begin{equation}\label{fourth:moments}
 |\Xi_3|\ll dL^3,\qquad |\Xi_4|\ll dL^4,
 \qquad r\asymp L^{-2}.
\end{equation}
Equations \eqref{app:four-exponentials} and \eqref{mat:gradz} imply
\[
 \mathcal Dz=zr,\qquad
 \mathcal Dx=x\bigl(1+r(1+\log x)\bigr)
 \quad(x=n,w,\kappa,v).
\]
Thus, for $m=2,3$,
\[
 \mathcal D\Xi_m
 =\Xi_m+m\Xi_{m-1}+r(\Xi_{m+1}+m\Xi_m).
\]
In particular, $\mathcal D\Delta=\Delta+r\Xi_3$.
Also, $\mathcal D\ps=\ps$ and $d=-z\ps/q$, so $\mathcal Dd=d(1+r)$.
It follows that
\begin{align}
 \mathcal Dr
 &=r^2\left(1-\frac{\Xi_3}{\Delta}\right)=O(L^{-3}),\label{fourth:rho1}\\
 \mathcal D\left(\frac{\Xi_3}{\Delta}\right)
 &=3+r\left[\frac{\Xi_4}{\Delta}+3\frac{\Xi_3}{\Delta}
       -\left(\frac{\Xi_3}{\Delta}\right)^2\right]=O(1).
 \label{fourth:ratio}
\end{align}
Differentiating the first identity once more gives $\mathcal D^2r=O(L^{-4})$.
Since $q\partial_a=z\mathcal D$, the product rule gives
\begin{align*}
 q^2\partial_a^2\ps&=z^2\ps(1+r),\\
 q^3\partial_a^3\ps&=z^3\ps(1+3r+2r^2+\mathcal Dr),\\
 q^4\partial_a^4\ps
 &=z^4\ps\bigl(1+6r+11r^2+6r^3+(4+7r)\mathcal Dr+\mathcal D^2r\bigr).
\end{align*}
Together with $\ps=-qd/z$, these identities prove \eqref{fourth:pure}.

For a shift satisfying \eqref{main:shifts} and \eqref{main:dominant},
and for $x=(a,b,s)$ satisfying
\[
 \inf_{y\in\supp W}|S^{-1}(x-y)|<\tfrac12L^{-B_*},
\]
we obtain
\begin{equation}\label{fourth:shiftthird}
 q^3|\partial_a^3\ph|\asymp_L F_\ell,
 \qquad \sgn(\partial_a^3\ph)=-\sgn(\ell_1).
\end{equation}
By \eqref{app:shift-neighbourhood}, the shift segment $x+t\ell$,
$0\le t\le1$, lies in the $(a,b,s)$ neighborhood
\eqref{first:legal}, and its inverse image under $q\nabla\kappa$
lies in $\Omega_{D,P,H}$. Therefore
\begin{align*}
 q^3\partial_a^3\ph(x)
 &=-\ell_1\int_0^1d(x+t\ell)z(x+t\ell)^3
                    \bigl(1+O(L^{-2})\bigr)\,\dd t\\
 &\quad+O\left(L^B\frac NL(|\ell_2|+|\ell_3|)\right).
\end{align*}
The error follows from \eqref{app:dual-coarse} and is at most $CL^BN^{11/10}/L$.
The integral is positive and $\asymp_L D$.
For sufficiently large fixed $K_*$, the error is smaller than half the main term.
Since $F_\ell\asymp D|\ell_1|$ under \eqref{main:shifts} and
\eqref{main:dominant}, \eqref{fourth:shiftthird} follows.

\section{A multidimensional \texorpdfstring{$B$}{B}-process}
\label{app:uniform-stationary-phase}

Proposition~\ref{prop:multidimensional-B} below gives the common
$B$-process used in the text. We first prove the required uniform
integral estimates and frequency-tail bound. For the standard
stationary-phase expansion, see
H\"ormander~\cite[Section~7.7]{Hormander}.

\begin{lemma}[Nonstationary phase]\label{lem:nonstationary-phase}
Let $m\in\{2,3\}$ and let $\varphi$ be real and smooth on an open
set $\Omega\subset\mathbb R^m$. Let $k\ge1$ be a fixed integer, and suppose that
for some $0<p\ll_L1$,
\[
 \|(\nabla^2\varphi)^{-1}\|\ll_Lp^{-1},
 \qquad \|\partial^r\varphi\|\ll_L1\quad(2\le r\le k+1)
\]
on $\Omega$. If $\zeta\in C_c^\infty(\Omega)$ satisfies
$\operatorname{supp}\zeta\subset[-R_0,R_0]^m$ for a fixed $R_0>0$,
and $\nabla\varphi\ne0$ on $\operatorname{supp}\zeta$, then
\begin{equation}\label{eq:nonstationary-phase}
 \left|\int\zeta(u)\ee(T\varphi(u))\,du\right|
 \ll_{L,k}(Tp^2)^{-k}\|\zeta\|_{C^{2k}(\Omega)}\qquad(T\ge1).
\end{equation}
No uniform lower bound for $|\nabla\varphi|$ on
$\operatorname{supp}\zeta$ is required. In particular, $p=1$
gives the bound $O_{L,k}(T^{-k}\|\zeta\|_{C^{2k}})$.
\end{lemma}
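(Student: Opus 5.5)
The plan is to use the standard repeated integration by parts with the operator
\[
 \mathcal L\zeta=\frac1{2\pi i T}\nabla\cdot\!\left(\frac{\nabla\varphi}{|\nabla\varphi|^2}\,\zeta\right),
\]
applied $k$ times, so that $\int\zeta\,\ee(T\varphi)=\int(\mathcal L^{*})^k\zeta\,\ee(T\varphi)$. The one new point is that there is no lower bound for $|\nabla\varphi|$. I will therefore not use a pointwise lower bound. Instead I will split the support according to the size of $|\nabla\varphi|$ relative to the Hessian scale $p$, and exploit the fact that a nondegenerate Hessian forces $\nabla\varphi$ to grow linearly away from its zeros.

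\textbf{Step 1 (the Hessian controls the gradient).} Since $\|(\nabla^2\varphi)^{-1}\|\ll_Lp^{-1}$ and $\|\partial^3\varphi\|\ll_L1$, the map $\nabla\varphi$ is a local diffeomorphism with inverse Lipschitz constant $\ll p^{-1}$ on balls of radius $\asymp p$. I will use this only to get a quantitative statement about the set where the gradient is small. The cleaner route, however, avoids any geometric decomposition. Because $\zeta\in C_c^\infty(\Omega)$ and $\nabla\varphi\ne0$ on $\supp\zeta$, compactness gives some positive lower bound $\mu$ on $|\nabla\varphi|$ there. The bound $\mu$ may be tiny, but the estimate must not depend on it. So I will first try the approach of rewriting the integrand.

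\textbf{Step 2 (integrate by parts with the Hessian instead of the gradient).} Write $\ee(T\varphi)$ using the vector field
\[
 Y=(\nabla^2\varphi)^{-1}\nabla\varphi .
\]
When $|\nabla\varphi|$ is small, $Y$ is roughly the displacement to the critical point, and $Y\cdot\nabla\varphi=\nabla\varphi^T(\nabla^2\varphi)^{-1}\nabla\varphi$, which can vanish for indefinite Hessians. This route therefore fails without a sign condition. I will instead fall back on a smooth partition in $|\nabla\varphi|$. Choose a dyadic partition of unity
\[
 1=\sum_{\lambda}\chi\!\left(\frac{|\nabla\varphi|}{\lambda}\right)
\]
with $\lambda$ ranging over dyadic values from $\mu$ up to $O_L(1)$, and smooth $\chi$ supported in $[1/2,2]$.

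On the piece where $|\nabla\varphi|\asymp\lambda$:
\begin{itemize}
\item Each application of $\mathcal L^*$ costs a factor $(T\lambda)^{-1}$ times derivatives of the cutoff.
\item By the Hessian bound, $\nabla(|\nabla\varphi|)$ has size $\ll1$, so each derivative of $\chi(|\nabla\varphi|/\lambda)$ costs $\lambda^{-1}$.
\item Hence $k$ steps give the factor $(T\lambda^2)^{-k}$, multiplied by $\|\zeta\|_{C^{k}}$.
\end{itemize}
The measure of this piece is $\ll(\lambda/p)^m$, by Step 1 together with the coarea formula: the preimage under $\nabla\varphi$ of a ball of radius $2\lambda$ has volume $\ll\lambda^m\sup|\det\nabla^2\varphi|^{-1}\ll(\lambda/p)^m$, counting the $O(1)$ local branches on the fixed cube.

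This bound by itself is not summable as $\lambda\to0$. So for $\lambda\le p$ I will only use the trivial bound $(\lambda/p)^m\|\zeta\|_\infty$. This is still not of the form $(Tp^2)^{-k}$.

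\textbf{The main obstacle} is exactly this small-gradient region. I expect the resolution to be the following. On the region $|\nabla\varphi|\lesssim p$ the Hessian bound makes the phase look like a nondegenerate quadratic around a critical point $u_*$ lying in $\Omega$. Because $\zeta$ vanishes at and near $u_*$ (since $\nabla\varphi\ne0$ on $\supp\zeta$), one expands $\zeta$ to order $2k$ in Taylor form at $u_*$; the vanishing of $\zeta$ near $u_*$ forces all Taylor coefficients to be zero. One then uses the stationary-phase asymptotic expansion with remainder, which is uniform with the constants scaled by $p$, as in H\"ormander's Theorem~7.7.5 after the substitution $u=u_*+pv$. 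Every term of the expansion is zero, and the remainder is $O((Tp^2)^{-k}\|\zeta\|_{C^{2k}})$. This accounts for the $C^{2k}$ norm in \eqref{eq:nonstationary-phase}.

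For the region $\lambda\ge p$, the dyadic sum of $(T\lambda^2)^{-k}$ is dominated by its smallest term, $\lambda=p$, which gives $(Tp^2)^{-k}$. Combining the two regimes yields \eqref{eq:nonstationary-phase}.
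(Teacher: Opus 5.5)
Your treatment of the region where $|\nabla\varphi|\gtrsim p$ is fine: integrating by parts $k$ times on the pieces $|\nabla\varphi|\asymp\lambda$ gives $\sum_{\lambda\ge p}(T\lambda^2)^{-k}\ll(Tp^2)^{-k}$. The gap is the region $|\nabla\varphi|\lesssim p$. This is precisely what ``no uniform lower bound'' refers to, and the stationary-phase resolution you sketch there does not go through.

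\textbf{Why the stationary-phase step fails.} That every term of the expansion vanishes when $u_*\notin\supp\zeta$ is trivial. All of the content lies in the remainder, and you have not controlled it.
\begin{itemize}
\item After $u=u_*+pv$ one has $T\varphi(u)=T\varphi(u_*)+Tp^2\psi(v)$ with $\nabla^2\psi(v)=\nabla^2\varphi(u_*+pv)$. Its inverse is still only $O_L(p^{-1})$, so the phase is not normalized.
\item The rescaled amplitude $v\mapsto\zeta(u_*+pv)$ lives on a set of diameter up to $\asymp p^{-1}$, not on a fixed compact set.
\item H\"ormander's Theorem~7.7.5 needs $C^{3k+1}$ control of the phase, whereas the lemma assumes only derivatives of order $\le k+1$.
\item Its constant is only asserted to be bounded when $|u-u_*|/|\nabla\varphi(u)|$ is uniformly bounded, with unspecified dependence. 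Quantifying that dependence in terms of $p$ is exactly the statement to be proved, so the step is circular.
\end{itemize}

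\textbf{Why the geometric picture is unjustified.} From a point with $|\nabla\varphi|=\lambda$, Newton's method is guaranteed to find a nearby critical point only when $\lambda\lesssim p^2$ and a ball of radius $\asymp\lambda/p$ about that point lies in $\Omega$. This already uses third derivatives, which are not assumed when $k=1$. Points with $p^2\lesssim|\nabla\varphi|\lesssim p$ need not be close to any critical point. There may also be several critical points, or none in $\Omega$ at all, since no margin between $\supp\zeta$ and $\partial\Omega$ is assumed. Moreover, any cutoff isolating this region, such as $\tilde\chi(|\nabla\varphi|/p)$, costs $p^{-1}$ per derivative, i.e.\ up to $p^{-2k}$ in $C^{2k}$, and that loss would also have to be recovered. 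Relatedly, the hypotheses only separate critical points by $\gg p$, so your ``$O(1)$ branches'' is unproved, although that bound is not needed for $\lambda\ge p$.

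\textbf{The missing idea.} What you need is a pointwise statement that $\zeta$ is automatically small wherever $|\nabla\varphi|$ is small. Put $Q=|\nabla\varphi|^2$. For every smooth $F$ with $\supp F\subset\supp\zeta$,
\[
 \sup_{\supp\zeta}\frac{|F|}{Q^{j/2}}\ll_{L,j}p^{-j}\|F\|_{C^j(\Omega)}.
\]
Indeed, $|F|^2/Q^j$ attains its maximum on the compact set $\supp\zeta$, where $Q>0$. At a positive maximum its gradient vanishes, giving $\nabla F=jF\,\nabla^2\varphi\,\nabla\varphi/Q$. Applying $(\nabla^2\varphi)^{-1}$ yields $j|F|/Q^{1/2}\ll_L p^{-1}|\nabla F|$ at that point. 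Hence $\sup|F|/Q^{j/2}\ll_L(jp)^{-1}\sup|\nabla F|/Q^{(j-1)/2}$, and one iterates on the components of $\nabla F$, which are again supported in $\supp\zeta$.

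This is the quantitative form of your remark that $\zeta$ vanishes to infinite order at critical points, but it needs no critical point, no localization and no margin. With it, the paper integrates by parts $k$ times on all of $\supp\zeta$ with $\mathcal DF=\operatorname{div}(F\nabla\varphi/Q)$. Each term of $\mathcal D^k\zeta$ is $\partial^\alpha\zeta$, times derivatives of $\varphi$ of orders $2$ to $k+1$, times a function of $\nabla\varphi$ homogeneous of degree $-j$, with $|\alpha|+j\le 2k$. It is therefore $\ll_{L,k}p^{-j}\|\zeta\|_{C^{2k}}\ll p^{-2k}\|\zeta\|_{C^{2k}}$, since $p\ll_L1$. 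This is where both the factor $p^2$ and the $C^{2k}$ norm come from, and no dyadic decomposition is needed.
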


\begin{proof}
Put $Q=|\nabla\varphi|^2$. For an integer $j\ge1$ and a real smooth
function $F:\Omega\to\mathbb R$ with
$\operatorname{supp}F\subset\operatorname{supp}\zeta$, the maximum
of $|F|^2/Q^j$ over $\operatorname{supp}\zeta$ is attained.
All suprema of quotients involving $Q$ below are taken over
$\operatorname{supp}\zeta$, where $Q>0$.
At a positive maximum, differentiation gives
\[
 \nabla F=jF\frac{\nabla^2\varphi\,\nabla\varphi}{Q}.
\]
Multiplying by $(\nabla^2\varphi)^{-1}$ therefore yields
\[
 \sup\frac{|F|}{Q^{j/2}}
 \ll_L\frac1{jp}\sup\frac{|\nabla F|}{Q^{(j-1)/2}}.
\]
Applying this successively to the components of the derivatives gives
\begin{equation}\label{eq:nonstationary-quotient}
 \sup\frac{|F|}{Q^{j/2}}\ll_{L,j}p^{-j}\|F\|_{C^j(\Omega)}
 \qquad(j\ge0).
\end{equation}
The same bound holds for complex $F$ by taking real and imaginary parts.
The minimum of $Q$ on $\operatorname{supp}\zeta$ is positive,
but does not enter this bound.

On the open set $\{u\in\Omega:Q(u)>0\}$, which contains
$\operatorname{supp}\zeta$, set
$\mathcal D F=\operatorname{div}(F\nabla\varphi/Q)$.
Each $\mathcal D^j\zeta$ has support contained in
$\operatorname{supp}\zeta$ and is extended by zero to $\Omega$.
Integration by parts $k$ times gives
\[
 \int\zeta\,\ee(T\varphi)\,du
 =(-2\pi iT)^{-k}\int\mathcal D^k\zeta\,\ee(T\varphi)\,du.
\]
Each term of $\mathcal D^k\zeta$ is a product of derivatives of
$\varphi$ of orders between two and $k+1$, a derivative
$\partial^\alpha\zeta$, and a smooth homogeneous function of
$\nabla\varphi$ of degree $-j$, where $|\alpha|+j\le2k$.
Equation~\eqref{eq:nonstationary-quotient}, applied to
$\partial^\alpha\zeta$, bounds this term by
$O_{L,k}(p^{-j}\|\zeta\|_{C^{2k}(\Omega)})$.
Since $p\ll_L1$ and $0\le j\le2k$, this is
$O_{L,k}(p^{-2k}\|\zeta\|_{C^{2k}(\Omega)})$.
Integrating over the fixed box $[-R_0,R_0]^m$ proves
\eqref{eq:nonstationary-phase}.
\end{proof}

\begin{lemma}\label{lem:uniform-stationary-phase}
Let $m\in\{2,3\}$, let $\Omega\subset\mathbb R^m$ be open, and let
$K\subset\Omega$ be compact with $K\subset[-R_0,R_0]^m$ for a fixed $R_0>0$.
Suppose that $\operatorname{dist}(K,\Omega^c)\ge c>0$, where $c$ is
fixed. Let $\varphi$ be real and smooth on $\Omega$, with at most one
stationary point in $\Omega$, and assume throughout $\Omega$ that
\[
 \|(\nabla^2\varphi)^{-1}\|\ll_L1,
 \qquad \|\partial^r\varphi\|\ll_L1\quad(2\le r\le12).
\]
For $\zeta\in C_c^\infty(\Omega)$ with
$\operatorname{supp}\zeta\subset K$ and $\|\zeta\|_{C^{12}(\Omega)}\ll_L1$,
we have, uniformly for $T\ge1$,
\begin{equation}\label{eq:uniform-SP}
 \int \zeta(u)\ee(T\varphi(u))\,du
 =\frac{\zeta(u_*)\ee(T\varphi(u_*)+\sigma/8)}
        {T^{m/2}\sqrt{|\det\nabla^2\varphi(u_*)|}}
        +O_L(T^{-m/2-1}).
\end{equation}
Here $u_*$ is the stationary point of $\varphi$ in $\Omega$,
$\sigma$ is the signature of $\nabla^2\varphi(u_*)$, and the main
term is zero if no stationary point exists in $\Omega$ or if
$u_*\notin K$.
\end{lemma}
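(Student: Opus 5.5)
We prove Lemma~\ref{lem:uniform-stationary-phase} by the standard localization to a neighbourhood of the stationary point, Morse coordinates there, and Lemma~\ref{lem:nonstationary-phase} elsewhere. The one point needing care is uniformity: every constant must depend only on the $O_L(1)$ bounds and on the fixed margin $c$.

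\emph{Localization.} Fix a small $r_0>0$ depending only on these bounds. If $u_*$ exists, the inverse Hessian bound and the third-derivative bound give
\[
 |\nabla\varphi(u)|\gg_L|u-u_*|\qquad\text{for } |u-u_*|\le r_0 .
\]
Choose a fixed smooth cutoff $\chi$ equal to $1$ on $B(u_*,r_0/2)$ and supported in $B(u_*,r_0)$, and split $\zeta=\chi\zeta+(1-\chi)\zeta$.

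\emph{The piece $(1-\chi)\zeta$.} On its support there is no stationary point, since $u_*$ is the unique one in $\Omega$. Lemma~\ref{lem:nonstationary-phase} with $p=1$ and $k=2$ applies, because its hypotheses need only derivatives of $\varphi$ up to order $3$, and gives $O_L(T^{-2})$. This is $\le T^{-m/2-1}$ for $m\le2$. For $m=3$ we need $T^{-5/2}$, so we take $k=3$. This requires $\|\zeta\|_{C^6}$, which is available since $\zeta\in C^{12}$, and derivatives of $\varphi$ up to order $4\le12$.

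The same argument covers the case where no stationary point lies in $\Omega$: take $\chi\equiv0$. It also covers $u_*\in\Omega\setminus K$ with $\operatorname{dist}(u_*,K)\ge r_0$, where $\chi\zeta=0$. In these cases the main term is zero, as claimed. If $u_*\notin K$ but lies within $r_0$ of $K$, the main term $\zeta(u_*)=0$ anyway, and the local expansion below gives the result directly.

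\emph{The piece $\chi\zeta$.} On $B(u_*,r_0)$, contained in $\Omega$, write
\[
 \varphi(u)=\varphi(u_*)+\tfrac12 Q(u-u_*)+(\text{cubic remainder}),
\]
where $Q=\nabla^2\varphi(u_*)$. The Morse lemma, in its quantitative form via $\varphi(u)-\varphi(u_*)=\frac12(u-u_*)^T M(u)(u-u_*)$ with $M$ an integral of the Hessian, supplies a diffeomorphism $y=\Phi(u)$ such that
\[
 \varphi=\varphi(u_*)+\tfrac12y^TQy,
\]
with $\Phi$ and its inverse controlled in $C^{10}$ by the given bounds, since $r_0$ is small. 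Then
\[
 \int\chi\zeta\,\ee(T\varphi)
 =\ee(T\varphi(u_*))\int g(y)\,\ee\!\left(\tfrac T2y^TQy\right)dy,
\]
where $g=(\chi\zeta)\circ\Phi^{-1}\cdot|\det D\Phi^{-1}|$ has $C^{8}$ norm $O_L(1)$.

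Next apply Plancherel to the quadratic phase. The Fourier transform of $\ee(\frac T2y^TQy)$ is
\[
 T^{-m/2}|\det Q|^{-1/2}\ee(\sigma/8)\,\ee\!\left(-\tfrac1{2T}\xi^TQ^{-1}\xi\right).
\]
Expanding $\ee(-\xi^TQ^{-1}\xi/(2T))=1+O(|\xi|^2/T)$ and using $\int|\widehat g(\xi)||\xi|^2\,d\xi\ll\|g\|_{C^{m+3}}\ll_L1$ gives
\[
 T^{-m/2}|\det Q|^{-1/2}\ee(\sigma/8)\bigl(g(0)+O_L(T^{-1})\bigr).
\]
Since $g(0)=\zeta(u_*)$, this is \eqref{eq:uniform-SP}. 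The signature of $Q$ and the bound $|\det Q|^{-1}\ll_L1$ both come from the inverse Hessian bound.

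\emph{Main obstacle.} The hard step is making the Morse change of variables uniform: showing that its radius $r_0$ and all derivative bounds depend only on the $\ll_L$ constants. I would avoid the full Morse lemma and instead use the substitution $y=M(u)^{1/2}Q^{-1/2}$-type map, built as a matrix square root of a small perturbation of the identity. Its derivatives are then controlled by power series, with a Neumann series supplying the inverse.
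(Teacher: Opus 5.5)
Your proposal is correct and is essentially the paper's proof. The common steps are: a cutoff around $u_*$ at a radius that is a fixed negative power of $L$; Lemma~\ref{lem:nonstationary-phase} with $p=1$ away from $u_*$, where no gradient lower bound is needed, so your estimate $|\nabla\varphi|\gg_L|u-u_*|$ is superfluous; the change of variables $u-u_*\mapsto P(u)(u-u_*)$ with $P=(I+Q^{-1}(M-Q))^{1/2}$, which satisfies $P^TQP=M$ also for indefinite $Q$, unlike a literal $M^{1/2}Q^{-1/2}$; and the Gaussian Fourier identity with $\ee(-\xi^TQ^{-1}\xi/(2T))=1+O(|\xi|^2/T)$. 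The differences are cosmetic: the paper rescales by $r=c_1L^{-C}$ instead of working on an unrescaled ball, and it disposes of the case $u_*\notin K$ directly by nonstationary phase with $k=4$.
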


\begin{proof}
If there is no stationary point in $K$,
Lemma~\ref{lem:nonstationary-phase} with $p=1$ and $k=4$ proves the assertion.
Otherwise put $H_*=\nabla^2\varphi(u_*)$ and $Q_m=[-1,1]^m$.
Choose a fixed $\chi_{\mathrm{sp}}\in C_c^\infty(\operatorname{int}Q_m)$
equal to one on $\tfrac12Q_m$. Take $r=c_1L^{-C}$, with $c_1>0$
small and $C$ fixed and sufficiently large.
Then $u_*+4rQ_m\subset\Omega$.
Split the amplitude $\zeta$ as
\[
 \zeta(u)\chi_{\mathrm{sp}}((u-u_*)/r)
 +\zeta(u)\bigl(1-\chi_{\mathrm{sp}}((u-u_*)/r)\bigr).
\]
The function
$u\mapsto\zeta(u)(1-\chi_{\mathrm{sp}}((u-u_*)/r))$ has support
contained in
$\operatorname{supp}\zeta\setminus\operatorname{int}(u_*+\tfrac r2Q_m)$
and has $C^8(\Omega)$ norm $O_L(1)$. The integral with this amplitude is
$O_L(T^{-4})$ by Lemma~\ref{lem:nonstationary-phase} with $p=1$.

For the integral with amplitude
$u\mapsto\zeta(u)\chi_{\mathrm{sp}}((u-u_*)/r)$, set
\[
 w=(u-u_*)/r,\qquad
 \psi(w)=\frac{\varphi(u_*+rw)-\varphi(u_*)}{r^2},\qquad
 H(w)=2\int_0^1(1-t)\nabla^2\psi(tw)\,dt.
\]
Taylor's formula gives $\psi(w)=w^TH(w)w/2$ and $H(0)=H_*$.
The derivative bounds and the choice of $r$ give
\[
 \|H_*^{-1}(H-H_*)\|_{C^9(4Q_m)}=o(1).
\]
The matrix power series
$P(w)=(I+H_*^{-1}(H(w)-H_*))^{1/2}$ consequently converges and
satisfies $P(w)^TH_*P(w)=H(w)$. Indeed, symmetry of $H$ and $H_*$
gives $P^TH_*=H_*P$ by the same power series, and hence
$P^TH_*P=H_*P^2=H$. This also applies when $H_*$ is indefinite.

The map $w\mapsto P(w)w$ has derivative $I+o(1)$ on the convex
box $4Q_m$. It is injective on $4Q_m$, and its inverse on the image
of $\operatorname{int}(4Q_m)$ has bounded derivatives through order
nine. The change of variables transforms the amplitude
$w\mapsto\zeta(u_*+rw)\chi_{\mathrm{sp}}(w)$, including the Jacobian,
into an amplitude $b$. The support of $b$ is contained in
$\{P(w)w:w\in\operatorname{supp}\chi_{\mathrm{sp}}\}$, a compact
subset of the image of $\operatorname{int}(4Q_m)$ under
$w\mapsto P(w)w$, contained in $[-C,C]^m$ for a fixed $C>0$.
Thus $b$ extends by zero to a function in $C_c^8(\mathbb R^m)$ with
$\|b\|_{C^8(\mathbb R^m)}\ll_L1$ and $b(0)=\zeta(u_*)$.
With $\widehat b(\xi)=\int b(v)\ee(-\xi\cdot v)\,dv$, the Gaussian
identity gives, for every $t>0$,
\begin{equation}\label{eq:stationary-gaussian}
 \int b(v)\ee(tv^TH_*v/2)\,dv
 =\frac{\ee(\sigma/8)}{t^{m/2}\sqrt{|\det H_*|}}
   \int\widehat b(\xi)
       \ee\left(-\frac{\xi^TH_*^{-1}\xi}{2t}\right)d\xi.
\end{equation}
It follows first with Gaussian damping and then by passage to the
limit. Since
$\int(1+|\xi|^2)|\widehat b(\xi)|\,d\xi\ll_L1$, Fourier inversion
and $|\ee(s)-1|\le2\pi|s|$ give the main term in
\eqref{eq:uniform-SP} after taking $t=Tr^2$ and multiplying by
$r^m\ee(T\varphi(u_*))$. The error is
$O_L(r^{-2}T^{-m/2-1})=O_L(T^{-m/2-1})$.
\end{proof}

\begin{corollary}\label{cor:absolute-stationary-phase}
Let $m\in\{2,3\}$, $Q_m=[-1,1]^m$, and let $\varphi$ be real and
smooth on an open set containing $4Q_m$.
Suppose that $u_*\in4Q_m$ satisfies $\nabla\varphi(u_*)=0$
and that $B=\nabla^2\varphi(u_*)$ is nonsingular.
There is an absolute constant $c_m>0$ such that, if
\[
 \bigl\|B^{-1}(\nabla^2\varphi-B)\bigr\|_{C^9(4Q_m)}\le c_m,
\]
then every $\zeta\in C_c^\infty(\operatorname{int}Q_m)$ satisfies
\begin{equation}\label{eq:absolute-SP}
 \left|\int\zeta(u)\ee(\varphi(u))\,du\right|
 \ll \frac{\|\zeta\|_{C^8(Q_m)}}{\sqrt{|\det B|}}.
\end{equation}
The implied constant depends only on $m$.
\end{corollary}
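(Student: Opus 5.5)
The plan is to reduce the corollary to Lemma~\ref{lem:uniform-stationary-phase} by a linear normalization that absorbs the Hessian. First, the smallness hypothesis rules out a second stationary point in $4Q_m$. On the convex box $4Q_m$,
\[
 B^{-1}\bigl(\nabla\varphi(u)-\nabla\varphi(u')\bigr)-(u-u')
 =\int_0^1B^{-1}\bigl(\nabla^2\varphi(u'+t(u-u'))-B\bigr)(u-u')\,\dd t,
\]
which has norm at most $c_m|u-u'|$. For $c_m<1$ this forces $\nabla\varphi(u)\ne\nabla\varphi(u')$ whenever $u\ne u'$. Hence $u_*$ is the unique stationary point, and the same estimate gives $|B^{-1}\nabla\varphi(u)|\ge(1-c_m)|u-u_*|$.

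Next we would normalize. The difficulty is that $B$ can be arbitrarily large or anisotropic, while Lemma~\ref{lem:uniform-stationary-phase} needs $\|(\nabla^2\varphi)^{-1}\|\ll1$ and $\|\partial^r\varphi\|\ll1$ with $T\ge1$. We therefore avoid the lemma's form directly and rerun its Morse-lemma argument with $B$ in place of $H_*$, taking $r=1$ and $t=1$. Write
\[
 \varphi(u)=\varphi(u_*)+\tfrac12(u-u_*)^TH(u)(u-u_*),\qquad
 H(u)=2\int_0^1(1-t)\nabla^2\varphi(u_*+t(u-u_*))\,\dd t .
\]
The hypothesis gives $\|B^{-1}(H-B)\|_{C^9(4Q_m)}\ll c_m$, because averaging along segments inside the convex box preserves $C^9$ bounds. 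For small $c_m$ the power series $P=(I+B^{-1}(H-B))^{1/2}$ converges in $C^9$ with $\|P-I\|_{C^9}\ll c_m$, and it satisfies $P^TBP=H$, exactly as in that proof. The map $y(u)=P(u)(u-u_*)$ then has derivative $I+O(c_m)$ on $4Q_m$, so it is a diffeomorphism onto its image with inverse bounded in $C^9$ by absolute constants.

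Under this change of variables,
\[
 \int\zeta\,\ee(\varphi)\,\dd u
 =\ee(\varphi(u_*))\int b(y)\,\ee\bigl(\tfrac12y^TBy\bigr)\,\dd y,
\]
where $b=(\zeta\circ u)\cdot|\det\partial u/\partial y|$. This $b$ is compactly supported in a fixed box $[-C,C]^m$ and satisfies $\|b\|_{C^8}\ll\|\zeta\|_{C^8}$. We then apply the Gaussian identity \eqref{eq:stationary-gaussian} with $t=1$ and $H_*=B$:
\[
 \Bigl|\int b\,\ee\bigl(\tfrac12y^TBy\bigr)\,\dd y\Bigr|
 \le\frac{\|\widehat b\|_{L^1}}{\sqrt{|\det B|}} .
\]
Finally, $\|\widehat b\|_{L^1}\ll\int(1+|\xi|)^{-m-1}\,\dd\xi\cdot\|b\|_{C^{m+1}}\ll\|\zeta\|_{C^8}$, since $m+1\le4\le8$ and $b$ has fixed compact support.

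One gap remains. The statement allows $u_*\in4Q_m$, possibly outside $Q_m$, but the change of variables above is global and covers $\operatorname{supp}\zeta\subset Q_m$ regardless of where $u_*$ lies in $4Q_m$. Hence no case split is needed, and the bound $\|\zeta\|_{C^8}/\sqrt{|\det B|}$ holds uniformly with constants depending only on $m$.

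The main obstacle I expect is verifying that the Morse change of variables stays uniform with constants depending only on $m$, for arbitrary nonsingular, possibly indefinite, $B$. The key point is that every estimate is phrased through the relative quantity $B^{-1}(\nabla^2\varphi-B)$ rather than through $\|B\|$ or $\|B^{-1}\|$ separately. The $C^9$ control of $P$ and of the Jacobian should follow from Banach-algebra bounds on the power series. This requires $c_m$ to be small relative to the convergence radius and to the constants from products in $C^9$.
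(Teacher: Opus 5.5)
Your proposal is correct and follows the paper's proof essentially step for step. Both use Taylor's formula with the averaged Hessian $H$, the square-root power series $P=(I+B^{-1}(H-B))^{1/2}$ with $P^TBP=H$, the global change of variables $u\mapsto P(u)(u-u_*)$ on $4Q_m$ with derivative $I+O_m(c_m)$, and the Gaussian identity \eqref{eq:stationary-gaussian} at $t=1$ with $\|\widehat b\|_{L^1}\ll_m\|\zeta\|_{C^8}$. Your preliminary uniqueness argument for the stationary point is harmless but not needed.
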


\begin{proof}
Since $4Q_m$ is convex, $u_*+t(u-u_*)\in4Q_m$ for
$u\in4Q_m$ and $0\le t\le1$. On $4Q_m$ put
\[
 H(u)=2\int_0^1(1-t)
   \nabla^2\varphi(u_*+t(u-u_*))\,dt.
\]
Taylor's formula gives
$\varphi(u)-\varphi(u_*)=(u-u_*)^TH(u)(u-u_*)/2$.
The relative $C^9$ bound implies
$\|B^{-1}(H-B)\|_{C^9(4Q_m)}\ll_m c_m$.
For sufficiently small $c_m$, the same matrix power series gives
$P(u)=(I+B^{-1}(H(u)-B))^{1/2}$ with $P(u)^TBP(u)=H(u)$.
Since $|u-u_*|\le8\sqrt m$ on $4Q_m$, the map
$u\mapsto P(u)(u-u_*)$ has derivative $I+O_m(c_m)$.
For sufficiently small $c_m$, it is injective on $4Q_m$, and its
inverse on the image of $\operatorname{int}(4Q_m)$ has bounded
derivatives through order nine. Under this change of variables,
the amplitude $\zeta$, including the Jacobian, gives an amplitude
$b$ whose support is contained in
$\{P(u)(u-u_*):u\in\operatorname{supp}\zeta\}$.
This set is a compact subset of the image of
$\operatorname{int}(4Q_m)$ under $u\mapsto P(u)(u-u_*)$ and lies in $[-C,C]^m$
for a fixed $C>0$.
Hence $b$ extends by zero to $\mathbb R^m$ with
$\|b\|_{C^8(\mathbb R^m)}\ll_m\|\zeta\|_{C^8(Q_m)}$, so
$\|\widehat b\|_{L^1(\mathbb R^m)}\ll_m\|\zeta\|_{C^8(Q_m)}$.
Apply \eqref{eq:stationary-gaussian} with $t=1$ and $H_*=B$, and
take absolute values. This introduces no additional dependence on
$\|B^{-1}\|$.
\end{proof}

\begin{lemma}[Summation outside a frequency box]
\label{lem:frequency-tail}
Let $m\in\{2,3\}$, let $K>m$ be an integer, and let
$U\subset\mathbb R^m$ be a fixed bounded open box.
Let $\zeta\in C_c^\infty(U)$ and let $\varphi:U\to\mathbb R$
be smooth. Suppose that $T\ge1$ and
\[
 \|\partial^\alpha\zeta\|_{L^\infty(U)}\ll_L1
       \quad(|\alpha|\le K),\qquad
 \|\partial^\alpha\varphi\|_{L^\infty(U)}\ll_LT
       \quad(2\le|\alpha|\le K+1).
\]
Let $X_1,\ldots,X_m>0$, put
$\mathsf X=\diag(X_1,\ldots,X_m)$, and let $\xi\in\mathbb R^m$.
Assume that, for some $A=CL^B\ge1$ with fixed $C,B$,
\[
 |\nabla\varphi(u)-\mathsf X\xi|_\infty\le AT
 \qquad(u\in\supp\zeta).
\]
For $\nu\in\mathbb Z^m$, define
\[
 I_\nu=(\det\mathsf X)\int_U
 \zeta(u)\ee\bigl(\varphi(u)-(\mathsf X\nu)\cdot u\bigr)\,du,
\]
and, for $j\ge0$, define the frequency boxes
\[
 \mathcal B_j=\left\{\nu\in\mathbb R^m:
 |X_i(\nu_i-\xi_i)|\le2^{j+1}AT\quad(1\le i\le m)\right\}.
\]
Then
\begin{equation}\label{eq:frequency-tail-mode}
 |I_\nu|\ll_{L,K}
 (\det\mathsf X)T^{-K}2^{-Kj}
 \qquad
 \bigl(\nu\in(\mathcal B_{j+1}\setminus\mathcal B_j)
                    \cap\mathbb Z^m\bigr),
\end{equation}
and
\begin{equation}\label{eq:frequency-tail-sum}
 \sum_{\nu\in\mathbb Z^m\setminus\mathcal B_0}|I_\nu|
 \ll_{L,K}(\det\mathsf X)T^{-K}
            \prod_{i=1}^m\left(1+\frac{T}{X_i}\right).
\end{equation}
The bounds are uniform in $T$, $X_1,\ldots,X_m$, and $\xi$;
the implied constants depend only on $m,K,U$ and the constants
in the hypotheses.
\end{lemma}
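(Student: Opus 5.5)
The estimate will come from repeated integration by parts in each coordinate. The two inputs are the gradient condition, which keeps $\nabla\varphi$ close to $\mathsf X\xi$, and the assumption that $\nu$ lies outside $\mathcal B_j$, which keeps $\mathsf X\nu$ far from $\mathsf X\xi$.

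First I fix $\nu\in(\mathcal B_{j+1}\setminus\mathcal B_j)\cap\mathbb Z^m$. Since $\nu\notin\mathcal B_j$, there is an index $i$ with $|X_i(\nu_i-\xi_i)|>2^{j+1}AT$. For $u\in\supp\zeta$ the hypothesis gives $|\partial_i\varphi(u)-X_i\xi_i|\le AT$. Hence the $i$th partial derivative of the phase $\phi_\nu(u)=\varphi(u)-(\mathsf X\nu)\cdot u$ satisfies
\[
 |\partial_i\phi_\nu(u)|\ge |X_i(\nu_i-\xi_i)|-AT\ge\tfrac12|X_i(\nu_i-\xi_i)|\ge 2^{j}AT .
\]
Because the phase is linear in $\nu$, the higher derivatives of $\phi_\nu$ coincide with those of $\varphi$. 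These are $\ll_LT$ for orders $2$ to $K+1$.

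Next I integrate by parts $K$ times in the single variable $u_i$. I use the operator $F\mapsto\partial_i\bigl(F/(2\pi i\,\partial_i\phi_\nu)\bigr)$; there are no boundary terms because $\zeta$ is compactly supported in $U$. After $K$ steps the integrand is a sum of terms of the form
\[
 \partial_i^{a}\zeta\cdot\prod\partial_i^{r_s}\varphi\cdot(\partial_i\phi_\nu)^{-K-t}.
\]
Here $t$ is the number of factors with $r_s\ge2$, and $a+\sum(r_s-1)=K$. With $\lambda=|\partial_i\phi_\nu|\ge2^jAT\ge T$, each such term is bounded by
\[
 \ll_L T^{t}\lambda^{-K-t}\le\lambda^{-K}\ll (2^jAT)^{-K}.
\]
Integrating over the bounded box $U$ and multiplying by $\det\mathsf X$ gives \eqref{eq:frequency-tail-mode}. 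The factor $A^{-K}\ll1$ can be absorbed, since $A\ge1$. This is the standard one-variable non-stationary phase bound, made uniform: the only inputs are the derivative bounds and the lower bound for $\lambda$.

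For \eqref{eq:frequency-tail-sum} I sum \eqref{eq:frequency-tail-mode} over the dyadic shells. The box $\mathcal B_{j+1}$ has side lengths $2^{j+3}AT/X_i$. It therefore contains at most $\prod_i(1+2^{j+3}AT/X_i)\ll_L 2^{mj}\prod_i(1+T/X_i)$ lattice points, and the powers of $A$ are absorbed into $\ll_L$. Since $\mathbb Z^m\setminus\mathcal B_0$ is the union of the shells, summing gives
\[
 (\det\mathsf X)\,T^{-K}\prod_i\Bigl(1+\frac{T}{X_i}\Bigr)\sum_{j\ge0}2^{(m-K)j},
\]
and the geometric series converges because $K>m$.

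The only point needing care is to check that every $\nu$ outside $\mathcal B_0$ really lies in some shell $\mathcal B_{j+1}\setminus\mathcal B_j$. This holds because the boxes $\mathcal B_j$ increase and exhaust $\mathbb R^m$. I do not expect a genuine obstacle here; the main work is the bookkeeping in the integration-by-parts expansion.
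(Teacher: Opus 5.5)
Your proposal is correct and follows essentially the paper's argument: choose a coordinate $i$ with $|X_i(\nu_i-\xi_i)|>2^{j+1}AT$, integrate by parts $K$ times in $u_i$ using $|\partial_i\varphi-X_i\nu_i|\ge 2^jAT$ on $\supp\zeta$, then sum over the dyadic shells with the lattice-point count $\ll_L 2^{mj}\prod_i(1+T/X_i)$ and the convergent series $\sum_j 2^{(m-K)j}$. The differences are cosmetic: the paper first bounds $\partial_i^r(g_\nu^{-1})$ and applies the product rule instead of expanding $\mathcal D_\nu^K\zeta$ term by term, and it states explicitly that $\partial_i\phi_\nu\ne0$ on an open neighbourhood of $\supp\zeta$, which makes the iterated operator well defined; your argument uses this implicitly.
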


\begin{proof}
Fix $\nu\in(\mathcal B_{j+1}\setminus\mathcal B_j)\cap\mathbb Z^m$.
For at least one coordinate $i$,
\[
 |X_i(\nu_i-\xi_i)|>2^{j+1}AT.
\]
Consequently, throughout $\supp\zeta$,
\begin{align*}
 |\partial_i\varphi(u)-X_i\nu_i|
 &\ge |X_i(\nu_i-\xi_i)|
           -|\partial_i\varphi(u)-X_i\xi_i|\\
 &\ge(2^{j+1}-1)AT\ge2^jAT.
\end{align*}
This coordinate $i$ is fixed for the entire integral.

Put $g_\nu(u)=\partial_i\varphi(u)-X_i\nu_i$.
For $1\le r\le K$, the derivative hypotheses give
$|\partial_i^r g_\nu|\ll_LT$.
Differentiating $g_\nu^{-1}$ therefore gives
\[
 |\partial_i^r(g_\nu^{-1})|
 \ll_{L,K}(2^jT)^{-1}\qquad(0\le r\le K)
\]
on $\supp\zeta$.
On the open set $\{u\in U:g_\nu(u)\ne0\}$, which contains
$\supp\zeta$, set
\[
 \mathcal D_\nu F=\partial_i(F/g_\nu).
\]
Each $\mathcal D_\nu^r\zeta$ is supported in $\supp\zeta$ and
is extended by zero to $U$.
Since
\[
 \partial_i\ee\bigl(\varphi(u)-(\mathsf X\nu)\cdot u\bigr)
 =2\pi i g_\nu(u)
       \ee\bigl(\varphi(u)-(\mathsf X\nu)\cdot u\bigr),
\]
integration by parts $K$ times yields
\[
 I_\nu=(\det\mathsf X)(-2\pi i)^{-K}
 \int_U\mathcal D_\nu^K\zeta(u)
       \ee\bigl(\varphi(u)-(\mathsf X\nu)\cdot u\bigr)\,du.
\]
There are no boundary terms because $\zeta$ is compactly supported
in $U$. The product rule and the preceding bounds imply
\[
 \|\mathcal D_\nu^K\zeta\|_{L^\infty(U)}
 \ll_{L,K}(2^jT)^{-K},
\]
which proves \eqref{eq:frequency-tail-mode}.

The $i$th side length of $\mathcal B_{j+1}$ is
$2^{j+3}AT/X_i$. Hence
\begin{align*}
 \#\bigl((\mathcal B_{j+1}\setminus\mathcal B_j)
                   \cap\mathbb Z^m\bigr)
 &\le\#(\mathcal B_{j+1}\cap\mathbb Z^m)\\
 &\ll\prod_{i=1}^m\left(1+\frac{2^{j+3}AT}{X_i}\right)\\
 &\ll_L2^{mj}\prod_{i=1}^m\left(1+\frac{T}{X_i}\right).
\end{align*}
The disjoint sets $\mathcal B_{j+1}\setminus\mathcal B_j$,
$j\ge0$, cover $\mathbb R^m\setminus\mathcal B_0$.
Combining this count with \eqref{eq:frequency-tail-mode} gives
\[
 \sum_{\nu\in\mathbb Z^m\setminus\mathcal B_0}|I_\nu|
 \ll_{L,K}(\det\mathsf X)T^{-K}
 \prod_{i=1}^m\left(1+\frac{T}{X_i}\right)
 \sum_{j\ge0}2^{-j(K-m)}.
\]
The geometric series converges because $K>m$, proving
\eqref{eq:frequency-tail-sum}.
\end{proof}

\begin{proposition}[A multidimensional $B$-process]
\label{prop:multidimensional-B}
Let $m\in\{2,3\}$, let $U\subset\mathbb R^m$ be a fixed bounded
open box, and let $K\subset U$ be a fixed compact set with
$\operatorname{dist}(K,U^c)>0$.
Let $c\in\mathbb R^m$, $\mathsf X=\diag(X_1,\ldots,X_m)$ with
$X_i>0$, and $T\ge1$.
Suppose that $f$ is real and smooth on $c+\mathsf XU$, and that
$w\in C_c^\infty(c+\mathsf XU)$.
Write
\[
 \varphi(u)=f(c+\mathsf Xu),\qquad
 \zeta(u)=w(c+\mathsf Xu),
\]
and assume that
\[
 \supp\zeta\subset K,\qquad
 \|\zeta\|_{C^{16}(U)}\ll_L1,\qquad
 \|\partial^r\varphi\|_{L^\infty(U)}\ll_LT
 \quad(2\le r\le12).
\]
Suppose also that, for some $\xi\in\mathbb R^m$ and
$A=CL^B\ge1$ with fixed $C,B$,
\[
 |\nabla\varphi(u)-\mathsf X\xi|_\infty\le AT
 \qquad(u\in\supp\zeta),
\]
and put
\[
 \mathcal B=\{k\in\mathbb R^m:
       |\mathsf X(k-\xi)|_\infty\le2AT\}.
\]
The physical-space weighted summand is extended by zero outside
$c+\mathsf XU$.

\smallskip
\noindent\textup{(i) Uniformly nondegenerate Hessian.}
Assume that $\nabla f$ is injective on $c+\mathsf XU$ and that
\[
 \bigl\|(T^{-1}\nabla^2\varphi(u))^{-1}\bigr\|\ll_L1
 \qquad(u\in U).
\]
For $k\in\nabla f(c+\mathsf XU)$, let $x(k)$ be the unique solution
of $\nabla f(x)=k$ in $c+\mathsf XU$, and let $\sigma$ be the
constant signature of $\nabla^2f$ on this box. Then
\begin{equation}\label{eq:general-B-formula}
 \sum_{n\in\mathbb Z^m}w(n)\ee(f(n))
 =\sum_{k\in\mathcal B\cap\mathbb Z^m}
   \frac{w(x(k))\ee\bigl(f(x(k))-k\cdot x(k)+\sigma/8\bigr)}
        {\sqrt{|\det\nabla^2f(x(k))|}}+E,
\end{equation}
where a summand is zero if $k\notin\nabla f(c+\mathsf XU)$, and
\begin{equation}\label{eq:general-B-error}
 |E|\ll_L(\det\mathsf X)T^{-m/2-1}
          \prod_{i=1}^m\left(1+\frac{T}{X_i}\right).
\end{equation}
The weighted dual summand extends smoothly by zero outside the
gradient image and is supported in $\nabla f(\supp w)$.
Thus the dual sum may equivalently be taken over all integer
frequencies, or over any set containing $\nabla f(\supp w)$.
No convexity of the gradient image is required.
In particular,
\begin{equation}\label{eq:general-B-bound}
 \left|\sum_{n\in\mathbb Z^m}w(n)\ee(f(n))\right|
 \ll_L(\det\mathsf X)T^{-m/2}
          \prod_{i=1}^m\left(1+\frac{T}{X_i}\right).
\end{equation}

\smallskip
\noindent\textup{(ii) An absolute bound for a nearly constant Hessian.}
Instead of the additional assumptions in \textup{(i)}, suppose that
$U=(-4,4)^m$, $K=[-1,1]^m$, and $\supp\zeta\subset(-1,1)^m$.
Assume that $f$ is smooth on an open neighborhood of
$c+\mathsf X[-4,4]^m$, and put
\[
 H_0=\mathsf X\nabla^2f(c)\mathsf X=\nabla^2\varphi(0).
\]
Suppose that $H_0$ is nonsingular and, for some $0<p\ll_L1$,
\[
 \|H_0^{-1}\|\ll_L(Tp)^{-1},\qquad
 \bigl\|H_0^{-1}(\nabla^2\varphi-H_0)\bigr\|_{C^9([-4,4]^m)}
       \le \varepsilon_m,
\]
where $\varepsilon_m>0$ is sufficiently small and depends only on $m$.
Then
\begin{equation}\label{eq:general-B-nearconstant}
 \left|\sum_{n\in\mathbb Z^m}w(n)\ee(f(n))\right|
 \ll_L(\det\mathsf X)
       \prod_{i=1}^m\left(1+\frac{T}{X_i}\right)
       \left\{|\det H_0|^{-1/2}+(Tp^2)^{-8}\right\}.
\end{equation}
All implied constants are uniform in $c,\mathsf X,T,\xi$ and,
in \textup{(ii)}, $p$; they depend only on the fixed sets and the
constants in the stated hypotheses.
\end{proposition}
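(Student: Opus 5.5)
The plan is to reduce both parts to the standard Poisson summation plus stationary phase scheme, feeding in the three lemmas just proved. After the substitution $x=c+\mathsf Xu$, the sum becomes $\sum_n w(n)\ee(f(n))$. Poisson summation writes it as $\sum_{k\in\mathbb Z^m}\widehat{\ }$-type integrals
\[
 \int w(x)\ee(f(x)-k\cdot x)\,dx
 =(\det\mathsf X)\int_U\zeta(u)\ee\bigl(\varphi(u)-(\mathsf Xk)\cdot u\bigr)\,du
 =I_k .
\]
Poisson summation is legitimate because $w$ is smooth and compactly supported. The decay of $I_k$ in $k$ follows from Lemma~\ref{lem:frequency-tail}.

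For frequencies outside $\mathcal B$, I will apply Lemma~\ref{lem:frequency-tail} with $K=16$ and the given $\xi$ and $A$; the box $\mathcal B_0$ there is exactly $\mathcal B$. This gives a tail of size $(\det\mathsf X)T^{-16}\prod(1+T/X_i)$, which is smaller than both \eqref{eq:general-B-error} and the bound in (ii).

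For part (i), I handle each $k\in\mathcal B\cap\mathbb Z^m$ by applying Lemma~\ref{lem:uniform-stationary-phase} to the normalized phase $T^{-1}(\varphi(u)-(\mathsf Xk)\cdot u)$. Its hypotheses hold as follows:
\begin{itemize}
\item the Hessian is $T^{-1}\nabla^2\varphi$, so its inverse is $O_L(1)$ and its derivatives of order $2$ to $12$ are $O_L(1)$;
\item injectivity of $\nabla f$ gives at most one stationary point in $U$;
\item the margin $\operatorname{dist}(K,U^c)>0$ supplies the required fixed $c$.
\end{itemize}
The main term $\zeta(u_*)\ee(\cdots+\sigma/8)T^{-m/2}|\det T^{-1}\nabla^2\varphi(u_*)|^{-1/2}$, multiplied by $\det\mathsf X$, becomes exactly $w(x(k))\ee(f(x(k))-k\cdot x(k)+\sigma/8)|\det\nabla^2 f(x(k))|^{-1/2}$. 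This uses $\nabla^2\varphi=\mathsf X\nabla^2 f\mathsf X$, which gives $(\det\mathsf X)/\sqrt{|\det\mathsf X\nabla^2f\mathsf X|}=|\det\nabla^2f|^{-1/2}$. Each error is $(\det\mathsf X)T^{-m/2-1}$. The number of $k$ in $\mathcal B$ is $\ll_L\prod(1+T/X_i)$, which yields \eqref{eq:general-B-error}.

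The dual summand vanishes unless $x(k)\in\supp w$. It is smooth because $k\mapsto x(k)$ is the smooth inverse of $\nabla f$, which exists by the nondegenerate Hessian and the inverse function theorem. Hence the summand extends by zero, and frequencies in $\nabla f(\supp w)$ outside $\mathcal B$ cannot occur: there the gradient condition fails. The absolute bound \eqref{eq:general-B-bound} follows by bounding each dual term by $(\det\mathsf X)T^{-m/2}$ and counting.

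For part (ii), again each $k\in\mathcal B$ is treated separately. If $\varphi_k=\varphi-(\mathsf Xk)\cdot u$ has a stationary point $u_*\in 4Q_m$, I apply Corollary~\ref{cor:absolute-stationary-phase}, recentered at $u_*$. Its Hessian is $\nabla^2\varphi(u_*)$, which is close to $H_0$ by the relative $C^9$ hypothesis. This yields $|\det\nabla^2\varphi(u_*)|^{-1/2}\asymp|\det H_0|^{-1/2}$ and a relative perturbation at most $2\varepsilon_m$, below $c_m$ for small $\varepsilon_m$.

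If there is no stationary point in $4Q_m$, I use Lemma~\ref{lem:nonstationary-phase} on $(-4,4)^m$ with normalized phase $\varphi_k/T$ and $k=8$. The relative closeness gives $\|(\nabla^2\varphi/T)^{-1}\|\ll_L p^{-1}$, producing $(Tp^2)^{-8}$. Multiplying by $\det\mathsf X$ and counting the $k$ in $\mathcal B$ then gives \eqref{eq:general-B-nearconstant}.

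The main obstacle is the normalization in (ii). Lemma~\ref{lem:nonstationary-phase} requires derivatives of order $\le 9$ of $\varphi/T$ to be $O_L(1)$, and this has to be extracted from the relative condition $\|H_0^{-1}(\nabla^2\varphi-H_0)\|_{C^9}$ together with $\|H_0\|$. I would try rescaling by $\|H_0\|\ll_L T$: this is a valid upper bound because $\varphi''\ll T$, while the inverse bound $(Tp)^{-1}$ is used only for the lower side. The recentering of Corollary~\ref{cor:absolute-stationary-phase} at $u_*\in4Q_m$, with the amplitude still supported in $Q_m$, needs the same care and is handled by the same relative hypothesis on $[-4,4]^m$.
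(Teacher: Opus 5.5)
Your plan follows the paper's proof: Poisson summation, then Lemma~\ref{lem:frequency-tail} for $k\notin\mathcal B$, then Lemma~\ref{lem:uniform-stationary-phase} frequency by frequency in (i). In (ii) you split between Corollary~\ref{cor:absolute-stationary-phase} and Lemma~\ref{lem:nonstationary-phase} with eight integrations by parts. Your split according to whether there is a stationary point in $4Q_m$, rather than on $\supp\zeta$, works equally well, because the corollary allows $u_*\in4Q_m$.

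There are two small repairs.
\begin{itemize}
\item Lemma~\ref{lem:frequency-tail} with $K=16$ needs $\|\partial^\alpha\varphi\|\ll_LT$ up to order $17$, but only orders $2$ through $12$ are assumed. Take $K=8$ as the paper does. The resulting $T^{-8}$ tail is still absorbed: in (i) because $T^{-8}\le T^{-m/2-1}$, and in (ii) because $p\ll_L1$.
\item $I_k$ carries the factor $\ee(-k\cdot c)$. It is needed to turn $\varphi(u_*)-(\mathsf Xk)\cdot u_*$ into $f(x(k))-k\cdot x(k)$ in the main term.
\end{itemize}

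The obstacle you anticipate in (ii) does not arise. The common hypotheses, which remain in force there with $U=(-4,4)^m$, already give $\|\partial^r\varphi\|_{L^\infty((-4,4)^m)}\ll_LT$ for $2\le r\le12$.
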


\begin{proof}
For $k\in\mathbb Z^m$, a single change of variables gives
\[
 \begin{aligned}
 I_k&=\int_{\mathbb R^m}w(x)\ee\bigl(f(x)-k\cdot x\bigr)\,\dd x\\
 &=(\det\mathsf X)\ee(-k\cdot c)
   \int_U\zeta(u)
       \ee\bigl(\varphi(u)-(\mathsf Xk)\cdot u\bigr)\,\dd u.
 \end{aligned}
\]
Poisson summation expresses the sum in the proposition as
$\sum_{k\in\mathbb Z^m}I_k$.
The box $\mathcal B$ contains
$O_L(\prod_i(1+T/X_i))$ integer points.
Lemma~\ref{lem:frequency-tail}, with eight integrations by parts,
applies under the common hypotheses and gives
\[
 \sum_{k\in\mathbb Z^m\setminus\mathcal B}|I_k|
 \ll_L(\det\mathsf X)T^{-8}
             \prod_{i=1}^m\left(1+\frac{T}{X_i}\right).
\]

For \textup{(i)}, apply Lemma~\ref{lem:uniform-stationary-phase}
to the phase
$T^{-1}(\varphi(u)-(\mathsf Xk)\cdot u)$ for each
$k\in\mathcal B\cap\mathbb Z^m$.
The hypotheses are uniform in $k$, and injectivity gives at most
one stationary point in $U$.
The stationary point, when it exists, is
$u=\mathsf X^{-1}(x(k)-c)$, and
\[
 \nabla^2\varphi(u)=\mathsf X\nabla^2f(x(k))\mathsf X.
\]
Consequently the leading term, including $\ee(-k\cdot c)$, is
exactly the summand in \eqref{eq:general-B-formula}; the error per
frequency is $O_L((\det\mathsf X)T^{-m/2-1})$.
Every frequency with a stationary point on $\supp w$ lies in
$\mathcal B$ by its definition.
Summing these errors and the frequency tail proves
\eqref{eq:general-B-error}, since $T\ge1$ and $m\le3$.
The inverse function theorem and injectivity give a smooth inverse
on the open gradient image. The weighted dual summand is supported
on the image of the compact set $\supp w$, a compact subset of
this open image, so its zero extension is smooth.
Finally, the normalized inverse Hessian bound gives a leading term
of size $O_L((\det\mathsf X)T^{-m/2})$ per frequency.
Counting these frequencies and absorbing the error gives
\eqref{eq:general-B-bound}.

For \textup{(ii)}, small relative variation implies, throughout
$[-4,4]^m$,
\[
 \| (\nabla^2\varphi)^{-1}\|\ll_L(Tp)^{-1},\qquad
 |\det\nabla^2\varphi|\asymp |\det H_0|.
\]
If the phase $\varphi(u)-(\mathsf Xk)\cdot u$ has a stationary
point $u_*$ on $\supp\zeta$, its Hessian there is
$H_*=\nabla^2\varphi(u_*)$.
The identity
\[
 \begin{aligned}
 H_*^{-1}(\nabla^2\varphi(u)-H_*)
 &=[I+H_0^{-1}(H_*-H_0)]^{-1}\\
 &\quad\cdot\bigl[H_0^{-1}(\nabla^2\varphi(u)-H_0)
                    -H_0^{-1}(H_*-H_0)\bigr]
 \end{aligned}
\]
shows that its relative $C^9$ variation is $O_m(\varepsilon_m)$.
Taking $\varepsilon_m$ sufficiently small, Corollary~\ref{cor:absolute-stationary-phase}
therefore gives
\[
 |I_k|\ll_L(\det\mathsf X)|\det H_0|^{-1/2}.
\]

If there is no stationary point on $\supp\zeta$, the normalized
phase $T^{-1}(\varphi(u)-(\mathsf Xk)\cdot u)$ has inverse Hessian
$O_L(p^{-1})$ and derivatives of orders two through nine $O_L(1)$.
Lemma~\ref{lem:nonstationary-phase}, with eight integrations by
parts and the given $p$, therefore gives
\[
 |I_k|\ll_L(\det\mathsf X)(Tp^2)^{-8}.
\]
This argument requires no uniform lower bound for the gradient on
the amplitude support.
Summing the two estimates over $\mathcal B\cap\mathbb Z^m$ and
adding the frequency tail proves \eqref{eq:general-B-nearconstant};
the latter tail is absorbed because
$T^{-8}\ll_L(Tp^2)^{-8}$ when $p\ll_L1$.
\end{proof}

\bigskip

\noindent School of Mathematics and Statistics, Huazhong University of Science and Technology, Wuhan 430074, China.

\noindent\textit{Email address:} \texttt{tanbo@hust.edu.cn}

\bigskip

\noindent School of Mathematics and Statistics, Huazhong University of Science and Technology, Wuhan 430074, China.

\noindent\textit{Email address:} \texttt{zhangzhen\_@hust.edu.cn}
\bigskip

\noindent School of Mathematics and Statistics, Wuhan University of Technology, Wuhan 430070, China.

\noindent\textit{Email address:} \texttt{zhouql@whut.edu.cn}

\end{document}